\documentclass[letterpaper,english,cleveref,numberwithinsect]{lipics-v2021}
\usepackage[utf8]{inputenc}

\usepackage{csquotes}
\usepackage{cite}
\usepackage{todonotes}
\usepackage{mathtools}
\usepackage{tabulary}
\usepackage{multirow}
\usepackage{booktabs}
\usepackage{colortbl}

\crefformat{footnote}{#2\footnotemark[#1]#3}

\nolinenumbers

\graphicspath{{figures/}}

\newcommand{\A}{\raisebox{-1.5pt}{\includegraphics[scale=0.9,page=3]{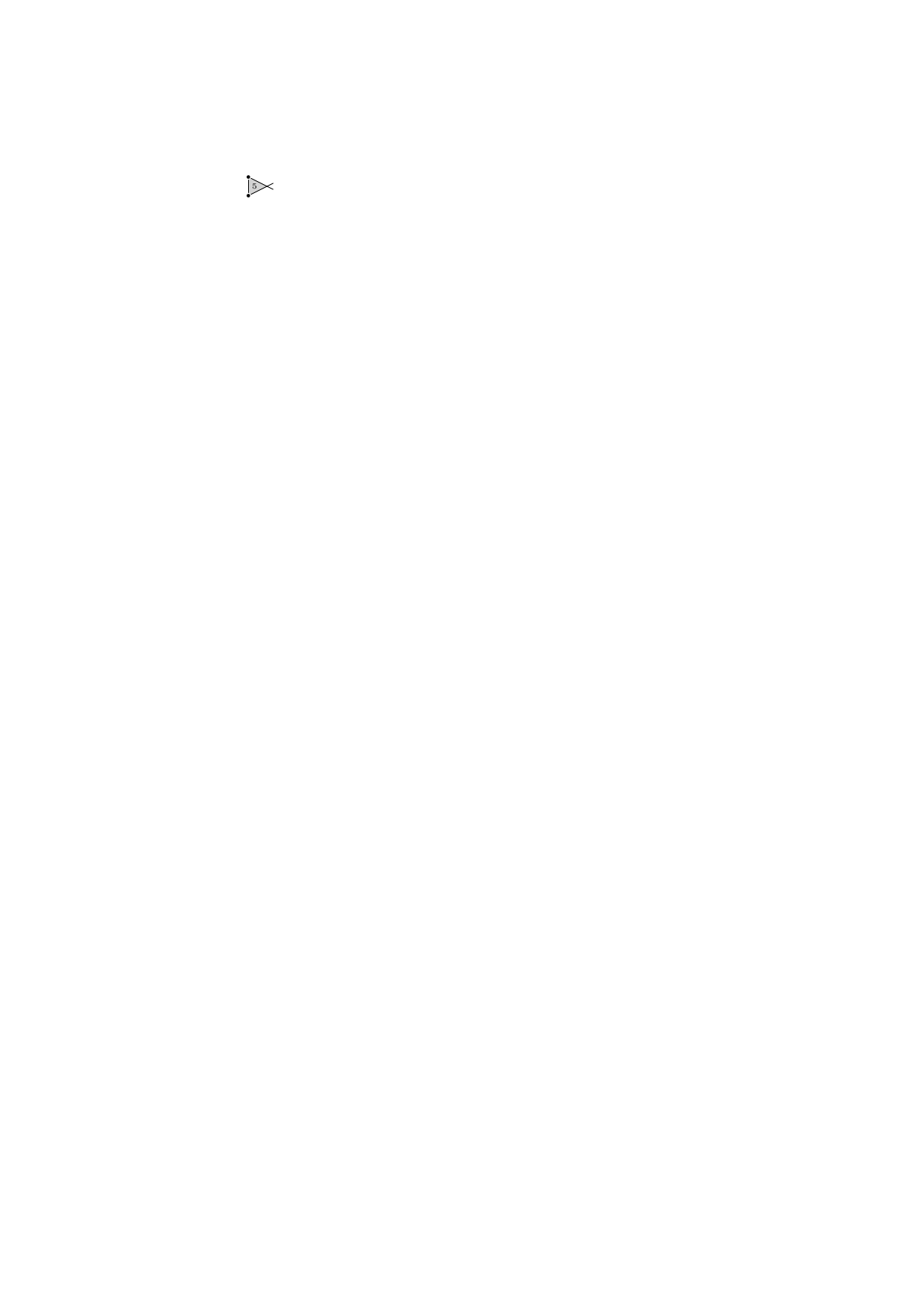}}}
\newcommand{\B}{\raisebox{-1.5pt}{\includegraphics[page=4]{figures/small-icons.pdf}}}
\newcommand{\C}{\raisebox{-1.5pt}{\includegraphics[page=5]{figures/small-icons.pdf}}}
\newcommand{\D}{\raisebox{-2pt}{\includegraphics[scale=0.9,page=2]{figures/small-icons.pdf}}}
\newcommand{\E}{\raisebox{-2pt}{\includegraphics[page=6]{figures/small-icons.pdf}}}
\newcommand{\F}{\raisebox{-2pt}{\includegraphics[page=7]{figures/small-icons.pdf}}}
\newcommand{\T}{\raisebox{-3pt}{\includegraphics[page=8]{figures/small-icons.pdf}}}

\newcommand{\calC}{\ensuremath{\mathcal{C}}}
\newcommand{\calX}{\ensuremath{\mathcal{X}}}
\newcommand{\calS}{\ensuremath{\mathcal{S}}}

\newcommand{\calB}{\ensuremath{\mathcal{B}}}
\newcommand{\calT}{\ensuremath{\mathcal{T}}}

 \newcommand{\Oh}{\mathcal{O}}

\title{The Density Formula:\texorpdfstring{\\
One Lemma to Bound Them All}{}}

\titlerunning{The Density Formula}

\author{Michael Kaufmann}{Universit\"at T\"ubingen}{michael.kaufmann@uni-tuebingen.de}{https://orcid.org/0000-0001-9186-3538}{}
\author{Boris Klemz}{Universit\"at W\"urzburg}{firstname +-+- dot +-+- lastname +-+- at +-+- uni +-+- minus +-+- wuerzburg +-+- dot +-+- de}{https://orcid.org/0000-0002-4532-3765}{}
\author{Kristin Knorr}{Freie Universit\"at Berlin}{knorrkri@inf.fu-berlin.de}{https://orcid.org/0000-0003-4239-424X}{}
\author{Meghana M.\ Reddy}{ETH Z\"urich}{meghana.mreddy@inf.ethz.ch}{https://orcid.org/0000-0001-9185-1246}{}
\author{Felix Schr\"oder}{Technische Universit\"at Berlin, Charles University Prague}{schroder@kam.mff.cuni.cz}{https://orcid.org/0000-0001-8563-3517}{supported by the Czech Science Foundation grant GA\v CR 23-04949X}
\author{Torsten Ueckerdt}{Karlsruhe Institute of Technology}{torsten.ueckerdt@kit.edu}{https://orcid.org/0000-0002-0645-9715}{supported by the Deutsche Forschungsgemeinschaft - 520723789}
\authorrunning{M. Kaufmann, B. Klemz, K. Knorr, M.\,M. Reddy, F. Schr\"oder, and T. Ueckerdt}

\Copyright{Michael Kaufmann, Boris Klemz, Kristin Knorr, Meghana M.\ Reddy, Felix Schr\"oder, and Torsten Ueckerdt}

\ccsdesc{Mathematics of computing~Graph theory}
\ccsdesc{Mathematics of computing~Graphs and surfaces}

\keywords{Beyond-planar, Density, fan-planar, right-angle crossing, quasiplanar}


\EventEditors{Stefan Felsner and Karsten Klein}
\EventNoEds{2}
\EventLongTitle{32nd International Symposium on Graph Drawing and Network Visualization (GD 2024)}
\EventShortTitle{GD 2024}
\EventAcronym{GD}
\EventYear{2024}
\EventDate{September 18--20, 2024}
\EventLocation{Vienna, Austria}
\EventLogo{}
\SeriesVolume{320}
\ArticleNo{5}

\begin{document}

\maketitle

\begin{abstract}
    We introduce the Density Formula for (topological) drawings of graphs in the plane or on the sphere, which relates the number of edges, vertices, crossings, and sizes of cells in the drawing.
    We demonstrate its capability by providing several applications:
    we prove tight upper bounds on the edge density of various beyond-planar graph classes, including so-called $k$-planar graphs with $k=1,2$, fan-crossing / fan-planar graphs, $k$-bend RAC-graphs with $k=0,1,2$, quasiplanar graphs, and $k^+$-real face graphs.
    In some cases ($1$-bend and $2$-bend RAC-graphs and fan-crossing / fan-planar graphs), we thereby obtain the first tight upper bounds on the edge density of the respective graph classes.
    In other cases, we give new streamlined and significantly shorter proofs for bounds that were already known in the literature.
    Thanks to the Density Formula, all of our proofs are mostly elementary counting and mostly circumvent the typical intricate case analysis found in earlier proofs.
    Further, in some cases (simple and non-homotopic quasiplanar graphs), our alternative proofs using the Density Formula lead to the first tight lower bound examples.
\end{abstract}

\newpage 

\section{Introduction}
\label{sec:introduction}

Topological Graph Theory is concerned with the analysis of graphs drawn in the plane $\mathbb{R}^2$ or the sphere $\mathbb S^2$ such that the drawing has a certain property often related to forbidden crossing configurations.
The most prominent example is the class of planar graphs, which admit drawings without any crossings.
Other well-studied examples include $k$-planar graphs where every edge can have up to $k$ crossings, RAC-graphs where edges are straight line segments and every crossing happens at a right angle, or quasiplanar graphs where no three edges are allowed to pairwise cross each other.
As all these include planar graphs as a special case, they are commonly known as \emph{beyond-planar} graph classes.
See~\cite{DLM19-survey} for a recent survey.

When studying a beyond-planar graph class $\mathcal{G}$,
one of the most natural and important questions is to determine how many edges a graph in $\mathcal{G}$ can have.
The \emph{edge density} of $\mathcal{G}$ is the function giving the maximum number of edges over all $n$-vertex graphs in $\mathcal{G}$.
For example, planar graphs with at least three vertices have edge density $3n-6$.
All the beyond-planar graph classes mentioned above have linear edge density, i.e., their edge density is in $\Oh(n)$.
Proofs of precise linear upper bounds for the edge density of a specific class $\mathcal{G}$ are often times involved and very tailored to the specific drawing style that defines $\mathcal{G}$.
In particular, getting a \emph{tight} bound (even only up to an additive constant) was achieved only in a couple of cases.
A particularly simple case is the class of planar graphs, whose edge density of $3n-6$ can be easily derived from Euler's Formula.
However, a comparable formula for general drawings (with crossings) that can be used to easily derive tight upper bounds for the edge density of beyond-planar graph classes was not known --- until now.

\subparagraph{Our Contribution.}
In this paper, we introduce a new tool, called the
\textbf{Density Formula} (\cref{lem:density-formula}), which can
be used to derive upper bounds on edge densities for many beyond-planar graph classes.
It is an equation that relates the number of edges, vertices,
crossings, and sizes\footnote{\label{footnote:cells}
Loosely speaking, a cell of
a drawing is a connected region of the plane (or sphere) after
removing the drawing; its size is the number of vertex and
edge segment occurrences along its boundary, see \cref{fig:example-cells} for examples.} of cells\cref{footnote:cells} in a connected drawing of
a graph
and is parameterized by a real-valued parameter~$t$.
Intuitively, the Density Formula allows us to obtain density
bounds by counting the cells of small size in a drawing, which is often times quite an elementary
task.
The parameter~$t$ is chosen in accordance with the desired density bound, e.g., when aiming for a bound of roughly $5n$, where $n$ is the number of vertices, we might set $t=5$, in which case the Density Formula states that the
number of edges in the drawing is
$5n-10- \sum_{c \in \calC} (\|c\|-5) - x$,
where $x$ is the number of
crossings, $\mathcal C$ is the set of cells, and $\|c\|$ denotes
the size of a cell~$c$. 
Thus, any upper bound on $-(\sum_{c \in \calC} (\|c\|-5) + x)$
yields an upper bound on the number of edges.
Since the quantity $(\|c\|-5)$ is non-negative for cells of
size at least $5$, such a bound can indeed be obtained by counting the
cells of small sizes (here, at most $4$) and cross-charging them with the
crossings.

We give the precise, more general, statement of the Density
Formula in \cref{sec:all-drawings}, where we also develop some
general tools that help with the required counting / charging
arguments.
Before that, in \cref{sec:prelim}, we formally define some basic
notions, such as (connected) drawings, cells, cell sizes, etc., and discuss
some further preliminaries.
We demonstrate the capabilities of the Density Formula by
providing several applications, which are discussed next.

\begin{table}[!htb]
    \centering
    \begin{tabulary}{\textwidth}{CCCCC}
        \toprule
        beyond-planar & \multirow{2}{*}{variant} & \multicolumn{2}{c}{\multirow{2}{*}{upper bound}} & \multirow{2}{*}{lower bound} \\
        graph class & & & & \\
        \midrule
        \midrule
        \multirow{2}{*}{$0$-bend RAC} & \multirow{2}{*}{no constraint} & $4n-10$ & $4n-8$ & $4n-10$ \\
        & & \cite{DEL11-RAC} & \cref{thm:0-bend-RAC} & \cite{DEL11-RAC} \\
        \midrule
        \multirow{2}{*}{$1$-bend RAC} & \multirow{2}{*}{non-homotopic} & $5.4n-10.8$ & \cellcolor{lipicsYellow} $5n-10$ & $5n-10$ \\
        & & \cite{AnBeFoKa20} & \cellcolor{lipicsYellow} \cref{thm:1/2-bend-RAC} & \cite{AnBeFoKa20} \\
        \midrule
        \multirow{2}{*}{$2$-bend RAC} & \multirow{2}{*}{non-homotopic} & $20n-24$ & \cellcolor{lipicsYellow} $10n-19$ & \cellcolor{lipicsYellow} $10n-54$ \\
        & & \cite{CToth23} & \cellcolor{lipicsYellow} \cref{thm:1/2-bend-RAC} & \cellcolor{lipicsYellow} \cref{thm:2-bend-RAC-LB} \\
        \midrule
        fan-crossing / & \multirow{2}{*}{simple} & \textcolor{red!40}{$5n-10$} & \cellcolor{lipicsYellow} $5n-10$ & $5n-10$ \\
        fan-planar & & \textcolor{red!40}{\cite{KU14-arxiv,KaufUeck22,Brandenburg20,CFKPS23}} & \cellcolor{lipicsYellow} \cref{thm:fan-crossing} & \cite{KU14-arxiv,KaufUeck22} \\
        \midrule
        fan-cr. / fan-pl. & \multirow{2}{*}{simple} & \textcolor{red!40}{$4n-12$} & \cellcolor{lipicsYellow} $4n-10$ & $4n-16$ \\
        $+$ bipartite & & \textcolor{red!40}{\cite{ABKPU18,CFKPS23}} & \cellcolor{lipicsYellow} \cref{thm:bip-fan-crossing} & \cite{ABKPU18} \\
        \midrule
        \multirow{4}{*}{quasiplanar} & \multirow{2}{*}{simple} & $6.5n-20$ & $6.5n-20$ & \cellcolor{lipicsYellow} $6.5n-20$ \\
        & & \cite{AckeTard07} & \cref{thm:simple-quasiplanar} & \cellcolor{lipicsYellow} \cref{thm:LB-simple-quasiplanar} \\
        \cmidrule{2-5}
        & \multirow{2}{*}{non-homotopic} & $8n-20$ & $8n-20$ & \cellcolor{lipicsYellow} $8n-20$ \\
        & & \cite{AckeTard07} & \cref{thm:general-quasiplanar} & \cellcolor{lipicsYellow} \cref{thm:LB-general-quasiplanar} \\
        \midrule
        \multirow{2}{*}{$1^+$-real face} & \multirow{2}{*}{non-homotopic} & $5n-10$ & $5n-10$ & $5n-10$ \\
        & & \cite{DBLP:conf/wg/BinucciBDHKLMT23} &\cref{thm:1-vertex-per-cell}& \cite{DBLP:conf/wg/BinucciBDHKLMT23} \\
        \midrule
        \multirow{2}{*}{$2^+$-real face} & \multirow{2}{*}{non-homotopic} & $4n-8$ & $4n-8$ & $4n-8$ \\
        & & \cite{DBLP:conf/wg/BinucciBDHKLMT23} & \cref{thm:2-vertex-per-cell} & \cite{DBLP:conf/wg/BinucciBDHKLMT23} \\
        \midrule
        $k^+$-real face & \multirow{2}{*}{no constraint} & $\frac{k}{k-2}(n-2)$ & $\frac{k}{k-2}(n-2)$ & $\frac{k}{k-2}(n-2)$ \\
        $k \geq 3$ & & \cite{DBLP:conf/wg/BinucciBDHKLMT23} & \cref{thm:k-vertex-per-cell} & \cite{DBLP:conf/wg/BinucciBDHKLMT23} \\
        \midrule
        \multirow{2}{*}{$1$-planar} & \multirow{2}{*}{non-homotopic} & $4n-8$ & $4n-8$ & $4n-8$ \\
        & & \cite{PT97-k-planar} & \cref{thm:1-planar} & \cite{PT97-k-planar} \\
        \midrule
        \multirow{2}{*}{$2$-planar} & \multirow{2}{*}{non-homotopic} & $5n-10$ & $5n-10$ & $5n-10$ \\
        & & \cite{PT97-k-planar} & \cref{thm:2-planar}& \cite{PT97-k-planar} \\
        \midrule
        \midrule
        & & previous work & Density Formula & \\
        \bottomrule
    \end{tabulary}    
    \caption{
        Overview of edge density bounds, i.e., the maximum number of edges in connected $n$-vertex ($n$ large enough) graphs in that graph class.
        In particular, the third column lists previous work on upper bounds and the fourth column lists the upper bounds we obtain using the Density Formula.
        \colorbox{lipicsYellow}{Previously unknown bounds} are highlighted with boxes.
        Results from the literature that are written in light red rely on \textcolor{red!40}{incomplete proofs} as they use an incorrect statement from \cite{KU14-arxiv,KaufUeck22}, as we discuss in more detail in \cref{sec:fan-planar-gap}.
    }
    \label{tab:overview}
\end{table}

\subsubsection*{Applications}
\subparagraph{$k$-Bend RAC-Drawings.}
For an integer $k \geq 0$, a drawing $\Gamma$ in the plane $\mathbb{R}^2$ of some graph~$G$ is \emph{$k$-bend RAC}, which stands for right-angle crossing, if every edge of $\Gamma$ is a polyline with at most $k$ bends and every crossing in $\Gamma$ happens at a right angle, and in this case $G$ is called a \emph{$k$-bend RAC-graph}.
The $k$-bend RAC-graphs were introduced by Didimo, Eades, and Liotta~\cite{DEL11-RAC}, who prove that $n$-vertex $0$-bend RAC-graphs have at most $4n-10$ edges (and this is tight), while every graph is a $3$-bend RAC-graph.
The best known upper bound for simple\footnote{\label{footnote:simple}Loosely speaking, in a simple drawing,
every pair of edges intersects in at most one point, thereby forbidding digons formed by segments of two edges. In non-homotopic drawings, such digons are allowed as long as both regions bounded by a digon contain at least one vertex or crossing.
} $1$-bend RAC-drawings is $5.4n - 10.8$~\cite{AnBeFoKa20}, while the lower bound is $5n-10$~\cite{AnBeFoKa20}.
By means of the Density Formula, we give an improved upper bound of $5n-10$ for the connected case, which is best-possible.
Very recently, T\'oth~\cite{CToth23} established an upper bound of $20n - 24$ for simple graphs admitting $2$-bend RAC-drawings, thereby improving the long standing previous best upper bound of $74.2n$ \cite{DBLP:journals/comgeo/ArikushiFKMT12}.
Using the Density Formula, we derive a significantly improved upper bound of $10n-19$ for simple drawings.
We also show that this bound is tight up to an additive constant
by constructing an infinite family of simple  $2$-bend RAC-drawings with $10n-54$ edges.
(A similiar construction of $2$-bend RAC-drawings with $10n-46$ edges was presented by Angelini et al.~\cite{ABKKPU23}, but their drawings are not simple.)
Both of our upper bound results in fact apply even to the non-homotopic\cref{footnote:simple} case.
We prove these results in \cref{sec:k-bend-RAC-graphs}.
For completeness, in \cref{sec:k-bend-RAC-graphs-appendix} of the appendix, we also apply the Density Formula to reprove the known upper bound for 0-bend RAC-graphs.

\subparagraph{Fan-Crossing Drawings.}
A drawing $\Gamma$ on the sphere $\mathbb{S}^2$ of some graph $G$ is \emph{fan-crossing} if for every edge $e$ of $G$, the edges crossing $e$ in $\Gamma$ form a star in $G$, and in this case $G$ is called a \emph{fan-crossing graph}.
A simple drawing is fan-crossing if and only if there is no configuration~I and no triangle-crossing, as shown in \cref{fig:fan-planar-mess}.
Fan-crossing drawings generalize fan-planar drawings; but the story about fan-planar graphs is problematic and tricky.
In a preprint from 2014, Kaufmann and Ueckerdt~\cite{KU14-arxiv} introduced fan-planar drawings as the simple drawings in~$\mathbb{R}^2$ without configuration~I and~II, as shown in \cref{fig:fan-planar-mess}.
These are today known as \emph{weakly fan-planar} and they show that $n$-vertex weakly fan-planar graphs have at most $5n-10$ edges~\cite{KU14-arxiv}.
However, recently, a first flaw in this proof was discovered~\cite{KKRS23}.
It was fixed in the journal version~\cite{KaufUeck22} of~\cite{KU14-arxiv} from 2022 by additionally forbidding configuration~III, as shown in \cref{fig:fan-planar-mess}.
These, more restricted, graphs are today known as \emph{strongly fan-planar} graphs, 
and it is known that this indeed is a different graph class~\cite{CFKPS23}.
However, for each $n$-vertex weakly fan-planar graph, there is a strongly fan-planar graph on the same number of vertices and edges~\cite{CFKPS23}.
So any density result could be lifted.
As every triangle-crossing contains configuration~II, weakly fan-planar graphs are also fan-crossing, while again these are indeed different graph classes~\cite{Brandenburg20}.
However again, for each $n$-vertex fan-crossing graph, there is a weakly fan-planar graph on the same number of vertices and edges~\cite{Brandenburg20}, and thus the density can be lifted to fan-crossing graphs.

\begin{figure}[htb]
    \centering
    \includegraphics{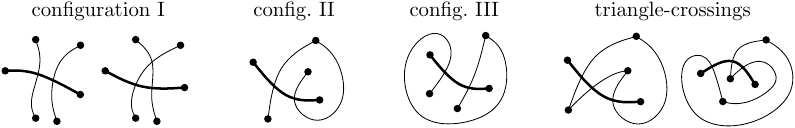}
    \caption{
        Simple fan-planar drawings have neither configuration~I, nor II, nor III.
        Simple fan-crossing drawings have no configuration~I and no triangle-crossings.
    }
    \label{fig:fan-planar-mess}
\end{figure}

In \cref{sec:fan-crossing}, we prove an upper bound of $5n-10$ for simple $n$-vertex connected fan-crossing drawings by applying the Density Formula.
We also briefly describe in \cref{sec:fan-planar-gap} another issue in the (updated) proof from~\cite{KaufUeck22} by providing a counterexample to one of their crucial statements.
As all previous density results rest on~\cite{KaufUeck22}, our result on fan-crossing drawings is the first complete proof for fan-crossing, weakly fan-planar, and strongly fan-planar drawings.
Moreover, our proof is significantly simpler than the strategy used in~\cite{KaufUeck22}.
In \cref{sec:bipartite-fan-crossing} of the appendix, we discuss the special case of fan-crossing drawings of bipartite graphs, for which we obtain similar results.

We remark that, in a very recent preprint, Ackerman and Keszegh~\cite{AckKes23} also (independently of us) propose a new alternative proof for the $5n-10$ upper bound for fan-crossing graphs.
Moreover, Brandenburg~\cite{Brandenburg20} also considers \emph{adjacency-crossing} graphs by just forbidding configuration~I, but allowing triangle-crossings.
He shows however that this class coincides with fan-crossing graphs, and hence our $5n-10$ upper bound applies.

\subparagraph{Quasiplanar Drawings.}
A drawing $\Gamma$ on the sphere $\mathbb{S}^2$ of some graph $G$ is \emph{quasiplanar} if no three edges of $G$ pairwise cross in $\Gamma$ and in this case $G$ is called a \emph{quasiplanar graph}.
Quasiplanar graphs were introduced by Pach~\cite{Pac91}.
It is known that simple $n$-vertex quasiplanar drawings have at most $6.5n - 20$ edges~\cite{AckeTard07} and non-homotopic connected $n$-vertex quasiplanar drawings have at most $8n-20$ edges~\cite{AckeTard07}.
However, the best known lower bounds~\cite{AckeTard07} are just $6.5n - 29$ and $7n - 29$, respectively.
In \cref{sec:quasiplanar-appendix} and \cref{sec:simple-quasiplanar-appendix} of the appendix, we reprove the known upper bounds using the Density Formula.
In \cref{sec:quasiplanar}, inspired by insights gained in our upper bound proofs, we provide families of drawings showing that the previous upper bounds are actually best possible.

\subparagraph{Further applications.}
In \cref{sec:k-real-face-graphs} and \cref{sec:k-planar} of the appendix, we also use the Density Formula to reprove (and slightly generalize) the known upper bounds for so-called $k^+$-real face graphs and 1-planar and 2-planar graphs respectively. 
All results are summarized in \cref{tab:overview}.

Some previously known density proofs already contain ideas that are similar to (parts of) our strategy, and we discuss this further in \cref{sec:conclusions}. 
But with the Density Formula, whose proof is merely a straight-forward application of Euler's Formula, we have a unified and simple approach that somewhat unveiled the essential tasks in this field of research.
We believe it will serve as a useful tool for proving density bounds in the future.
For example, very recently, the Density Formula was already applied by Bekos et al.~\cite{bekos24-k-planar} to give bounds on the density of $k$-planar graphs without short cycles.
Moreover, given that the Density Formula behaves symmetrically when it comes to the number of edges and the number of crossings, it seems plausible that it can also be used to derive bounds on crossing numbers of beyond-planar graph classes.


\section{Terminology, Conventions, and Notation}
\label{sec:prelim}

All graphs in this paper are finite and have no loops, but possibly parallel edges.
We consider classic node-link drawings of graphs.
More precisely, in a \emph{drawing} $\Gamma$ of a graph $G = (V,E)$ (in the plane $\mathbb R^2$ or on the sphere $\mathbb S^2$) the vertices are pairwise distinct points and each edge is a simple\footnote{\label{footnote:simple-Jordan}with no self-intersection} Jordan curve connecting the two 
vertices.
In particular, no edge crosses itself.
In order to avoid special treatment of the unbounded region, we mostly consider drawings on the sphere $\mathbb{S}^2$.
In one case (RAC-drawings), however, we consider drawings in the plane $\mathbb{R}^2$, as the drawing style involves straight lines and angles.
In any case, we require throughout the usual assumptions of no edge passing through a vertex, having only proper crossings and no touchings, only finitely many crossings, and no three edges crossing in the same point.

So-called \emph{simple} drawings are a particularly important and well-studied type of drawing.
In such a drawing, any two edges have at most one point in common.
In particular, simple drawings contain
no two edges crossing more than once,
no crossing adjacent edges, and
no parallel edges.
However, there is increasing interest in generalizations of simple drawings that allow these types of configurations, as long as the involved edge pairs are not just drawn in basically the same way within a narrow corridor.
This notion is formalized as follows.
A \emph{lens} in a drawing $\Gamma$ is a region whose boundary is described by a simple\cref{footnote:simple-Jordan} closed Jordan curve $\gamma$ such that $\gamma$ is comprised of exactly two contiguous parts, each being formed by (a part of) one edge.
So the curve $\gamma$ consists of either two non-crossing parallel edges, or parts of two crossing adjacent edges, or parts of two edges crossing more than once;
see \cref{fig:bad-lenses} for illustrations.
Be aware that for drawings in $\mathbb{R}^2$, a lens might be an unbounded region.
Now let us call a drawing $\Gamma$ \emph{non-homotopic}\footnote{Usually, non-homotopic drawings require a vertex in each lens, but we only need our weaker requirement.} if every lens contains a vertex or a crossing in its interior.
This is indeed a generalization of simple drawings, as these cannot contain any lens.

\begin{figure}[htb]
    \centering
    \includegraphics{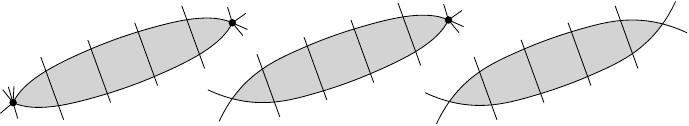}
    \caption{Lenses with no vertex and no crossing in their interior. Such configurations are forbidden in non-homotopic drawings.}
    \label{fig:bad-lenses}
\end{figure}

Beyond-planar graph classes are implicitly defined as all graphs $G$ that admit a drawing~$\Gamma$ with specific properties, such as all edges of $G$ having at most one crossing in $\Gamma$.
These for example are called \emph{$1$-planar drawings}\footnote{In literature, planar drawings are also referred to as \emph{plane drawings}, and a planar graph with a fixed plane drawing is called a \emph{plane graph}.
And there is a similar distinction for each beyond-planar graph class (e.g., $1$-planar vs.~$1$-plane graphs).
But for simplicity, we treat \emph{planar} and \emph{plane} as equivalent here.} 
and the corresponding graphs are called \emph{$1$-planar graphs}.
We extend this policy to the properties `simple' and `non-homotopic' in the same way, e.g., a non-homotopic $1$-planar graph is a graph that admits a non-homotopic $1$-planar drawing.
Observe that this aligns with a simple graph being a graph with no loops (which we rule out entirely) and no parallel edges.

\smallskip

Fix a drawing $\Gamma$ of some graph $G=(V,E)$.
Setting up some notation, let $E_x \subseteq E$ be the set of all \emph{crossed} edges of $G$, i.e., with at least one crossing in $\Gamma$, and $E_p = E\setminus E_x$ be the set of all \emph{planar} edges (without crossings).
Further, let $\calX$ denote the set of all crossings in~$\Gamma$.
Each edge $e$ is split into one or more \emph{edge-segments} by the crossings along $e$.
That is, an edge with exactly $k$ crossings, $k \geq 0$, is split into exactly $k+1$ edge-segments.
An \emph{outer} edge-segment of $\Gamma$ is incident to some vertex, while an \emph{inner} edge-segment is not.
The set of all edge-segments of $\Gamma$ is denoted by $\calS$ and the set of all inner edge-segments by $\calS_{\rm in}$.

\begin{observation}\label{obs:inner-segments}
    Let $\Gamma$ be any drawing of some graph $G = (V,E)$.
    Then
    \[
        |\calS| = 2|\calX| + |E| \quad \text{and} \quad |\calS_{\rm in}| = |\calS| - 2 |E_x| - |E_p| = 2|\calX| - |E_x|.
    \]
\end{observation}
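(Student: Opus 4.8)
The plan is to prove both identities by double counting, using throughout the standing genericity assumptions from \cref{sec:prelim}: no edge passes through a vertex, no three edges cross in a common point, and no edge crosses itself.

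For the first identity, I associate with each edge $e \in E$ its number of crossings $c(e)$ in $\Gamma$, so that by definition $e$ is subdivided into exactly $c(e)+1$ edge-segments. Summing over all edges yields $|\calS| = \sum_{e \in E} (c(e)+1) = |E| + \sum_{e \in E} c(e)$. It then remains to observe that $\sum_{e \in E} c(e) = 2|\calX|$: each element of $\calX$ is a proper crossing of exactly two distinct edges of $G$ (distinctness because no edge self-crosses), and conversely each crossing along an edge $e$ is an element of $\calX$; hence in the sum $\sum_{e \in E} c(e)$ every crossing is counted exactly twice. Combining the two equalities gives $|\calS| = 2|\calX| + |E|$.

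For the second identity, I classify the segments of a single edge $e$ according to whether $e$ is crossed. If $e \in E_p$, then $e$ is a single segment, incident to both endpoints of $e$, hence outer, and contributes nothing to $\calS_{\rm in}$. If $e \in E_x$, I order the $c(e)+1 \ge 2$ segments of $e$ consecutively along $e$ from one endpoint to the other; the first and last (distinct) segments are each incident to an endpoint of $e$ and are therefore outer, whereas each of the remaining $c(e)-1$ segments runs from one crossing to the next and, since no edge passes through a vertex, is incident to no vertex, hence inner. So $e$ contributes precisely $c(e)-1$ inner segments, and summing over $E_x$ gives
\[
  |\calS_{\rm in}| = \sum_{e \in E_x} \bigl(c(e)-1\bigr) = \Bigl(\sum_{e \in E_x} c(e)\Bigr) - |E_x| = \Bigl(\sum_{e \in E} c(e)\Bigr) - |E_x| = 2|\calX| - |E_x|,
\]
where the third equality uses $c(e)=0$ for all $e \in E_p$, and the last one reuses the count established in the first part.

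Since the whole argument is elementary bookkeeping, I do not expect a genuine obstacle; the only points needing care are the genericity assumptions, which are exactly what ensures that each crossing is shared by precisely two edges and that an inner segment — delimited by two crossings — meets no vertex.
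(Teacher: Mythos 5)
Your proof is correct and takes essentially the same elementary double-counting approach as the paper; the only (immaterial) difference is that for the second identity you count the inner segments directly edge by edge as $\sum_{e\in E_x}(c(e)-1)$, whereas the paper counts the outer segments globally as $\sum_{v\in V}\deg(v)-|E_p|$ and subtracts from $|\calS|$. Both routes rest on the same facts (each crossing lies on exactly two edges, no edge passes through a vertex), so nothing further is needed.
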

%

The \emph{planarization} $\Lambda$ of the drawing $\Gamma$ is the planar drawing obtained from $\Gamma$ by replacing each crossing by a new vertex and replacing each edge by its edge-segments.
We call the drawing $\Gamma$ \emph{connected} if the graph underlying its planarization $\Lambda$ is connected.
Let us remark that most density results in this paper assume for brevity the considered graphs to be connected, while our proofs actually only require the respective drawings to be connected.

The connected components of $\mathbb{S}^2$ or $\mathbb{R}^2$ after removing all edges and vertices in $\Gamma$ are called the \emph{cells} of $\Gamma$.
The set of all cells is denoted by $\calC$. 
The \emph{boundary} $\partial c$ of each cell $c$ consists of a cyclic sequence alternating between $V \cup \calX$ and $\calS$, i.e., vertices/crossings and edge-segments of $\Gamma$.
If $\Gamma$ is not connected, $\partial c$ might consist of multiple such sequences.
Be aware that an edge-segment might appear twice on $\partial c$, a crossing might appear up to four times on $\partial c$, and a vertex $v$ may appear up to $\deg(v)$ times on $\partial c$.
Each appearance of an edge-segment / vertex / crossing on $\partial c$ is called an edge-segment-incidence / vertex-incidence / crossing-incidence of $c$.
The total number of edge-segment-incidences and vertex-incidences of $c$ is called the \emph{size} of $c$ and denoted by $\|c\|$.
Note that $\|c\|$ does not take the number of crossings on $\partial c$ into account, while edge-segments and vertices are counted with multiplicities; see \cref{fig:example-cells} for several examples. 
For an integer $i$, let $\calC_i$ and $\calC_{\geq i}$ denote the set of all cells of size exactly $i$ and the set of all cells of size at least $i$, respectively.

\begin{figure}[htb]
    \centering
    \includegraphics{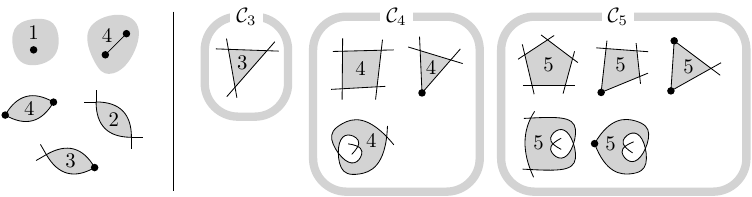}
    \caption{
        Left: Cells (and their sizes) that do not appear in simple or non-homotopic drawings on at least three vertices.
        Right: All types of cells $c$ of size $\|c\| \leq 5$ in a non-homotopic connected drawing on at least three vertices (cf.\ \cref{obs:types-of-cells}).
        The bottom row shows degenerate \B-cells, \F-cells, and \E-cells.
    }
    \label{fig:example-cells}
\end{figure}

\cref{fig:example-cells}(right) shows all possible types of cells of size at most~$5$ that can occur in connected non-homotopic drawings on $\mathbb{S}^2$ with at least three vertices; 
the bottom row shows the \emph{degenerate} cells, i.e., those cells $c$ with a crossing or vertex appearing repeated in $\partial c$. 
When proving edge density bounds by means of the Density Formula, our main task will be to count these ``small cells''; we denote the different types for convenience with little pictograms, such as \A-cells, \B-cells, and \D-cells.
More precisely, each pictogram describes a type of cell~$c$ in terms of the sequence of types of incidences found along its connected boundary~$\partial c$.
E.g., the boundary of a \A-cell consists of a vertex-incidence, an edge-segment-incidence, a crossing-incidence, an edge-segment-incidence, a crossing-incidence, and an edge-segment-incidence, in this order.

\begin{observation}\label{obs:types-of-cells}
    Let $\Gamma$ be any non-homotopic connected drawing of some graph $G$ with at least three vertices.
    Then
    \begin{itemize}
        \item $\calC_3$ is the set of all \C-cells,
        \item $\calC_4$ is the set of all \A-cells and \B-cells, and
        \item $\calC_5$ is the set of all \D-cells, \E-cells, and \F-cells.\qed
    \end{itemize}
\end{observation}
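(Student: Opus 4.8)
The plan is a direct case analysis on the cyclic structure of the boundary $\partial c$ of a cell $c$ with $\|c\| \in \{3,4,5\}$. Since $\Gamma$ is connected, $\partial c$ is a single cyclic sequence that strictly alternates between \emph{corners} (vertex- or crossing-incidences) and edge-segment-incidences; writing $a$ and $b$ for the numbers of vertex- and crossing-incidences of $c$, it then has $a+b$ corners and exactly $a+b$ edge-segment-incidences, so $\|c\| = 2a+b$. The nonnegative solutions of $2a+b=k$ are $(a,b)\in\{(0,3),(1,1)\}$ for $k=3$, $(a,b)\in\{(0,4),(1,2),(2,0)\}$ for $k=4$, and $(a,b)\in\{(0,5),(1,3),(2,1)\}$ for $k=5$, so it suffices to identify, for each such $(a,b)$, all cells realizing it.

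Here I would invoke two elementary structural facts. \emph{(i)} No edge is a loop, hence the two endpoints of an edge-segment are distinct points, hence two corners flanking an edge-segment-incidence in $\partial c$ are distinct; so a cell with at most three corner-incidences is non-degenerate, and with four or five corner-incidences only non-adjacent corners can coincide. \emph{(ii)} If a crossing $x$ appears on $\partial c$, the two edge-segment-incidences flanking this appearance lie on \emph{distinct} edges: around $x$ the four incident edge-segments alternate between the two edges crossing at $x$, and $c$ occupies exactly one of the four wedges. From \emph{(i)} and \emph{(ii)} the admissible boundary pattern for each surviving $(a,b)$ is essentially forced, and one matches it against \cref{fig:example-cells}(right): for $(0,3)$ all three edge-segments lie on three distinct edges and $c$ is a \C-cell; $(1,2)$ likewise gives a \A-cell; $(2,1)$ gives a \D-cell (its three edge-segments are automatically on three distinct edges, one of them an uncrossed edge between the two vertices); $(0,4)$ gives a \B-cell; and $(0,5)$ and $(1,3)$ give \F- and \E-cells — in these last three cases non-adjacent corners may coincide, producing exactly the degenerate variants in the bottom row of \cref{fig:example-cells}.

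It remains to exclude $(a,b)=(1,1)$ and $(a,b)=(2,0)$, i.e.\ the cells with only two edge-segment-incidences, which will complete the three claimed equalities. For these I would show that $\partial c$ is the boundary of a lens with empty interior, contradicting non-homotopicity because a cell has no vertex or crossing in its interior. In the case $(1,1)$, facts \emph{(i)} and \emph{(ii)} give $\partial c = v, s_1, x, s_2$ where $s_1, s_2$ lie on two distinct edges $e_1 \ne e_2$ that meet at the vertex $v$ and cross at $x$; since $s_i$ is the edge-segment of $e_i$ incident to $v$, it has no crossing in its interior, so $s_1 \cup s_2$ is a simple closed Jordan curve consisting of one arc of $e_1$ and one arc of $e_2$ — a lens boundary. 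In the case $(2,0)$, $\partial c = v_1, s_1, v_2, s_2$ with $s_1, s_2$ uncrossed edges between $v_1$ and $v_2$: if $s_1 = s_2$, then $\partial c$ traverses this edge twice, forcing $\deg v_1 = \deg v_2 = 1$, so by connectedness $\Gamma$ is a single edge, contradicting $|V| \ge 3$; otherwise $s_1 \ne s_2$ are parallel edges and $s_1 \cup s_2$ is again a lens boundary.

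I expect the delicate part to be the middle step: confirming that facts \emph{(i)} and \emph{(ii)} leave exactly the boundary patterns of \cref{fig:example-cells}(right) for the admissible $(a,b)$, with no degenerate variant overlooked and no further realizable pattern beyond the two explicitly excluded ones. The lens argument — and with it the hypotheses ``non-homotopic'' and ``$|V| \ge 3$ / connected'' — is used only for $(1,1)$ and $(2,0)$; for the larger patterns it is fact \emph{(ii)} that rules out any forbidden, lens-bounding cell.
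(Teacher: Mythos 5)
Your proposal is correct. The paper offers no proof at all for this observation (it is stated with a \qed and justified only by \cref{fig:example-cells}), and your enumeration via $\|c\|=2a+b$, together with the two structural facts and the lens argument excluding the $(a,b)=(1,1)$ and $(2,0)$ patterns, is precisely the case analysis the paper leaves implicit; it correctly accounts for the degenerate variants arising only when non-adjacent corners of a $4$- or $5$-corner cell coincide.
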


\section{The Density Formula}
\label{sec:all-drawings}

In this section, we first state and prove the Density Formula, then derive some immediate consequences, and finally develop some general tools that are useful for its application.

\begin{lemma}[Density Formula]\label{lem:density-formula}
    Let $t$ be a real number.
    Let $\Gamma$ be a connected drawing of a graph $G = (V,E)$ with at least one edge.
    Then
    \begin{equation*}
        |E| = t(|V|-2)- \sum_{c \in \calC} \left(\frac{t-1}{4}\|c\|-t\right) - |\calX|.
    \end{equation*}
\end{lemma}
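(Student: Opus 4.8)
The plan is to apply Euler's Formula to the planarization $\Lambda$ of $\Gamma$ and then substitute two combinatorial identities: one counting the cells, and one counting the total size $\sum_{c\in\calC}\|c\|$. Recall that $\Lambda$ is a plane drawing whose vertex set is $V\cup\calX$, whose edges are the edge-segments $\calS$, and whose faces are exactly the cells of $\Gamma$; moreover $\Lambda$ is connected (since $\Gamma$ is) and has at least one edge. Hence Euler's Formula gives $|V\cup\calX|-|\calS|+|\calC|=2$, and since the sets $V$ and $\calX$ are disjoint, using $|\calS|=2|\calX|+|E|$ from \cref{obs:inner-segments} this rearranges to
\[
    |\calC| = 2 - |V| + |E| + |\calX|.
\]

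Next I would double-count incidences. Summing $\|c\|$ over all cells counts every edge-segment-incidence and every vertex-incidence of $\Gamma$ exactly once. In a connected plane graph, each edge contributes exactly two face-incidences (one per side, possibly to the same face), so the total number of edge-segment-incidences equals $2|\calS| = 4|\calX| + 2|E|$; and each vertex $v\in V$ contributes exactly $\deg(v)$ vertex-incidences (the angular sectors around $v$), so the total number of vertex-incidences equals $\sum_{v\in V}\deg(v) = 2|E|$ since $G$ has no loops. (Crossings are not counted toward $\|c\|$, so they play no role here.) Therefore
\[
    \sum_{c\in\calC}\|c\| = 4|\calX| + 4|E| = 4\bigl(|E|+|\calX|\bigr).
\]

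Finally, substitute both identities into the right-hand side of the claimed equation. Writing $\sum_{c\in\calC}\bigl(\tfrac{t-1}{4}\|c\|-t\bigr) = \tfrac{t-1}{4}\sum_{c\in\calC}\|c\| - t|\calC|$ and plugging in turns the right-hand side into
\[
    t(|V|-2) - (t-1)\bigl(|E|+|\calX|\bigr) + t\bigl(2-|V|+|E|+|\calX|\bigr) - |\calX|,
\]
and since $-(t-1)+t = 1$ this collapses to $\bigl(|E|+|\calX|\bigr) - |\calX| = |E|$, as desired. I do not anticipate a real obstacle; the only points that warrant a little care are (i) confirming that the faces of $\Lambda$ are precisely the cells of $\Gamma$, so that Euler's Formula applies verbatim, and (ii) checking that the two-sided edge-incidence count and the angular-sector vertex-incidence count remain valid when $\Gamma$ is merely connected rather than $2$-connected — in that case a cell boundary may traverse an edge-segment or a vertex repeatedly, but the counts above are exactly the ones that already account for these multiplicities.
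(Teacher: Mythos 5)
Your proof is correct and rests on the same two ingredients as the paper's: Euler's Formula applied to the planarization $\Lambda$, together with the double count $\sum_{c\in\calC}\|c\| = 2|\calS| + \sum_{v\in V}\deg(v) = 4(|E|+|\calX|)$. Your organization is in fact slightly cleaner — it treats all $t$ (including $t=1$) uniformly and avoids the paper's case split and auxiliary substitution $r=\tfrac{4t}{t-1}$ — and the only point worth stating explicitly is that $\sum_{v}\deg(v)$ correctly counts the vertex-incidences because connectivity together with $|E|\geq 1$ rules out isolated vertices, exactly as the paper notes.
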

\begin{proof}
    First recall that, by \cref{obs:inner-segments},
    \begin{equation}
        |\calS| = |E| + 2|\calX|.\label{eq:segment-count}
    \end{equation}
    
    Considering the total sum of $\|c\|$ over all cells $c \in \calC$, we count every vertex $v \in V$ exactly $\deg(v)$ times and every edge-segment exactly twice.
    (Here we use that $|E| \geq 1$ and, thus, $\deg(v) \geq 1$ for each $v \in V$ as $\Gamma$ is connected.)
    Thus,
    \begin{equation}
        \sum_{c \in \calC} \frac14 \|c\| = \frac14 \left( \sum_{v \in V} \deg(v) + 2|\calS|\right) = \frac14 (2|E| + 2|\calS|) \overset{\eqref{eq:segment-count}}{=}\frac14(4|E| + 4|\calX|) = |E| + |\calX|. \label{eq:density-proof-1}
    \end{equation}
    Let $\Lambda = (V_\Lambda,E_\Lambda)$ be the planarization of $G$. It has exactly $|V_\Lambda|=|V| + |\calX|$ vertices, $|E_\Lambda|=|\calS|$ edges, and $|\calC|$ faces.
    As $\Lambda$ is connected, we can apply Euler's Formula ($\ast$):
    \[
        |E| + 2|\calX| \overset{\eqref{eq:segment-count}}{=} |\calS| = |E_\Lambda| \overset{\text{($\ast$)}}{=} |V_\Lambda| + |\calC| - 2 = |V| - 2 + |\calC| + |\calX|,
    \]
    which gives the following two equations:
    \begin{eqnarray}
        |E| =& |V|-2 + |\calC| - |\calX| &= \hspace{0.2em} (|V|-2) - \hspace{0.5em} \sum_{c \in \calC}\left( -1 \right) \hspace{3em} - |\calX| \label{eq:density-proof-2}\\
        0 =& |V|-2 - (|E|+|\calX|) + |\calC| &\overset{\eqref{eq:density-proof-1}}{=} (|V|-2) - \sum_{c \in \calC} \left( \frac14 \|c\| - 1\right)\label{eq:density-proof-3}
    \end{eqnarray}
    Adding \eqref{eq:density-proof-2} and $(t-1)$ times \eqref{eq:density-proof-3} gives the result.
    %
\end{proof}

The Density Formula can be used to find upper bounds on edge densities by counting cells of small size.
To see how this works, let us plug in two specific values for~$t$ ($t=4$ and $t=5$, which we use quite often throughout the paper) resulting in the following statements:

\begin{corollary}\label{cor:density-formula-4}
    For any connected drawing $\Gamma$ of a graph $G = (V,E)$ with $|E| \geq 1$ we have
    \begin{equation*}
        |E| = 4|V|-8- \sum_{c \in \calC} (\frac34\|c\|-4) - |\calX| \leq 4|V|-8+\frac74|\calC_3| + |\calC_4| + \frac14|\calC_5| - |\calX|.
    \end{equation*}
\end{corollary}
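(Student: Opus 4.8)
The plan is to specialise the Density Formula to $t = 4$ and then estimate the resulting cell-sum crudely, one cell at a time. Substituting $t = 4$ into \cref{lem:density-formula} gives $\tfrac{t-1}{4} = \tfrac34$ and $-t = -4$, so the stated equality
\[
    |E| = 4|V| - 8 - \sum_{c \in \calC}\bigl(\tfrac34\|c\| - 4\bigr) - |\calX|
\]
holds verbatim, and nothing more is needed for it. Rewriting the correction term as $-\sum_{c \in \calC}(\tfrac34\|c\| - 4) = \sum_{c \in \calC}(4 - \tfrac34\|c\|)$, the inequality we must establish reduces to
\[
    \sum_{c \in \calC}\bigl(4 - \tfrac34\|c\|\bigr) \;\le\; \tfrac74\,|\calC_3| + |\calC_4| + \tfrac14\,|\calC_5| .
\]

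I would prove this by bounding the per-cell contribution $4 - \tfrac34\|c\|$ separately for each $c$. As a function of $\|c\|$ it is strictly decreasing; it equals $\tfrac74$, $1$, and $\tfrac14$ when $\|c\|$ is $3$, $4$, and $5$, respectively; and for $\|c\| \ge 6$ it is at most $4 - \tfrac34 \cdot 6 = -\tfrac12 < 0$. Hence, \emph{provided every cell of $\Gamma$ has size at least $3$}, we may sum the bound $4 - \tfrac34\|c\| \le \tfrac74$ over $c \in \calC_3$, the bound $\le 1$ over $c \in \calC_4$, the bound $\le \tfrac14$ over $c \in \calC_5$, and the bound $\le 0$ over all remaining cells, and adding these inequalities over all $c \in \calC$ yields exactly the desired estimate. (This is also precisely how the later applications use the corollary: count the few cells of size $3$, $4$, $5$, and discard the rest.)

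The one point that genuinely requires an argument — and the step I expect to be the main obstacle — is the claim that $\|c\| \ge 3$ for every cell $c$. Size $0$ is impossible since $\Gamma$ is connected and $|E| \ge 1$, so the boundary of every cell carries at least one edge-segment-incidence. Size $1$ is impossible because the single edge-segment on such a boundary would have to close up into a loop, whereas every edge-segment is a simple arc with two \emph{distinct} endpoints in $V \cup \calX$. Size $2$ would force a cell bounded by two distinct edge-segments joining the same two crossings $p$ and $q$; these segments lie on two distinct edges that consequently cross both at $p$ and at $q$, so the closure of the cell is a lens with empty interior. This last configuration is exactly what is excluded for simple and, more generally, non-homotopic drawings (cf.\ \cref{fig:example-cells} and \cref{obs:types-of-cells}), so under that hypothesis no cell has size at most $2$, which completes the proof. (I would flag here that the inequality — unlike the equality — really does rely on the absence of empty lenses; this is the subtle point to double-check when reading the authors' argument.)
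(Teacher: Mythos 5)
Your proof is correct and follows the same route the paper (implicitly) takes: substitute $t=4$ into \cref{lem:density-formula} for the equality, then bound the per-cell term $4-\tfrac34\|c\|$ by $\tfrac74$, $1$, $\tfrac14$, and $0$ according to $\|c\|=3,4,5$ and $\|c\|\geq 6$. The caveat you flag is moreover a genuine one: the inequality (unlike the equality) does require that no cell has size at most $2$, and the corollary's stated hypotheses do not guarantee this. For instance, two parallel edges between vertices $u,v$ that cross each other twice form a connected drawing with an empty lens of size $2$; there one checks $|E|=2$, $|\calX|=2$, $|\calC_3|=2$, $|\calC_4|=|\calC_5|=0$, so the right-hand side equals $4\cdot 2-8+\tfrac72-2=\tfrac32<2$, and the inequality fails. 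Your argument that sizes $0$ and $1$ are impossible and that size $2$ forces an empty lens between two crossings is right, so the inequality holds exactly under the non-homotopic (or simple) hypothesis present in all of the paper's applications; the corollary as literally stated is slightly too general.
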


\begin{corollary}\label{cor:density-formula-5}
    For any connected drawing $\Gamma$ of a graph $G = (V,E)$ with $|E| \geq 1$ we have
    \begin{equation*}
        |E| = 5|V|-10- \sum_{c \in \calC} (\|c\|-5) - |\calX| = 5|V|-10+2|\calC_3|+|\calC_4|-|\calX|-\sum_{c \in \calC_{\geq 5}}(\|c\|-5).
    \end{equation*}
\end{corollary}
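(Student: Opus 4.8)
The plan is to obtain \cref{cor:density-formula-5} as the instantiation of the Density Formula (\cref{lem:density-formula}) at the parameter value $t=5$, followed by a regrouping of the cell-sum according to cell sizes. First I would note that the hypotheses needed are exactly the ones available here ($\Gamma$ connected, $|E|\geq 1$), so \cref{lem:density-formula} applies. Plugging in $t=5$, the routine computation $\frac{t-1}{4}=1$ and $t(|V|-2)=5|V|-10$ shows that the per-cell weight $\frac{t-1}{4}\|c\|-t$ collapses to $\|c\|-5$, so the formula reads
\[
  |E| \;=\; 5|V|-10-\sum_{c\in\calC}(\|c\|-5)-|\calX|,
\]
which is precisely the first claimed equality.

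For the second equality I would split the index set of the sum according to cell size, using that $\calC = \calC_3\cup\calC_4\cup\calC_{\geq 5}$ is a partition of $\calC$ (this is the content of \cref{obs:types-of-cells}; see below). Then
\[
  \sum_{c\in\calC}(\|c\|-5)\;=\;\sum_{c\in\calC_3}(3-5)+\sum_{c\in\calC_4}(4-5)+\sum_{c\in\calC_{\geq 5}}(\|c\|-5)\;=\;-2|\calC_3|-|\calC_4|+\sum_{c\in\calC_{\geq 5}}(\|c\|-5).
\]
Substituting this into the first equality, the leading sign in front of $\sum_{c\in\calC}(\|c\|-5)$ flips the first two summands and leaves the last one negative, giving
\[
  |E|\;=\;5|V|-10+2|\calC_3|+|\calC_4|-|\calX|-\sum_{c\in\calC_{\geq 5}}(\|c\|-5),
\]
as desired.

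Since everything else is a one-line algebraic substitution, the only point that requires a moment's care — and hence the ``main obstacle'' — is verifying that the three-way partition of $\calC$ by size really is exhaustive, i.e.\ that $\Gamma$ has no cell of size $1$ or $2$. A cell of size $1$ is impossible: its boundary would consist of a single vertex-incidence, or of a single edge-segment-incidence whose two ends are the same crossing, and neither can occur because edges are simple Jordan curves with only proper crossings and $G$ has no loops. A cell of size $2$ has a boundary of the form ``crossing, edge-segment, crossing, edge-segment'' with the two edge-segments lying on distinct edges, hence it is exactly a lens with empty interior; such configurations are excluded (cf.\ \cref{fig:bad-lenses}) under the non-homotopic assumption in force for all drawings to which this corollary is applied. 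Thus $\calC_1=\calC_2=\emptyset$, the partition is valid, and the proof goes through.
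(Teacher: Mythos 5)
Your proof is correct and takes the same route as the paper, which states the corollary as an immediate instantiation of \cref{lem:density-formula} at $t=5$ followed by regrouping the cell-sum by size. Your extra care about excluding cells of size $1$ and $2$ is well placed: the second equality does silently require $\calC_1=\calC_2=\emptyset$ (an empty lens between two twice-crossing edges would be a size-$2$ cell and would break it), and your observation that this holds in every non-homotopic drawing to which the corollary is applied is exactly the justification the paper leaves implicit.
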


So indeed, \cref{cor:density-formula-4} allows us to derive upper bounds on $|E|$ by proving upper bounds on $\frac74|\calC_3| + |\calC_4| + \frac14|\calC_5| - |\calX|$, which can be done by counting cells of sizes $3$, $4$, and $5$ and cross-charging them with the crossings.
Similarly, noting that $\sum_{c \in \calC_{\geq 5}} (\|c\|-5)$ is non-negative, \cref{cor:density-formula-5} allows us to derive upper bounds on $|E|$ by proving upper bounds on $2|\calC_3| + |\calC_4| - |\calX|$.
In fact, by taking into account the cells of larger sizes, one can sometimes obtain more precise bounds.
Thus, in the remainder of the section, we will devise some general tools that help with the required counting / charging arguments.
Moreover, we give a first concrete example of such an argument by proving \cref{lem:4n-8-more-general}, which is a simple but very general statement --- in fact, it immediately gives two bounds of $4n-8$ in \cref{tab:overview}.

\begin{lemma}\label{lem:4n-8-more-general}
    Let $\Gamma$ be a non-homotopic connected drawing of a graph $G = (V,E)$ with $|V| \geq 3$ and with no \C-cells, no \B-cells, no \F-cells, no \A-cells, and no \E-cells.
    Then $|E|\leq 4|V|-8$.
\end{lemma}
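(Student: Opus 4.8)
The plan is to feed the hypotheses directly into \cref{cor:density-formula-4} and show that the three error terms collapse. Since $\Gamma$ is a non-homotopic connected drawing with $|V|\geq 3$, \cref{obs:types-of-cells} tells us that $\calC_3$ is exactly the set of \C-cells, that $\calC_4$ is exactly the set of \A-cells together with the \B-cells, and that $\calC_5$ is exactly the set of \D-cells together with the \E-cells and \F-cells. By assumption there are no \C-cells, no \A-cells, no \B-cells, no \E-cells, and no \F-cells, so $\calC_3=\calC_4=\emptyset$ and every cell of $\calC_5$ is a \D-cell. Plugging $|\calC_3|=|\calC_4|=0$ into \cref{cor:density-formula-4} leaves $|E|\leq 4|V|-8+\frac14|\calC_5|-|\calX|$, so it remains to show $|\calC_5|\leq 4|\calX|$.

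For this I would charge every \D-cell to a crossing on its boundary. The key observation is a parity argument: because $\Gamma$ is connected, $\partial c$ is a single cyclic sequence alternating between points of $V\cup\calX$ and edge-segments, so the number of edge-segment-incidences of $c$ equals the number of vertex-incidences of $c$ plus the number of crossing-incidences of $c$; hence $\|c\|$ equals twice the number of vertex-incidences plus the number of crossing-incidences of $c$. As a \D-cell $c$ has $\|c\|=5$, which is odd, it must have at least one crossing-incidence. On the other hand, summing the number of crossing-incidences over all cells $c\in\calC$ counts, for each crossing $x\in\calX$, the degree of $x$ as a vertex of the planarization $\Lambda$, which is exactly $4$; so this total is $4|\calX|$. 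Since each crossing-incidence belongs to exactly one cell, we conclude $|\calC_5|\leq\sum_{c\in\calC}(\text{number of crossing-incidences of }c)=4|\calX|$, and therefore $\frac14|\calC_5|-|\calX|\leq 0$, which gives $|E|\leq 4|V|-8$.

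I do not expect a genuine obstacle here: the whole argument is a one-shot application of the Density Formula plus elementary counting. The only point that needs a little care is the charging step --- making sure that a \D-cell really does touch a crossing (handled by the parity observation above, or alternatively by just reading off \cref{fig:example-cells}) and that the charges do not collide (which holds because a crossing-incidence is, by definition, an incidence of one particular cell).
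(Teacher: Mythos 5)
Your proof is correct and follows essentially the same route as the paper: both apply \cref{cor:density-formula-4} after observing that the hypotheses kill $\calC_3$ and $\calC_4$ and leave only \D-cells in $\calC_5$, and both bound $\#\text{\D-cells}\leq 4|\calX|$ by double-counting crossing-incidences. The only (harmless) difference is that you justify ``every \D-cell touches a crossing'' via the parity of $\|c\|=5$, whereas the paper reads it directly off the definition of a \D-cell.
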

\begin{proof}
    By assumption and \cref{obs:types-of-cells}, we have $|\calC_3| = 0$ and $|\calC_4| = 0$ and $|\calC_5| = \#$\D-cells.
    Clearly, every crossing is incident to at most four \D-cells and every \D-cell has one incident crossing.
    In particular, it follows that $\#$\D-cells $\leq 4|\calX|$.
    Therefore, the Density Formula with $t = 4$ (\cref{cor:density-formula-4}) immediately gives
    \[
        \hspace*{-1em}|E| \leq 4|V|-8 + \frac74|\calC_3| + |\calC_4| + \frac14|\calC_5| - |\calX| = 4|V|-8 +\frac14\#\text{\D-cells} - |\calX| \leq 4|V| - 8.\qedhere
    \]
\end{proof}

\begin{lemma}\label{lem:A-cells-vs-X}
    Let $\Gamma$ be any non-homotopic drawing.
    Then $\#\text{\A-cells} \leq |\calX|$.
    Moreover, one can assign each \A-cell $c$ a crossing in $\partial c$ such that each crossing is assigned at most once.
\end{lemma}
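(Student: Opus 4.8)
The plan is to reduce the assignment to an orientation problem on an auxiliary multigraph. Recall that a \A-cell $c$ is a non-degenerate ``triangular'' cell whose boundary visits, in cyclic order, a vertex $v$, an outer edge-segment $s_1\subseteq e_1$, a crossing $x_1$, an \emph{inner} edge-segment $s_2\subseteq g$, a crossing $x_2$, and an outer edge-segment $s_3\subseteq e_2$ back to $v$; here $e_1,e_2$ are two distinct edges incident to $v$, $g$ is a third edge ($g\neq e_1,e_2$) crossing $e_1$ at $x_1$ (the first crossing of $e_1$ seen from $v$) and $e_2$ at $x_2$, and $s_2$ is the only inner edge-segment on $\partial c$. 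Being non-degenerate, $c$ has two distinct crossings on its boundary, so we may form the multigraph $M$ on vertex set $\calX$ in which each \A-cell becomes an edge joining its two crossings. An assignment as required in the lemma is exactly an orientation of $M$ with every in-degree at most $1$ (assign each \A-cell to the head of its edge), and injectivity then also gives $\#\text{\A-cells}=|E(M)|\le|V(M)|=|\calX|$. A loopless multigraph admits such an orientation as soon as it has maximum degree at most $2$, since it is then a disjoint union of paths and cycles, each of which admits an orientation with all in-degrees at most $1$. So it suffices to prove that \emph{every crossing is incident to at most two \A-cells}.

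The key step is the following claim: \emph{no inner edge-segment is the $s_2$-segment of two distinct \A-cells}. Suppose $c\neq c'$ are \A-cells and the inner edge-segment $s$, with crossing endpoints $x_1,x_2$, lies on both $\partial c$ and $\partial c'$. By non-degeneracy $s$ appears once on each boundary, so $c$ and $c'$ are the two cells on the two sides of $s$, and $s$ is the $s_2$-segment of both. Let $e_1,e_2,v$ and $e_1',e_2',v'$ be the associated edges and vertices. At $x_1$ the two crossing edges are $g$ and one other, which must equal both $e_1$ and $e_1'$; likewise $e_2=e_2'$. A short argument rules out $v=v'$ (then $\partial c=\partial c'$, forcing the whole drawing to be a single triangle, impossible since $e_1$ continues past $x_1$), so $v$ and $v'$ are the two distinct endpoints of $e_1$; since $x_1$ is the first crossing of $e_1$ from \emph{both} ends, $x_1$ is its only crossing, and similarly $x_2$ is the only crossing of $e_2$. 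Hence $e_1,e_2$ are parallel edges between $v$ and $v'$, so $e_1\cup e_2$ is a simple closed curve bounding a region $R$, and one checks $R=\overline c\cup\overline{c'}$ and $\operatorname{int}(R)=c\cup c'\cup\operatorname{int}(s)$. Being a union of two cells and the interior of one edge-segment, $\operatorname{int}(R)$ contains no vertex and no crossing; but $R$ is a lens, contradicting non-homotopy.

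It remains to bound the \A-cells at a fixed crossing $x=e\cap f$. It has four corners and four incident edge-segments (two on $e$, two on $f$), and each \A-cell incident to $x$ occupies exactly one corner; of the two edge-segments bounding that corner, one is its inner segment $s_2$ and the other is outer (the segments of $c$ meeting $x$ are $\{s_1,s_2\}$ or $\{s_2,s_3\}$). Let $k$ be the number of inner edge-segments among the four at $x$. By the claim, distinct \A-cells incident to $x$ select distinct inner segments, so there are at most $k$ of them. Also, each such \A-cell's corner is bounded by one of the $4-k$ outer segments, and each edge-segment bounds exactly two of the four corners, so there are at most $2(4-k)$ of them. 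Hence the number of \A-cells at $x$ is at most $\min\{k,\,2(4-k)\}\le 2$ for every $k\in\{0,1,2,3,4\}$. Thus $\Delta(M)\le 2$, which finishes the argument.

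The main obstacle is the claim in the second paragraph, which is the only place the non-homotopy hypothesis enters, via the lens spanned by the two parallel outer edges. The delicate part is the bookkeeping: identifying that $e_1=e_1'$, $e_2=e_2'$, that each of these edges has just one crossing, that $v\neq v'$, and that $\operatorname{int}(R)$ is exactly $c\cup c'\cup\operatorname{int}(s)$ and hence free of vertices and crossings; the remaining steps are routine graph theory and local counting at a crossing.
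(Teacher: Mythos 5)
Your proof is correct and takes essentially the same route as the paper's (much terser) argument: the paper likewise observes that each corner of a \A-cell at a crossing pairs its inner segment with an outer one, asserts that non-homotopy lets each inner edge-segment serve as the middle segment of at most one \A-cell (which you prove in full via the lens bounded by the two parallel edges $e_1,e_2$), deduces that every crossing meets at most two \A-cells, and concludes from the fact that each \A-cell has two distinct crossings — i.e., by orienting your degree-$\le 2$ multigraph on $\calX$. The one quibble is your parenthetical justification of $v\neq v'$: the claim $\partial c=\partial c'$ does not follow directly, but the conclusion is immediate anyway, since $v=v'$ would make both outer segments of $e_1$ at $x_1$ incident to the same vertex, forcing $e_1$ to be a loop, which the paper's conventions exclude.
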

\begin{proof}
    At every crossing incident to a \A-cell there is one inner edge-segment and one outer edge-segment.
    As $\Gamma$ is non-homotopic, every inner edge-segment is incident to at most one \A-cell.
    This implies that every crossing is incident to at most two \A-cells, while every \A-cell has two distinct incident crossings, which implies the claim.
\end{proof}

\begin{lemma}\label{lem:B-general}
    Let $\Gamma$ be a connected non-homotopic drawing of some graph $G$ with at least three vertices.
    Then
    \[
        |\calS_{\rm in}| \geq \#\text{\A-cells} + 2\cdot\#\text{\B-cells} + 3\cdot\#\text{\C-cells} \quad \text{and} \quad |\calS_{\rm in}| + \#\text{\A-cells} \geq 2|\calC_4| + 3|\calC_3|.
    \]
\end{lemma}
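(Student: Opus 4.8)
The second inequality follows from the first: by \cref{obs:types-of-cells} we have $|\calC_3| = \#\text{\C-cells}$ and $|\calC_4| = \#\text{\A-cells} + \#\text{\B-cells}$, so $2|\calC_4| + 3|\calC_3| = 2\#\text{\A-cells} + 2\#\text{\B-cells} + 3\#\text{\C-cells}$, and subtracting $\#\text{\A-cells}$ on both sides turns the second inequality into the first. So it suffices to prove $|\calS_{\rm in}| \geq \#\text{\A-cells} + 2\cdot\#\text{\B-cells} + 3\cdot\#\text{\C-cells}$. The idea is to assign, to each small cell $c$, a set $\sigma(c)$ of inner edge-segments on $\partial c$, of size $1$, $2$, or $3$ when $c$ is an \A-, \B-, or \C-cell, respectively, such that the sets $\sigma(c)$ are pairwise disjoint; summing sizes then gives the bound. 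The relevant local facts are that an \A-cell has exactly one inner edge-segment on its boundary (the segment between its two crossings), a \C-cell has exactly three, pairwise distinct (forming a triangle), and a \B-cell has exactly four edge-segment-incidences, all of inner segments. For an \A-cell I take $\sigma(c)$ to be its unique inner segment, and for a \C-cell $\sigma(c)$ consists of its three inner segments.

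\textbf{Disjointness for \A- and \C-cells.}
That no inner segment borders two \A-cells is the argument behind \cref{lem:A-cells-vs-X}. The key new claim is: \emph{if an inner segment $s$ borders an \A-cell or a \C-cell, then the cell on the other side of $s$ is neither an \A-cell nor a \C-cell}. I would prove this by a forbidden-lens argument: glue the two cells along $s$. Since an \A-cell (including its vertex) and a \C-cell each have all of their boundary other than $s$ drawn along just two edges, and these two edges moreover meet in a common vertex or crossing (the ``third corner''), the glued region is a closed topological disk whose boundary is formed by parts of exactly two edges; its interior is the union of the two (empty) cells and the interior of $s$, hence contains no vertex and no crossing --- an empty lens, contradicting that $\Gamma$ is non-homotopic. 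In particular the sets $\sigma(c)$ over all \A- and \C-cells are pairwise disjoint.

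\textbf{\B-cells.}
It remains to choose, for each \B-cell $c$ with boundary quadrilateral $Q$, two of its four boundary inner segments, disjointly from everything already chosen. By a similar gluing argument I would first show that \emph{a \B-cell shares at most one segment of each opposite pair of sides of $Q$ with an \A- or a \C-cell} (hence at most two of its four sides in total): if both segments $t_1, t_3$ of an opposite pair were shared with \A- or \C-cells, gluing those two neighbours onto $Q$ along $t_1$ and $t_3$ would again yield a topological disk bounded by parts of only two edges. (Concretely, along one of the edges bounding $Q$ three of its segments would otherwise be nested in an impossible way, or else two edges would cross twice with an empty lens between them.) Thus each \B-cell has at least two boundary inner segments not used by any \A- or \C-cell.

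\textbf{Main obstacle.}
What remains, and is the delicate part, is resolving the conflicts \emph{among the \B-cells themselves}: two \B-cells can share a boundary inner segment, and the inequality is tight even when there is a lot of such sharing (a \B-cell sharing two of its sides with two \C-cells occurs already inside suitable line arrangements), so there is no room for a purely local charging. To finish I would phrase the remaining choice as a degree-constrained orientation / Hall-type problem on the ``conflict graph'' whose vertices are the \B-cells and whose edges are the inner segments shared by (and available to) two \B-cells, using that every \B-cell has at least two available segments while every segment is available to at most two \B-cells; a short flow argument then yields the required two disjoint segments per \B-cell. Degenerate \B-cells, which may have a repeated edge-segment-incidence, have to be checked separately, but they still carry at least two distinct inner segments on their boundary, so the same bookkeeping applies.
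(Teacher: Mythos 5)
Your reduction of the second inequality to the first, and your treatment of \A- and \C-cells (each inner edge-segment borders at most one such cell, by a forbidden-lens argument), match the paper's proof, which calls these segments \emph{bad} and notes $|\calB| = \#\text{\A-cells} + 3\cdot\#\text{\C-cells}$. The problem is the step you yourself flag as delicate and then leave unproved: the assignment of two segments to each \B-cell. Saying ``a short flow argument then yields the required two disjoint segments per \B-cell'' is not a proof, and the local facts you establish do not imply the Hall condition you would need. Each \B-cell having at least two available segments while each segment is available to at most two \B-cells only gives $|N(T)|\geq |T|$ for a set $T$ of \B-cells, whereas choosing two disjoint representatives per cell requires $|N(T)|\geq 2|T|$. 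For instance, nothing in your write-up excludes three \B-cells pairwise sharing an available segment with each of their remaining sides adjacent to \A- or \C-cells; ruling such configurations out requires a further structural argument about how \B-cells can chain together, which is exactly the content you are missing.

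The paper closes this gap differently and more cheaply: it never constructs an explicit assignment. It defines an auxiliary graph $J$ on $\calS_{\rm in}$ whose edges are the \emph{opposite} pairs of segments of \B-cells (so $|E_J| = 2\cdot\#\text{\B-cells}$, maximum degree two), shows $J$ is acyclic (a cycle would give an edge of $G$ with no endpoints) and that no path of $J$ on two or more vertices has two bad endpoints (that would be an empty lens), and concludes $|V_J| \geq |E_J| + |\calB|$, which is the first inequality. Note that the opposite-pair structure, which your proposal discards by treating all four sides of a \B-cell symmetrically, is what makes the path/forest argument work, and the ``no two bad endpoints on a path'' statement is a global claim about whole chains of \B-cells, strictly stronger than your per-cell claim that at most one segment of each opposite pair is bad. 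To repair your proof you would essentially have to rediscover this chain analysis, at which point the paper's counting is the shorter route.
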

\begin{proof}
    The second inequality follows by combining the first inequality with \cref{obs:types-of-cells}.
    To prove the first inequality,
    let us call an inner edge-segment \emph{bad} if it is incident to a \A-cell or \C-cell in $\Gamma$.
    As~$\Gamma$ is non-homotopic, every bad edge-segment is incident to only one \A-cell or \C-cell.
    Hence, for the set $\calB \subseteq \calS_{\rm in}$ of all bad edge-segments we have $|\calB| = \#\text{\A-cells} + 3\cdot\#\text{\C-cells}$.
    Define an auxiliary graph $J = (V_J,E_J)$ with vertex set $V_J = \calS_{\rm in}$ and with two edge-segments being adjacent in $J$ if and only if they are an opposite pair of edge-segments for some \B-cell.
    Note that this and the following is true whether the \B-cells are degenerate or not.
    Then $|V_J| = |\calS_{\rm in}|$ and $|E_J| = 2\cdot \#\text{\B-cells}$, and the maximum degree in $J$ is at most two.
    Observe that $J$ contains no cycle, as such a cycle would correspond to a cyclic arrangement of \B-cells and therefore two edges in $G$ with no endpoints.
    Hence, $J$ is a disjoint union of paths (possibly of length $0$) and every bad edge-segment is an endpoint of one such path.
    Further, no path in $J$ on two or more vertices can have two bad endpoints, as such a path would correspond to a lens in $\Gamma$ containing no vertex and no crossing (as illustrated in \cref{fig:bad-lenses}), contradicting the fact that~$\Gamma$ is non-homotopic.
    Note that this implies $|V_J|\geq |E_J|+|\calB|$.
    Recalling that $|\calB| = \#\text{\A-cells} + 3\cdot\#\text{\C-cells}$, $|V_J| = |\calS_{\rm in}|$ and $|E_J| = 2\cdot \#\text{\B-cells}$, we obtain the first inequality of the lemma.
\end{proof}

\section{\texorpdfstring{$\boldsymbol{k}$}{k}-Bend RAC-Graphs}
\label{sec:k-bend-RAC-graphs}

In this section, we present our results for 1-bend and 2-bend RAC-graphs. We begin with the upper bounds, for which we only require the following lemma.

\begin{lemma}\label{lem:1/2-bend-RAC}
    Let $k\in\{1,2\}$ and $\Gamma$ be a non-homotopic drawing of a connected graph $G = (V,E)$ such that every crossed edge $e \in E_x$ is a polyline with at most $k$ bends, and every crossing is a right-angle crossing.
    Then $2|\calC_3| + |\calC_4| \leq |\calX| +\frac{k-1}{2}(|E_x|+1)$.
\end{lemma}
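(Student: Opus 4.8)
The plan is to reduce the statement, via the combinatorial tools of \cref{sec:all-drawings}, to a purely geometric inequality about the bends of crossed edges, and then to prove that inequality using the rotation formula together with the rigidity of right-angle crossings.

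\emph{Reduction.} We may assume $|V|\ge 3$. By \cref{obs:types-of-cells}, $|\calC_3|=\#\text{\C-cells}$ and $|\calC_4|=\#\text{\A-cells}+\#\text{\B-cells}$, so the claim reads $2\#\text{\C-cells}+\#\text{\A-cells}+\#\text{\B-cells}\le|\calX|+\frac{k-1}{2}(|E_x|+1)$. Now \cref{lem:B-general} gives $3\#\text{\C-cells}+\#\text{\A-cells}+2\#\text{\B-cells}\le|\calS_{\rm in}|$, and \cref{obs:inner-segments} gives $|\calS_{\rm in}|=2|\calX|-|E_x|$; adding $\#\text{\C-cells}+\#\text{\A-cells}$ to both sides of the former inequality and halving yields
\[
 2\#\text{\C-cells}+\#\text{\A-cells}+\#\text{\B-cells}\;\le\;|\calX|+\frac{\#\text{\C-cells}+\#\text{\A-cells}-|E_x|}{2}.
\]
Hence it suffices to prove the geometric inequality $\#\text{\C-cells}+\#\text{\A-cells}\le k|E_x|+(k-1)$.

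\emph{Geometry of small cells.} Every \C-cell and every \A-cell is bounded solely by edge-segments of crossed edges: the three edge-segments of a \C-cell are all inner, and of the three of an \A-cell one is inner while the other two join its vertex to its two crossings, so all three underlying edges are crossed. Since the planarization of $\Gamma$ is connected, each such cell $c$ is a topological disk, and at every crossing-incidence of $c$ the interior angle equals $90^\circ$, because $c$ occupies exactly one of the four $90^\circ$ quadrants there --- two adjacent quadrants would force the separating edge-segment into the interior of $c$. Applying the rotation formula to $c$ (the exterior angles at its corners plus the signed turnings at its bends sum to $+360^\circ$ for a bounded cell), the bends on $\partial c$ contribute $+90^\circ$ in total for a bounded \C-cell (three $90^\circ$ corners) and a positive total $\theta$ for a bounded \A-cell, where $\theta$ is its interior angle at the unique vertex-incidence. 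In particular $\partial c$ contains a bend of a crossed edge whose turning is positive as seen from $c$, i.e.\ $c$ lies on the convex side of that bend.

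\emph{Charging.} Assign to each bounded \C-cell and each bounded \A-cell one such positive bend on its boundary. A bend has positive turning for at most the one cell lying on its convex side, so this assignment is injective; as each crossed edge carries at most $k$ bends, there are at most $k|E_x|$ bounded \C-cells and \A-cells. The unique remaining cell to consider is the unbounded cell of $\Gamma$; it contributes at most $1$, so $\#\text{\C-cells}+\#\text{\A-cells}\le k|E_x|+1$, which already matches the target $k|E_x|+(k-1)$ for $k=2$. For $k=1$ one must save this last unit, i.e.\ show the unbounded cell is neither a \C-cell nor an \A-cell: as unbounded cell, a \C-cell would need its bends to total $-630^\circ$ and hence number at least four, whereas its three bounding edge-segments of crossed edges carry at most $3$ bends altogether --- and an analogous (more delicate) argument has to exclude the unbounded \A-cell as well. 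Granting this, $\#\text{\C-cells}+\#\text{\A-cells}\le k|E_x|+(k-1)$ in both cases, completing the proof.

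\emph{Main obstacle.} I expect the charging step to be the crux: keeping the convex-side assignment injective in the presence of degenerate \B-cells that share one of these bends on their reflex side, and --- hardest of all --- ruling out for $k=1$ that the unbounded cell is an \A-cell, where a reflex interior angle at its vertex would a priori let three bends suffice. Excluding this last case should require exploiting the right-angle condition once more, beyond plain angle-sum bookkeeping.
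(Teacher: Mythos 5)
Your overall strategy is the paper's strategy: combine \cref{lem:B-general} with $|\calS_{\rm in}|=2|\calX|-|E_x|$ from \cref{obs:inner-segments}, and then bound $\#\text{\A-cells}+\#\text{\C-cells}$ geometrically by charging each such bounded cell to a convex bend on its boundary (the turning-number computation with $90^\circ$ corners at crossings is exactly the paper's justification that every bounded \A-cell and \C-cell has a convex corner at a bend, and the injectivity argument -- only the cell on the convex side of a bend can be charged to it -- is the same). The combinatorial reduction is also the same inequality-adding, just packaged differently.

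However, your packaging creates a genuine gap for $k=1$ that you yourself flag but do not close. By folding $\#\text{\C-cells}+\#\text{\A-cells}$ into the halved inequality before bounding it, you commit yourself to proving $\#\text{\C-cells}+\#\text{\A-cells}\le k|E_x|+(k-1)$, which for $k=1$ means you must show the unbounded cell is neither a \C-cell nor an \A-cell. Your angle-sum argument does exclude the unbounded \C-cell, but for the unbounded \A-cell a reflex angle at its vertex-incidence leaves the case open, and you explicitly punt on it. This extra work is unnecessary: your own charging already yields $\#\text{\C-cells}+\#\text{\A-cells}\le k|E_x|+1$ unconditionally, and substituting that into your reduction gives $2|\calC_3|+|\calC_4|\le|\calX|+\frac{(k-1)|E_x|+1}{2}$; since the left-hand side and $|\calX|$ are integers, the stray $\tfrac12$ is absorbed for $k=1$ and the bound becomes exactly $|\calX|$, while for $k=2$ it is exactly $|\calX|+\frac{|E_x|+1}{2}$. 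This integrality step is precisely how the paper finishes, and with it your proof closes without any analysis of the unbounded cell beyond the $+1$ allowance.
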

\begin{proof}
    %

    \cref{lem:B-general} gives
    \begin{equation}
        |\calS_{\rm in}| \geq \#\text{\A-cells} + 2 \cdot \#\text{\B-cells} + 3 \cdot \#\text{\C-cells}.\label{eq:inner-ineq-1}
    \end{equation}
    Now, each \A-cell and each \C-cell $c$ is a polygon, and as all crossings have right angles, $c$ has at least one convex corner that is a bend, except when $c$ is the unbounded cell.
    As every bend is a convex corner for only one cell, we have
    \begin{equation}
        k|E_x| \geq  \#\text{\A-cells} + \#\text{\C-cells} - 1.\label{eq:inner-ineq-2}
    \end{equation}
    Together this gives the desired
    \[
        4|\calC_3| + 2|\calC_4| \overset{\eqref{eq:inner-ineq-1},\eqref{eq:inner-ineq-2}}{\leq} |\calS_{\rm in}| + k|E_x| + 1 = 2|\calX| + (k-1)|E_x| + 1,
    \]
    where the last equality uses $|\calS_{\rm in}| = 2|\calX|-|E_x|$ from \cref{obs:inner-segments}.
    Dividing by $2$ and realizing that $2|\calC_3| + |\calC_4|$ is an integer, concludes the proof.
\end{proof}

\begin{theorem}\label{thm:1/2-bend-RAC}
    For every $k \in \{1,2\}$ and every $n \geq 3$, every connected non-homotopic $n$-vertex $k$-bend RAC-graph $G$ has at most $k(5n-10)+(k-1)$ edges.
\end{theorem}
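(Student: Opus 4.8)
The plan is to feed the cell-count bound of \cref{lem:1/2-bend-RAC} into the Density Formula at $t=5$. First I would fix a non-homotopic $k$-bend RAC drawing $\Gamma$ of $G$ (one exists by assumption); since $G$ is connected on $n \geq 3$ vertices it has at least two edges, so $\Gamma$ is a connected drawing with $|E| \geq 1$ and \cref{cor:density-formula-5} applies. Discarding the nonnegative term $\sum_{c \in \calC_{\geq 5}}(\|c\|-5)$ there yields
\[
    |E| \leq 5n - 10 + 2|\calC_3| + |\calC_4| - |\calX|.
\]
Since $\Gamma$ is a $k$-bend RAC drawing, every crossed edge is a polyline with at most $k$ bends and every crossing is a right-angle crossing, so the hypotheses of \cref{lem:1/2-bend-RAC} hold and give $2|\calC_3| + |\calC_4| - |\calX| \leq \tfrac{k-1}{2}(|E_x|+1)$. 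Combining the two inequalities reduces everything to the single estimate
\[
    |E| \leq 5n - 10 + \frac{k-1}{2}\bigl(|E_x|+1\bigr),
\]
which I would then specialize to $k=1$ and $k=2$.

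For $k = 1$ the correction term disappears and the bound reads $|E| \leq 5n-10 = k(5n-10)+(k-1)$, finishing that case. For $k = 2$ it becomes $|E| \leq 5n-10 + \tfrac12(|E_x|+1)$; the key observation is that the crossed edges are a subset of all edges, so $|E_x| \leq |E|$ and the right-hand side contains $|E|$ again. Substituting and rearranging, $\tfrac12|E| \leq 5n - 9.5$, hence $|E| \leq 10n - 19 = 2(5n-10)+1 = k(5n-10)+(k-1)$, as claimed.

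I do not expect a genuine obstacle: all the combinatorial work has already been done in \cref{lem:1/2-bend-RAC} (and in proving the Density Formula). The only mildly clever point is the self-referential inequality $|E_x| \leq |E|$ used for $k=2$, which converts a bound depending on the a priori unknown quantity $|E_x|$ into one purely in $n$. Beyond that I would only verify the degenerate bookkeeping --- that connectivity and $n \geq 3$ indeed force $|E| \geq 1$, so that \cref{cor:density-formula-5} is legitimately invoked.
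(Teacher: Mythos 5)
Your proposal is correct and follows essentially the same route as the paper: apply \cref{cor:density-formula-5}, bound $2|\calC_3|+|\calC_4|-|\calX|$ via \cref{lem:1/2-bend-RAC}, and then eliminate $|E_x|$ using $|E_x|\leq|E|$ (the paper phrases this same step as $|E|\leq|E|+(k-1)|E_p|$). The arithmetic in both cases $k=1,2$ checks out.
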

\begin{proof}
    Let $\Gamma$ be a non-homotopic $k$-bend RAC-drawing of $G = (V,E)$.
    As $G$ is connected, so is $\Gamma$.
    The Density Formula with $t = 5$ (\cref{cor:density-formula-5}) and \cref{lem:1/2-bend-RAC} immediately give
    \[
        |E| \leq 5|V|-10 + 2|\calC_3| + |\calC_4| - |\calX| \leq 5|V|-10 + \frac{k-1}{2}(|E_x|+1),
    \]
    which implies the desired $|E| \leq |E| + (k-1)|E_p| \leq k(5|V|-10)+(k-1)$.
\end{proof}



The lower bound construction in~\cite[Theorem 6]{ABKKPU23} gives $2$-bend RAC-graphs with $n$ vertices and $10n-46$ edges, but the provided drawings are not simple (not even non-homotopic).
We modify it giving simple $2$-bend RAC-graphs with $n$ vertices and $10n - 54$ edges.

\begin{figure}[htb]
    \centering
    \includegraphics[width=\textwidth]{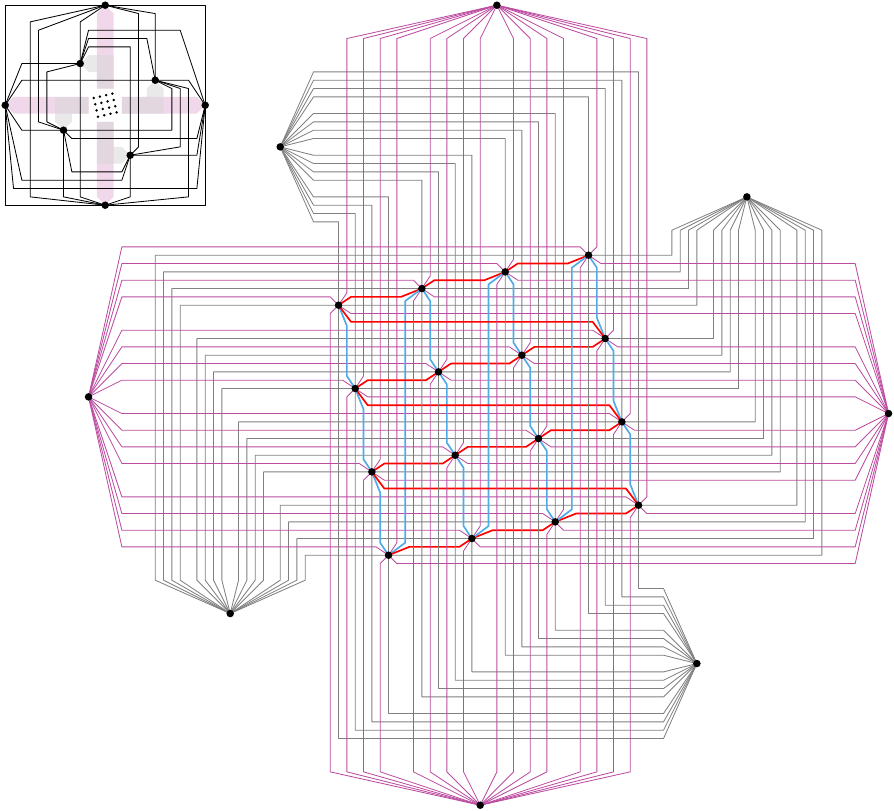}
    \caption{(Illustration of) a simple $2$-bend RAC-drawing of $G_4$ from \cref{thm:2-bend-RAC-LB}.}
    \label{fig:2-bend-ap-RAC}
\end{figure}

\begin{theorem}\label{thm:2-bend-RAC-LB}
    For every integer $k \geq 1$ there exists a simple connected $2$-bend RAC-graph $G_k$ with $n = k^2 + 8$ vertices and $10n-54$ edges.
\end{theorem}
\begin{proof}
    For $k \geq 1$, a simple 2-bend RAC-drawing of the graph $G_k$ (\cref{fig:2-bend-ap-RAC}) consists of
    \begin{itemize}
        \item a set $Q$ of $k^2 = n-8$ vertices in a regular but slightly rotated $k \times k$ grid,
        \item an $x$-monotone $2$-bend edge between any two vertices of $Q$\\
            with consecutive $y$-coordinates (red), \hfill ($n-9$ edges)
        \item a $y$-monotone $2$-bend edge between any two vertices of $Q$\\
            with consecutive $x$-coordinates (blue), \hfill ($n-9$ edges)
        \item a set $P$ of eight vertices around $Q$, each connected to all vertices\\of $Q$
            with either all (weakly) $x$-monotone $2$-bend edges or all\\(weakly) $y$-monotone $2$-bend edges (gray and purple), \hfill ($8(n-8)$ edges)
        \item a $2$-bend edge between any two vertices of $P$ (black). \hfill ($28$ edges)
    \end{itemize}
    The routing of the edges is illustrated in \cref{fig:2-bend-ap-RAC}.
\end{proof}

\section{Fan-Crossing Graphs}
\label{sec:fan-crossing}

Here, we present our upper bound for fan-crossing graphs, starting with the key lemma.

\begin{lemma}\label{lem:adj-crossing}
    Let $\Gamma$ be a simple connected fan-crossing drawing of a graph with at least three vertices.
    Then $|\calC_4| \leq |\calX|$.
\end{lemma}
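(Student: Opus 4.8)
The plan is to show that every \A-cell [actually \B-cell, wait — let me reconsider] — I mean every cell of size exactly four (a \A-cell or a \B-cell by \cref{obs:types-of-cells}) can be charged to a crossing on its boundary, with no crossing charged more than once. For \A-cells this is already handled by \cref{lem:A-cells-vs-X}, which gives an injective assignment of \A-cells to crossings. So the real work is to handle \B-cells and to make sure the two assignments (for \A-cells and for \B-cells) can be combined without conflict. First I would recall that in a simple drawing there are no parallel edges and no crossing adjacent edges, so a \B-cell $c$ (bounded by two crossing-incidences alternating with two edge-segment-incidences) has its two edge-segments lying on two edges $e,f$ that cross twice — but wait, in a \emph{simple} drawing two edges cross at most once, so a non-degenerate \B-cell bounded by parts of two distinct edges crossing twice cannot occur either. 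Hence in a simple fan-crossing drawing, every \B-cell is degenerate: its two ``crossing-incidences'' are actually the \emph{same} crossing appearing twice on $\partial c$, and the cell is a monogon-like region pinched at a single crossing $x$, bounded by two edge-segments that are the two sides of a lens-like region emanating from $x$.

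The key step is then to argue that each such degenerate \B-cell can be injectively assigned to ``its'' crossing $x$, and moreover that this does not clash with the \A-cell assignment. At a crossing $x$ the four edge-segments around $x$ bound four corners; a degenerate \B-cell sitting at $x$ occupies (at least) two opposite corners pinched together. Since $\Gamma$ is simple, $x$ is the crossing of two edges, each contributing two segments at $x$; the two edge-segments bounding the degenerate \B-cell must belong to the \emph{same} edge (one on each side of $x$) — otherwise the region would be bounded by one segment of each edge and would be a genuine lens with no vertex/crossing inside, which a simple drawing cannot have since simple drawings have no lenses. So the \B-cell at $x$ is ``monochromatic'' in the edge it follows, and each crossing admits at most one such degenerate \B-cell per edge through it, i.e., at most two per crossing; but because the fan-crossing condition controls which edges cross a given edge, and because the edge bounding the \B-cell re-crosses itself... no — an edge does not cross itself. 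Let me instead count globally: a degenerate \B-cell at $x$ forces the edge $e$ through $x$ to bound a ``teardrop'' at $x$, meaning $e$ crosses some edge, returns, and the region pinched at $x$ is empty; the emptiness plus simplicity plus fan-crossing should give that there is at most one \B-cell in the whole drawing incident to each crossing, hence $\#\text{\B-cells} \le |\calX|$, and then separately $|\calC_4| = \#\text{\A-cells} + \#\text{\B-cells}$; combining with \cref{lem:A-cells-vs-X} and an argument that an \A-cell and a \B-cell cannot both be charged to the same crossing (an \A-cell uses the crossing ``transversally'' between an inner and an outer segment, a degenerate \B-cell uses it ``tangentially''), we get $|\calC_4| \le |\calX|$.

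Concretely, I would proceed as follows. Step 1: Using simplicity, classify cells of size $4$: show \B-cells are all degenerate and each is pinched at exactly one crossing, bounded by two edge-segments of a single common edge $e$. Step 2: Show that at a crossing $x$, at most one \B-cell can be incident — here the fan-crossing condition enters, since the emptiness of the pinched region together with the star condition on edges crossing $e$ rules out a second nested \B-cell at the same $x$ (this, I expect, is the main obstacle: carefully ruling out the configuration where the crossed edge is forced to re-enter the teardrop). Step 3: Combine with \cref{lem:A-cells-vs-X}: assign each \A-cell to one of its two incident crossings (as in that lemma) and each \B-cell to its unique incident crossing; argue a crossing cannot receive both an \A-cell and a \B-cell assignment because the local rotation at the crossing is incompatible (an \A-cell needs an inner and an outer segment meeting transversally at $x$, whereas a degenerate \B-cell at $x$ occupies the two corners on either side of a single edge through $x$, forcing the other edge through $x$ to have both its segments ``outside'' the teardrop — tracing incidences shows the \A-cell's outer segment would have to lie inside the teardrop, contradicting emptiness). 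Hence the combined assignment $\calC_4 \to \calX$ is injective, giving $|\calC_4| = \#\text{\A-cells}+\#\text{\B-cells} \le |\calX|$.

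The step I expect to be the main obstacle is Step 2 (and the compatibility check in Step 3): precisely, ruling out two \B-cells at one crossing, and ruling out an \A-cell and a \B-cell at one crossing, both require a careful local analysis of the rotation system at a crossing combined with the fan-crossing/star condition and the no-lens property of simple drawings. Everything else — the classification of size-$4$ cells and the final counting — is routine given \cref{obs:types-of-cells}, \cref{obs:inner-segments}, and \cref{lem:A-cells-vs-X}.
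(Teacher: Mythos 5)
Your proposal rests on a misreading of what a \B-cell is, and this breaks the argument at its core. Recall that the size $\|c\|$ of a cell counts only edge-segment-incidences and vertex-incidences, \emph{not} crossing-incidences. A \B-cell is the vertex-free cell of size $4$: a quadrilateral bounded by \emph{four} edge-segment-incidences alternating with four crossing-incidences. The $2$-gon you describe (two edge-segments meeting at two crossings) would be a cell of size $2$, which indeed cannot occur in a simple drawing --- but that is not what must be counted here. Consequently, your conclusion that ``every \B-cell is degenerate'' is exactly backwards: in a simple drawing there are \emph{no} degenerate \B-cells (a degenerate \B-cell repeats a crossing on its boundary, which forces two edges to cross twice or an edge to self-intersect), while non-degenerate quadrilateral \B-cells are abundant in simple fan-crossing drawings --- they appear throughout the tight $5n-10$ examples. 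Your Steps 1--3 all manipulate objects (teardrops pinched at a single crossing, bounded by two segments of one edge) that do not exist, so the charging scheme you build on them proves nothing about the actual cells in $\calC_4$.

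The real difficulty of the lemma, which your proposal does not touch, is to injectively assign each quadrilateral \B-cell to one of its four incident crossings, compatibly with the assignment of \A-cells from \cref{lem:A-cells-vs-X}. The fan-crossing condition enters as follows: for a pair of opposite edge-segments of a \B-cell, the two underlying edges both cross the two edges forming the other pair, so by the fan (star) condition they share an endpoint; orienting each such segment toward that common endpoint yields a globally consistent orientation of inner edge-segments. One then shows that every \B-cell has a crossing at which both of its incident segments are outgoing --- ruling out a cyclic orientation requires a genuine topological argument (the paper embeds a $K_{3,3}-e$ and uses the uniqueness of its planar embedding) --- and finally checks that no crossing receives two cells. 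None of this is recoverable from your outline, so the proof as proposed does not go through.
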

\begin{proof}
    First, observe that there are no degenerate \B-cells since $\Gamma$ is simple.
    We shall map each cell $c \in \calC_4$ onto one of its incident crossings $\phi(c)$ in such a way that no crossing is used more than once, i.e., the mapping $\phi \colon \calC_4 \to \calX$ is injective.

    As an auxiliary structure, we orient edge-segments incident to \B-cells as follows.
    Let $c$ be a \B-cell and $s,s'$ be a pair of \emph{opposite} edge-segments in $\partial c$ (that do not share a crossing).
    As $\Gamma$ is simple, the corresponding edges $e,e'$ are distinct.
    Now orient $s$ and $s'$, each towards the (unique) common endpoint of $e$ and $e'$, which exists as $\Gamma$ is fan-crossing.
    Doing this for every \B-cell and every pair of opposite edge-segments, we obtain a well-defined  orientation:

    \begin{claim*}
        An edge-segment $s$ shared by two \B-cells $c_1,c_2$ has the same orientation in both.
    \end{claim*}
    \begin{claimproof}
    Observe that the six crossings incident to $c_1$ and $c_2$ are pairwise distinct since~$\Gamma$ is a simple drawing.
        Let $e = uv$ be the edge containing $s$ and $e_1,e_2$ be the two (distinct) edges crossing $e$ at the endpoints of $s$ (which are crossings in $\Gamma$).
        Further, let $f_1,f_2$ be the two edges containing the edge-segment opposite to $s$ in $c_1,c_2$, respectively.
        In particular, $e,f_1,f_2$ all cross $e_1$ and all cross $e_2$.
        As $\Gamma$ is fan-crossing\footnote{Here it is crucial that $e,f_1$ and $f_2$ do not form a triangle-crossing.}, $e,f_1,f_2$ have a common endpoint, say $u$.
        But then $s$ is oriented consistently towards $u$ according to both incident \B-cells $c_1,c_2$.
    \end{claimproof}

    \begin{claim*}
        For each \B-cell $c$, there is at least one crossing $x$ incident to~$ c$ such that both edge-segments incident to $c$ and $x$ are oriented outgoing from $x$.
    \end{claim*}
    \begin{claimproof}
        Assuming otherwise, the edge-segments would be oriented cyclically around $\partial c$.
        Consider two crossings $x_1,x_2$ that are \emph{opposite} along $c$ (do not belong to the same edge segment of $c$).
        The edges of the two (distinct) edge-segments of $c$ that are outgoing from $x_1,x_2$ have a common endpoint $u$, as $\Gamma$ is fan-crossing; see \cref{fig:cyclic-assignment}.
        The edges of the two edge-segments of $c$ that are outgoing from the remaining two opposite crossings $y_1, y_2$ behave symmetrically and share an endpoint $v$, which is distinct from $u$, as $\Gamma$ is simple.
        The four parts of the mentioned edges that join the vertices $u,v$ with the crossings $x_1,x_2,y_1,y_2$ are pairwise crossing-free since $\Gamma$ is simple.
        Hence, using these edge parts, we can obtain a planar drawing  of the bipartite graph $K_{3,3} - e$ (obtained from $K_{3,3}$ by removing an edge) so that the bipartition classes are $\{x_1,x_2,v\}$ and $\{y_1,y_2,u\}$ and where the four degree-3 vertices form a face.
        However, the unique\footnote{\label{footnote:K33}All planar embeddings of $K_{3,3}-e$ are combinatorially isomorphic since it is a subdivision of the $3$-connected complete graph $K_4$.} planar embedding of $K_{3,3}-e$ has no such face; see again \cref{fig:cyclic-assignment}.
    \end{claimproof}

    \begin{figure}[htb]
        \centering
        \includegraphics[page=2]{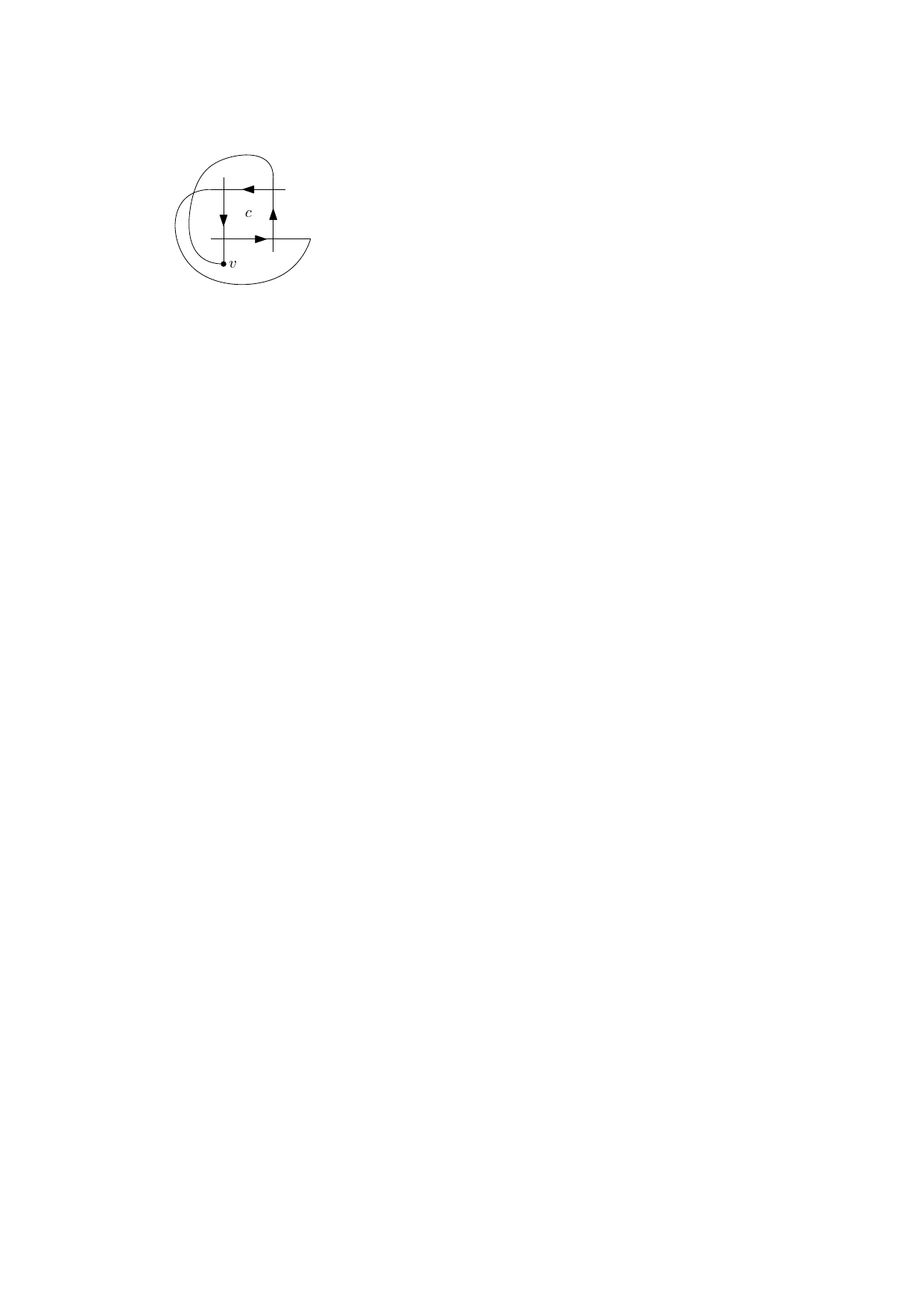}
        \caption{A cyclic orientation of a \B-cell leading to a double-crossing (left) or the unique\cref{footnote:K33} planar embedding of $K_{3,3}-e$ (right).}
        \label{fig:cyclic-assignment}
    \end{figure}

    Now for every \B-cell $c$, we set $\phi(c)$ to be a crossing~$x$ in $\partial c$ whose two edge-segments in $\partial c$ are oriented outgoing from~$x$.
    Moreover, by \cref{lem:A-cells-vs-X} for every \A-cell $c$, we can set $\phi(c)$ to be a crossing in $\partial c$ such that $\phi(c) \neq \phi(c')$ for any distinct \A-cells $c,c'$.
    
    \begin{claim*}
        The mapping $\phi \colon \calC_4 \to \calX$ is injective.
    \end{claim*}
    \begin{claimproof}
        For a \B-cell $c$ and a \A-cell or \B-cell $c'$ with $\phi(c) = x \in \partial c'$, we shall show $\phi(c') \neq x$.
        
        \begin{figure}[htb]
            \centering
            \includegraphics{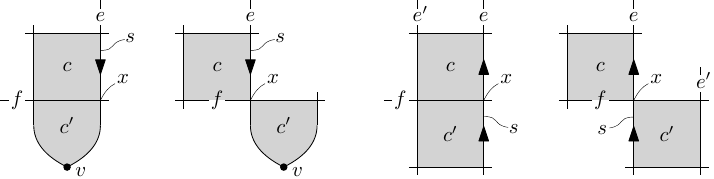}
            \caption{The four cases of a \B-cell $c$ sharing a crossing $x$ with a \A-cell or \B-cell $c'$.}
            \label{fig:injective}
        \end{figure}
        
        If $c'$ is a \A-cell, let $e$ be the edge that is incident to the vertex $v \in \partial c'$ and contains $x$.
        Further, let $f$ be the other edge at $x$ (containing the inner edge-segment of $c'$) and let $s$ be the edge-segment of $e$ in $\partial c$; see \cref{fig:injective}.
        Evidently, $v$ is the common endpoint of all edges crossing $f$.
        In particular, $s$ is oriented inwards at $x$, which is a contradiction to $x = \phi(c)$.
        
        If $c'$ is a \B-cell, let $s$ be an edge-segment that ends at $x$ and belongs to $\partial c'$, but not to $\partial c$.
        Let $e$ be the edge containing $s$, let $f$ be the other edge at $x$, and let $e'$ be the edge containing the edge-segment opposite of $s$ in $\partial c'$; see \cref{fig:injective}.
        As $\phi(c) = x$, the edge-segment of $e$ in $\partial c$ is oriented outwards at $x$ and towards the common endpoint of all edges crossing $f$.
        As $e$ and $e'$ cross $f$, edge-segment $s$ is oriented inwards at $x$ and thus $\phi(c') \neq x$.        
    \end{claimproof}
    
    Clearly, the last claim implies the desired $|\calC_4| \leq |\calX|$.
\end{proof}

Let us prove the edge density of $5n-10$ for connected simple fan-crossing graphs in a 
slightly stronger form.

\begin{theorem}\label{thm:fan-crossing}
    Let $\Gamma$ be a simple connected fan-crossing drawing of some graph $G = (V,E)$ with $|V| \geq 3$.
    Then 
    \[
        |E| \leq 5|V| - 10 - \sum_{c \in \calC_{\geq 5}}(\|c\|-5).
    \]
\end{theorem}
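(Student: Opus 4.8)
The plan is to apply the Density Formula with $t = 5$ (\cref{cor:density-formula-5}), which gives
\[
    |E| = 5|V| - 10 + 2|\calC_3| + |\calC_4| - |\calX| - \sum_{c \in \calC_{\geq 5}}(\|c\|-5).
\]
So it suffices to show $2|\calC_3| + |\calC_4| - |\calX| \leq 0$, i.e.\ $2|\calC_3| + |\calC_4| \leq |\calX|$. Since $\Gamma$ is simple, every simple drawing is non-homotopic and has no degenerate cells, so \cref{obs:types-of-cells} applies and $\calC_3$ is exactly the set of \C-cells and $\calC_4$ the set of \A-cells and \B-cells.

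The main work is to combine the structural lemmas already established for fan-crossing and non-homotopic drawings. First I would invoke \cref{lem:adj-crossing}, which says $|\calC_4| \leq |\calX|$, giving us an injection $\phi \colon \calC_4 \to \calX$. The remaining obstacle is to also account for the \C-cells (the $2|\calC_3|$ term) without spending crossings already used by $\phi$. Here I would look at \cref{lem:B-general}, which gives $|\calS_{\rm in}| \geq \#\text{\A-cells} + 2\cdot\#\text{\B-cells} + 3\cdot\#\text{\C-cells}$, together with \cref{obs:inner-segments}, which gives $|\calS_{\rm in}| = 2|\calX| - |E_x|$. Combining these yields
\[
    2|\calX| - |E_x| \geq \#\text{\A-cells} + 2\cdot\#\text{\B-cells} + 3\cdot\#\text{\C-cells} \geq 2|\calC_4| + 3|\calC_3| - \#\text{\A-cells},
\]
where the last step uses $\#\text{\A-cells} + \#\text{\B-cells} = |\calC_4|$ and $\#\text{\C-cells} = |\calC_3|$; and by \cref{lem:A-cells-vs-X} we have $\#\text{\A-cells} \leq |\calX|$, but that alone seems too weak. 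The cleaner route is probably to argue directly: a \C-cell is bounded entirely by inner edge-segments, whereas \A- and \B-cells have at least one outer edge-segment, so the counting in \cref{lem:B-general} already ``reserves'' segments for \C-cells separately. I expect the decisive inequality to be obtained by showing $2|\calC_3| + |\calC_4| \leq |\calX|$ via a charging scheme: use \cref{lem:adj-crossing} to handle $\calC_4$, and use the slack $|E_x| \geq$ (number of crossed edges, each contributing outer segments) in the inner-segment count to absorb the $2|\calC_3|$ term, noting each \C-cell forces three inner segments while each crossing only creates two new inner-segment endpoints net of the $|E_x|$ correction.

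The hard part will be making the double-counting of crossings airtight — i.e.\ verifying that the crossing assigned to a \C-cell (or to the $2|\calC_3|$ budget) is never the same crossing that $\phi$ assigns to a \A-cell or \B-cell, and that each \C-cell really can claim ``two'' crossings' worth of budget. I anticipate the paper resolves this by a unified argument that simultaneously maps all small cells (\A-, \B-, \C-) into $\calX$ with appropriate multiplicities, refining the proof of \cref{lem:adj-crossing}: extend the edge-segment orientation argument to \C-cells, observe that a \C-cell has three incident crossings all of whose incident edge-segments are inner, and show at least two of them can be charged to the \C-cell without conflicting with the \A-/\B-cell assignment. Once $2|\calC_3| + |\calC_4| \leq |\calX|$ is in hand, the theorem follows immediately from \cref{cor:density-formula-5} since $\sum_{c \in \calC_{\geq 5}}(\|c\|-5) \geq 0$, and it is kept on the right-hand side to obtain the stated stronger form.
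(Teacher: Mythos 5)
Your overall strategy is correct as far as it goes --- Density Formula with $t=5$ (\cref{cor:density-formula-5}) plus \cref{lem:adj-crossing} to get $|\calC_4|\leq|\calX|$ --- but there is a genuine gap: the term $2|\calC_3|$ is never actually disposed of. You correctly identify it as the remaining obstacle and then only speculate about a charging scheme (``I expect\dots'', ``I anticipate\dots'') in which each \C-cell would claim two crossings without colliding with the injection $\phi$ from \cref{lem:adj-crossing}; no such argument is carried out, and none is needed. The observation you are missing is that a simple fan-crossing drawing contains \emph{no} \C-cells at all: a \C-cell is bounded by inner edge-segments of three pairwise crossing edges $e,f,g$; since $f$ and $g$ both cross $e$, the fan-crossing condition forces $f$ and $g$ to share an endpoint, but then $f$ and $g$ are adjacent edges that cross, contradicting simplicity. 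Hence $|\calC_3|=0$, and the theorem follows immediately from \cref{cor:density-formula-5} together with $|\calC_4|\leq|\calX|$, which is exactly the paper's (two-line) proof.

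Your attempted detour through \cref{lem:B-general} and \cref{obs:inner-segments} cannot close the gap on its own: combining $|\calS_{\rm in}|+\#\text{\A-cells}\geq 2|\calC_4|+3|\calC_3|$ with $|\calS_{\rm in}|=2|\calX|-|E_x|$ and $\#\text{\A-cells}\leq|\calX|$ only yields $2|\calC_4|+3|\calC_3|\leq 3|\calX|-|E_x|$, which does not imply the required $|\calC_4|+2|\calC_3|\leq|\calX|$. So without the ``no \C-cells'' observation (or a genuinely new charging argument), the proof is incomplete.
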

\begin{proof}
    As every edge is crossed only by adjacent edges and adjacent edges do not cross ($\Gamma$ is simple), there are no \C-cells in $\Gamma$ and, hence, $|\calC_3| = 0$.
    Therefore \cref{cor:density-formula-5} (i.e., the Density Formula with $t=5$) immediately gives
    \[
        |E| = 5|V|-10+2|\calC_3|+|\calC_4|-|\calX| - \sum_{c \in \calC_{\geq 5}}(\|c\|-5) \leq 5|V|-10 - \sum_{c \in \calC_{\geq 5}}(\|c\|-5),
    \]
    where the last inequality uses \cref{lem:adj-crossing}.
\end{proof}

\subsection{Flaws in the Original Proofs from Related Work}
\label{sec:fan-planar-gap}

Recall that fan-planar graphs are a special case of fan-crossing graphs, defined by admitting drawings in $\mathbb{R}^2$ without configuration~I and~II (original definition~\cite{KU14-arxiv}), respectively without configurations~I,~II, and~III (revised definition~\cite{KaufUeck22});
cf.\ \cref{fig:fan-planar-mess}.
The proofs in~\cite{KU14-arxiv,KaufUeck22} involve a number of statements, each carefully analysing the possible routing of edges in a fan-planar drawing.
In the past decade, many papers on (generalizations of) fan-planar graphs appeared and many rely (implicitly or explicitly) on said statements.
As mentioned above, a flaw in one of the statements from~\cite{KU14-arxiv} was discovered~\cite{KKRS23}.
In this section, we will describe additional issues existing in both~\cite{KU14-arxiv} and~\cite{KaufUeck22}, thereby outlining why the previous proofs of the density bounds for fan-crossing, weakly fan-planar, and strongly fan-planar graphs are indeed incomplete.

\begin{figure}[htb]
    \centering
    \includegraphics{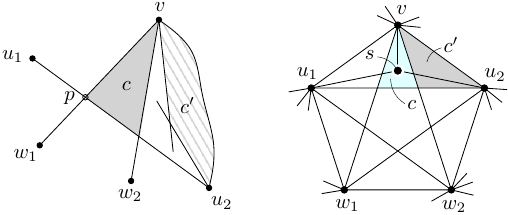}
    \caption{
        Left: Illustration of \cite[Corollary 5]{KaufUeck22} taken from the paper.
        Right: A counterexample.
    }
    \label{fig:fan-planar-problems}
\end{figure}

\begin{itemize}
    \item The authors try to guarantee \cite[Corollary 5]{KaufUeck22} that no cell of size~$4$ of any \emph{subdrawing} of a fan-planar drawing $\Gamma$ contains vertices of $G$.
    In fact, if $c$ is a \A-cell with incident vertex $v$ and inner edge-segment of an edge $u_1u_2$, and some set $S$ of vertices lies inside $c$, it is suggested to move the drawing of $G[S]$ to a cell $c'$ incident to an uncrossed edge $vu_1$ or $vu_2$ as illustrated in \cref{fig:fan-planar-problems}(left).
    However, in the particular situation of \cref{fig:fan-planar-problems}(right) with $S$ just being a single vertex $s$, moving $s$ into $c'$ would cause edge $u_1s$ to cross edge $vw_2$, which is already crossed by the independent edge $u_2w_1$; thus loosing fan-planarity.
    
    \item In a later proof \cite[Lemma 11]{KaufUeck22}, induction is applied to the induced subdrawing of an induced  subgraph~$G'$ of $G$.
    However throughout, the drawing $\Gamma$ was chosen to satisfy (i) having the maximum number of planar edges, and (ii) being inclusionwise edge-maximal with that property~\cite[Section 3]{KaufUeck22}.
    It is not shown or clear why the subdrawing for the induction still satisfies (i) and (ii).
    %
 \end{itemize}

\section{Quasiplanar Graphs}
\label{sec:quasiplanar}

The lower bounds for simple and non-homotopic quasiplanar graphs presented in this section are based on properties that tight examples must have that arise from a thorough reading of our upper bound proof, as provided in \cref{sec:quasiplanar-appendix} and \cref{sec:simple-quasiplanar-appendix}.
For instance, the removal of any vertex leaves a cell of size 2 in the non-homotopic case, while in the simple case, the uncrossed edges must form a matching.

\begin{theorem}\label{thm:LB-general-quasiplanar}
    For every $n \geq 4$, there exists a non-homotopic $n$-vertex connected quasiplanar graph with $8n-20$ edges.
\end{theorem}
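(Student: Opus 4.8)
The plan is to construct, for each $n \ge 4$, an explicit drawing realizing the bound, using the equality analysis of \cref{thm:general-quasiplanar} as a blueprint. Tracing that proof, a connected non-homotopic quasiplanar $n$-vertex drawing with $8n-20$ edges must satisfy all of the following at once: every edge is crossed (so $E_x = E$); \cref{lem:B-general} is tight, which forces the auxiliary graph $J$ to be a disjoint union of paths, each with exactly one endpoint incident to an \A-cell; and \cref{lem:non-homotopic-quasiplanar} is tight, which via \cref{lem:remove-vertex} forces $\|c_0\| = 2$ for the link $c_0$ of every vertex $v$ — that is, $c_0$ is a bigon bounded by two inner edge-segments — as well as $|\calC(v)| = \deg(v)$ and every cell of size at least $6$ being incident to exactly two distinct vertices. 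Thus in an extremal drawing the edges leaving any vertex $v$ form a tight fan whose first crossings all lie on the two edges bounding the bigon $c_0$. I would engineer the construction so that every vertex has precisely this local picture.

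Concretely, I would argue by induction on $n$. For the small base cases ($n \in \{4,5\}$, where $8n-20 \in \{12,20\}$ equals the number of edges of the complete multigraph obtained by doubling every edge of $K_n$) I would give explicit non-homotopic quasiplanar drawings with all edges crossed and bigon links. For the inductive step I would locate, in a drawing of the current type, a suitable region — a cell whose boundary alternates edge-segments and crossings in a controlled way, e.g.\ a \B-cell or a degenerate relative of one — insert a new vertex $v$ into it, and route eight new edges out of $v$, each crossing exactly the two edge-segments bounding that region, to eight suitably chosen existing vertices, arriving at each of them in a way that keeps that vertex's link a bigon. Since this adds one vertex and eight edges, and $8(n+1)-20 = (8n-20)+8$, the edge count is immediate; the real content is verifying that the new drawing is again connected, quasiplanar, non-homotopic, and of the same structural type, so that the induction carries through and still offers a region for the next insertion.

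The verifications fall into a few parts. Connectivity is clear since $v$ is joined to existing vertices. For quasiplanarity one must check that the eight new edges — which by construction cross only a small, prescribed set of old edges — never complete a triple of pairwise crossing edges, whether among themselves, together with old edges, or together with edges inserted at other steps; this dictates a careful choice of the insertion region and of the eight targets. For non-homotopy one must ensure that every lens created by the insertion (bounded by two parallel new edges, or by a new edge crossing an old edge twice) contains a vertex or a crossing in its interior — the construction must deliberately place such a witness inside each — and that no lens of the old drawing is emptied. The step I expect to be the main obstacle is reconciling these two requirements: non-homotopy pushes the new edges to cross heavily (to populate the lenses they create), while quasiplanarity forbids any three edges from pairwise crossing, so the routing must thread a narrow needle, and it must do so in a way uniform enough that the resulting drawing still contains the structural features needed for the next insertion. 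Designing the base drawings and pinning down the exact invariant to propagate — in particular, guaranteeing that the supply of insertable regions never runs out — is where the difficulty lies; the counting itself is trivial once the construction is fixed.
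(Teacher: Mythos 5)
Your proposal is a strategy outline rather than a proof, and the parts you defer are precisely the content of the theorem. The paper's own proof is a direct, explicit construction: an $n$-cycle drawn without crossings, all distance-$2$ chords drawn inside, all distance-$2$ chords drawn outside, all distance-$3$ chords crossing the cycle, and four zig-zag paths of $n-5$ edges each, giving $4n+4(n-5)=8n-20$ edges; one then checks quasiplanarity and non-homotopy of this one concrete drawing (with a separate small picture for $n=4$). Your tightness analysis of \cref{thm:general-quasiplanar} is a reasonable heuristic (and the paper's concluding remarks confirm that this kind of reverse-engineering is how the authors found their examples), but necessary conditions for extremality do not produce an object; you still must exhibit a drawing and verify it, and at no point do you do so.

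Concretely, the gaps are: (i) the base cases are only asserted --- in particular, that the doubled $K_5$ admits a non-homotopic quasiplanar drawing is itself a nontrivial claim requiring a picture or an argument; (ii) the inductive step never specifies the insertion region, the eight target vertices, or the routing. The tension you correctly identify is not resolved but is exactly where the proof lives: a new degree-$8$ vertex placed inside a single cell must reach eight distinct old vertices, and since a cell of the type you describe is incident to at most two or three vertices, most of the eight new edges must cross many old edges; any two old edges both crossed by one new edge must themselves be non-crossing (else three edges pairwise cross), while every lens formed by two of the eight new edges, or by a new edge double-crossing an old one, must contain a vertex or crossing. You state that the routing "must thread a narrow needle" and that guaranteeing a supply of insertable regions "is where the difficulty lies" --- which is an accurate self-assessment that the proof is not there. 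As written, the proposal does not establish the theorem.
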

\begin{proof}
    For $n = 4$, let us simply refer to the construction illustrated in \cref{fig:LB-general-quasiplanar}(top-left).

    \begin{figure}[htb]
        \centering
        \includegraphics{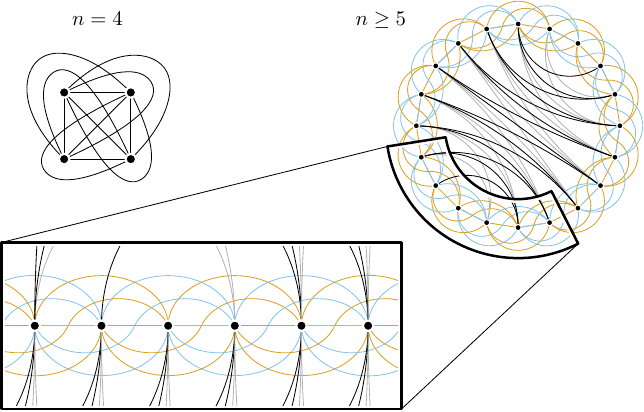}
        \caption{
            Illustrations of non-homotopic quasiplanar drawings with $n$ vertices and $8n-20$ edges.
            For better readability, the two zig-zag paths outside the cycle are omitted.
            The edge-coloring (works only for even $n$) just indicates four crossing-free sub-drawings, which helps to verify quasiplanarity.
        }
        \label{fig:LB-general-quasiplanar}
    \end{figure}

    For $n \geq 5$, the desired graph $G_n$ consists of (for illustrations refer to \cref{fig:LB-general-quasiplanar}(right))
    \begin{itemize}
        \item an $n$-vertex cycle $C$ drawn in a non-crossing way,\hfill ($n$ edges)
        \item an edge between any two vertices at distance~$2$ on $C$ drawn inside $C$,\hfill ($n$ edges)
        \item an edge between any two vertices at distance~$2$ on $C$ drawn outside $C$,\hfill ($n$ edges)
        \item an edge between any two vertices at distance~$3$ on $C$, starting inside $C$,\\
            crossing $C$ at distance~$1.5$, and ending outside $C$,\hfill ($n$ edges)
        \item a zig-zag path of edges drawn inside $C$ where\\
            the endpoints of each edge have distance at least~$3$ on $C$,\hfill ($n-5$ edges)
        \item another (different) zig-zag path of edges drawn inside $C$ where\\
            the endpoints of each edge have distance at least~$3$ on $C$,\hfill ($n-5$ edges)
        \item a zig-zag path of edges drawn outside $C$ where\\
            the endpoints of each edge have distance at least~$3$ on $C$,\hfill ($n-5$ edges)
        \item another (different) zig-zag path of edges drawn outside $C$ where\\
            the endpoints of each edge have distance at least~$3$ on $C$,\hfill ($n-5$ edges)
    \end{itemize}
    Thereby, all edges are drawn without unnecessary crossings.
    For example, two edges drawn inside $C$ cross only if the respective endpoints appear in alternating order around $C$.

    Evidently, $G_n$ has $n$ vertices and $8n-20$ edges, and it is straightforward to check that the described drawing of $G_n$ is non-homotopic and quasiplanar.
\end{proof}


\begin{theorem}\label{thm:LB-simple-quasiplanar}
    For every even $n \geq 8$, there exists a simple $n$-vertex quasiplanar graph with $6.5n-20$ edges.
\end{theorem}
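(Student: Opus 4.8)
The plan is to exhibit, for each sufficiently large even $n$, an explicit simple quasiplanar drawing with exactly $6.5n-20$ edges, built in the same spirit as the non-homotopic construction behind \cref{thm:LB-general-quasiplanar} but trimmed so that no two edges are parallel and no two adjacent edges cross. Concretely, I would place the $n$ vertices on a crossing-free cycle $C$ and add: all chords between vertices at cyclic distance $2$, routed inside $C$ ($n$ edges); all chords between vertices at cyclic distance $2$, routed outside $C$ ($n$ edges); all chords between vertices at cyclic distance $3$, each routed to cross $C$ exactly once ($n$ edges); and a carefully chosen collection of ``zig-zag'' paths whose steps join vertices at cyclic distance at least $4$, some drawn inside and some outside $C$, pairwise edge-disjoint on each side. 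Every edge is drawn without superfluous crossings, so two chords cross only when their endpoints strictly interleave around $C$ and both use the same side, or one of them crosses $C$.

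For the edge count, the cycle together with the three ``short'' families contributes $4n$ edges, so the zig-zag paths must be chosen to add exactly $2.5n-20$ more; since a single zig-zag path with steps of length $\ge 4$ visiting all $n$ vertices has about $n$ edges, this is achieved by two such long paths together with one path covering roughly half of the vertices, and it is precisely this ``half path'' that forces $n$ to be even (and also forces $n$ to be large enough for the construction to make sense, so that $6.5n-20$ does not exceed $\binom n2$). I would make the vertex sets of these paths explicit so that the total comes out to $n + n + n + n + \text{(zig-zag edges)} = 6.5n - 20$.

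It then remains to check that the drawing is simple and quasiplanar; simplicity is the genuinely new point compared with the non-homotopic case. Parallel edges are avoided because the only endpoint-pair that could be used by two different curves would be a distance-$3$ chord crossing $C$ versus a length-$3$ chord on one side, and all zig-zag steps have length $\ge 4$, while zig-zag paths on a common side are edge-disjoint by construction; crossing adjacent edges and edges crossing twice are ruled out by the ``no superfluous crossings'' routing, since adjacent chords on a common side meet only at their shared vertex. For quasiplanarity I would use the observation that it suffices to verify, for every edge $e$, that the edges crossing $e$ are pairwise crossing-free (otherwise two of them together with $e$ form a pairwise-crossing triple): a distance-$2$ chord inside $C$ is crossed only by its two neighbouring distance-$2$ chords inside $C$, which share a vertex and hence do not cross, together with a few passing zig-zag chords whose pairwise non-crossing follows from the interleaving rule; the distance-$3$ chords are made to cross $C$ precisely so that three of them cannot pairwise cross; and the remaining local checks for the other edge types use that the zig-zag paths are edge-disjoint on each side. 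The main obstacle is exactly this last bookkeeping: choosing which long chords to include and on which side so that all three requirements --- density exactly $6.5n-20$, simplicity, and no three pairwise crossing edges --- hold simultaneously; everything else is elementary counting and case checking.
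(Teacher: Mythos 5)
Your construction cannot be a simple drawing: by including, for every pair of vertices at cyclic distance $2$, one chord routed inside $C$ \emph{and} one chord routed outside $C$, you create $n$ pairs of parallel edges, and parallel edges are excluded from simple drawings (two edges may share at most one point). Your paragraph on simplicity only discusses potential coincidences between distance-$3$ chords, length-$3$ chords and zig-zag steps, and never notices this duplication. Once the duplication is removed, the ``short'' families contribute only $3n$ edges rather than $4n$, so the zig-zag paths would have to supply $3.5n-20$ edges rather than the $2.5n-20$ you budget for; the arithmetic (two long paths plus a half path) no longer matches, and with it your proposed explanation for why $n$ must be even disappears. Taking \emph{all} $n$ distance-$3$ chords is also problematic: consecutive ones such as $v_iv_{i+3}$ and $v_{i+1}v_{i+4}$ interleave and cross, and the inside portion of $v_iv_{i+3}$ is forced to cross the inside chord $v_{i+1}v_{i+3}$, with which it shares the endpoint $v_{i+3}$ --- a crossing of adjacent edges, again violating simplicity. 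Finally, you explicitly defer ``the main obstacle'' (the exact choice of long chords and the verification of quasiplanarity), so even apart from these errors the proposal is not a complete proof.

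The paper's construction addresses exactly these points by $2$-colouring the cycle alternately black and white (this is where evenness of $n$ enters, not a half-length path): only black--black distance-$2$ chords are drawn inside, only white--white distance-$2$ chords are drawn outside, and only the $n/2$ distance-$3$ chords from each white vertex to the black vertex three steps clockwise are kept. Thus no vertex pair is used twice, each of the three short families is halved to $n/2$ edges, and the short families contribute $n+3\cdot(n/2)=2.5n$ edges in total. The remaining $4n-20$ edges come from four full zig-zag paths (two inside, two outside, each with $n-5$ edges whose steps have length at least $3$ on $C$), giving exactly $6.5n-20$. In short, the halving via the $2$-colouring is the essential new idea needed to pass from the non-homotopic bound $8n-20$ to the simple bound $6.5n-20$, and it is missing from your proposal.
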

\begin{proof}
    Our construction is a subgraph of the corresponding graph in the proof of \cref{thm:LB-general-quasiplanar};
    see \cref{fig:LB-simple-quasiplanar} for an illustration.
    For every even $n \geq 8$, the desired simple quasiplanar graph $G_n$ 
    is missing all the orange edges depicted in \cref{fig:LB-general-quasiplanar} except the ones at distance 1, i.e.
    \begin{itemize}
        \item the edges between any two black vertices at distance~$2$ on $C$\\
            drawn inside $C$,\hfill ($n/2$ edges)
        \item the edges between any two white vertices at distance~$2$ on $C$\\
            drawn outside $C$,\hfill ($n/2$ edges)
        \item the edges from each black vertex to its white vertex clockwise at distance~$3$\\
            on $C$, starting inside $C$, crossing $C$ at distance~$1.5$, and ending outside $C$,\hfill ($n/2$ edges)
    \end{itemize}
    As $n \geq 8$, the four zig-zag paths can be chosen without introducing parallel edges.
    Again, all edges are drawn without unnecessary crossings.
    For example, two edges drawn inside $C$ cross only if the respective endpoints appear in alternating order around $C$.

    \begin{figure}[htb]
        \centering
        \includegraphics[page=2]{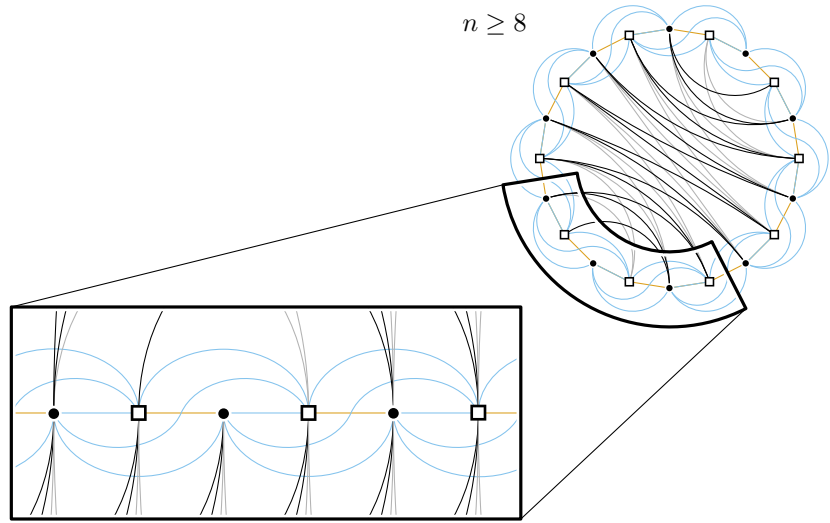}
        \caption{
            Illustration of simple quasiplanar drawings with $n$ vertices and $6.5n-20$ edges, for even $n \geq 8$.
            For better readability, the two zig-zag paths outside the cycle are omitted.
            The edge-coloring just indicates four crossing-free sub-drawings, which helps to verify quasiplanarity.
        }
        \label{fig:LB-simple-quasiplanar}
    \end{figure}

    Evidently, $G_n$ has $n$ vertices and $6.5n-20$ edges and it is straightforward to check that the described drawing of $G_n$ is simple and quasiplanar.    
\end{proof}

\section{Concluding Remarks}
\label{sec:conclusions}

Some previously known proofs 
already contain ideas that are similar to (parts of) our approach.
Often times, this is phrased in terms of a discharging argument, instead of a direct counting.
For example, some discharging steps in~\cite{AckeTard07},~\cite{Ack09},~\cite{Ack19-k-planar}, and~\cite{AckKes23} (dealing with $k$-planar, so-called $k$-quasiplanar graphs, and fan-crossing graphs) directly correspond to our proof of \cref{lem:B-general}.
In these four cases, but also in~\cite{DBLP:conf/wg/BinucciBDHKLMT23}, the total sum of all charges is $\sum_{c \in \calC}(\|c\|-4)$ (although stated a bit differently).
In~\cite{AnBeFoKa20}, which concerns $1$-bend RAC-graphs, there is a charging involving the convex bends.
Further, the concept of the size of a cell and the quantity $\sum_{c \in \calC}(\|c\|-5)$ already appear in the papers~\cite{KU14-arxiv,KaufUeck22} on fan-planar graphs.
But with the Density Formula, we have a unified approach that somewhat unveiled the essential tasks in this field of research.
A valuable asset of our approach are very streamlined and clean combinatorial arguments, as well as substantially shorter proofs, as certified by the number of beyond-planar graph classes that we can treat in about 20 pages.

Additionally, it is straightforward to derive from the particular application of the Density Formula properties that must be fulfilled by all tight examples.
For example, from our proof for $k^+$-real face graphs (in \cref{sec:k-real-face-graphs}), we immediately see that all tight examples of $k^+$-real face graphs with $k\geq 3$ are planar.
Similarly, from our proof for $2$-planar graphs (in \cref{sec:k-planar}), we see that no tight example of a $2$-planar graph has a \C-cell or \B-cell.
And (together with a short calculation) our proof for quasiplanar graphs (in \cref{sec:simple-quasiplanar-appendix}) implies that in all tight examples of simple quasiplanar graphs the planar edges form a perfect matching.
Specifically, this approach of analysing the situation in which the proof with the Density Formula is tight, allowed us to find the first tight examples for simple and non-homotopic quasiplanar graphs (cf.~\cref{thm:LB-general-quasiplanar,thm:LB-simple-quasiplanar}).


The only cases presented here in which upper and lower bounds still differ by an absolute constant are $k$-bend RAC-graphs; cf.~\cref{tab:overview}.
This is due to the fact that these are drawings in the plane $\mathbb{R}^2$ and the unbounded cell behaves crucially different from all other cells.
It is possible to reduce our upper bounds by an absolute constant by a separate analysis of the unbounded cell, but we did not pursue this here.
On the other hand, it may well be that our bounds are already optimal for $k$-bends RAC-drawings on the sphere~$\mathbb{S}^2$ (for the natural definition of this concept)
 ---they definitely are for $k=0$.

Finally for open problems, there is a number of beyond-planar graph classes for which the exact asymptotics of their edge density is not known yet.
This includes for example non-homotopic fan-crossing graphs, $k$-quasiplanar graphs for $k \geq 4$, and $k$-planar graphs for $k \geq 4$.
Let us refer again to the survey~\cite{DLM19-survey} from 2019 for more such cases and more beyond-planar graph classes in general.
Additionally, each class could be considered in a ``bipartite variant'' (as we do for fan-crossing graphs in \cref{thm:bip-fan-crossing}) and/or an ``outer variant'' where one additionally requires that there is one cell that is incident to every vertex; see for example~\cite{ABKPU18}. 


\newpage

\bibliographystyle{plainurl}
\bibliography{lit}

\begin{thebibliography}{10}

\bibitem{Ack09}
Eyal Ackerman.
\newblock On the maximum number of edges in topological graphs with no four
  pairwise crossing edges.
\newblock {\em Discrete \& Computational Geometry}, 41(3):365--375, 2009.
\newblock \href {https://doi.org/10.1007/s00454-009-9143-9}
  {\path{doi:10.1007/s00454-009-9143-9}}.

\bibitem{Ack19-k-planar}
Eyal Ackerman.
\newblock On topological graphs with at most four crossings per edge.
\newblock {\em Computational Geometry}, 85:101574, 2019.
\newblock \href {https://doi.org/10.1016/j.comgeo.2019.101574}
  {\path{doi:10.1016/j.comgeo.2019.101574}}.

\bibitem{AckKes23}
Eyal Ackerman and Bal\'azs Keszegh.
\newblock The maximum size of adjacency-crossing graphs.
\newblock arXiv preprint, 2023.
\newblock URL: \url{https://arxiv.org/abs/2309.06507}, \href
  {http://arxiv.org/abs/2309.06507} {\path{arXiv:2309.06507}}.

\bibitem{AckeTard07}
Eyal Ackerman and G\'abor Tardos.
\newblock On the maximum number of edges in quasi-planar graphs.
\newblock {\em Journal of Combinatorial Theory, Series A}, 114(3):563--571,
  2007.
\newblock \href {https://doi.org/10.1016/j.jcta.2006.08.002}
  {\path{doi:10.1016/j.jcta.2006.08.002}}.

\bibitem{AnBeFoKa20}
Patrizio Angelini, Michael~A. Bekos, Henry Förster, and Michael Kaufmann.
\newblock On {RAC} drawings of graphs with one bend per edge.
\newblock {\em Theoretical Computer Science}, 828-829:42--54, 2020.
\newblock \href {https://doi.org/10.1016/j.tcs.2020.04.018}
  {\path{doi:10.1016/j.tcs.2020.04.018}}.

\bibitem{ABKKPU23}
Patrizio Angelini, Michael~A. Bekos, Julia Katheder, Michael Kaufmann,
  Maximilian Pfister, and Torsten Ueckerdt.
\newblock Axis-parallel right angle crossing graphs.
\newblock In Inge~Li G{\o}rtz, Martin Farach-Colton, Simon~J. Puglisi, and
  Grzegorz Herman, editors, {\em 31st Annual European Symposium on Algorithms
  (ESA 2023)}, volume 274 of {\em Leibniz International Proceedings in
  Informatics (LIPIcs)}, pages 9:1--9:15, Dagstuhl, Germany, 2023. Schloss
  Dagstuhl -- Leibniz-Zentrum f{\"u}r Informatik.
\newblock \href {https://doi.org/10.4230/LIPIcs.ESA.2023.9}
  {\path{doi:10.4230/LIPIcs.ESA.2023.9}}.

\bibitem{ABKPU18}
Patrizio Angelini, Michael~A. Bekos, Michael Kaufmann, Maximilian Pfister, and
  Torsten Ueckerdt.
\newblock Beyond-planarity: {T}ur{\'a}n-type results for non-planar bipartite
  graphs.
\newblock In {\em 29th International Symposium on Algorithms and Computation
  (ISAAC 2018)}, volume 123, pages 28:1--28:13, 2018.
\newblock \href {https://doi.org/10.4230/LIPIcs.ISAAC.2018.28}
  {\path{doi:10.4230/LIPIcs.ISAAC.2018.28}}.

\bibitem{DBLP:journals/comgeo/ArikushiFKMT12}
Karin Arikushi, Radoslav Fulek, Bal{\'{a}}zs Keszegh, Filip Moric, and Csaba~D.
  T{\'{o}}th.
\newblock Graphs that admit right angle crossing drawings.
\newblock {\em Comput. Geom.}, 45(4):169--177, 2012.
\newblock URL: \url{https://doi.org/10.1016/j.comgeo.2011.11.008}, \href
  {https://doi.org/10.1016/J.COMGEO.2011.11.008}
  {\path{doi:10.1016/J.COMGEO.2011.11.008}}.

\bibitem{bekos24-k-planar}
Michael~A. Bekos, Prosenjit Bose, Aaron Büngener, Vida Dujmović, Michael
  Hoffmann, Michael Kaufmann, Pat Morin, Saeed Odak, and Alexandra Weinberger.
\newblock On $k$-planar graphs without short cycles.
\newblock arXiv preprint, 2024.
\newblock URL: \url{https://arxiv.org/abs/2408.16085}, \href
  {http://arxiv.org/abs/2408.16085} {\path{arXiv:2408.16085}}.

\bibitem{DBLP:conf/wg/BinucciBDHKLMT23}
Carla Binucci, Giuseppe~Di Battista, Walter Didimo, Seok{-}Hee Hong, Michael
  Kaufmann, Giuseppe Liotta, Pat Morin, and Alessandra Tappini.
\newblock Nonplanar graph drawings with k vertices per face.
\newblock In Dani{\"{e}}l Paulusma and Bernard Ries, editors, {\em
  Graph-Theoretic Concepts in Computer Science - 49th International Workshop,
  {WG} 2023, Fribourg, Switzerland, June 28-30, 2023, Revised Selected Papers},
  volume 14093 of {\em Lecture Notes in Computer Science}, pages 86--100.
  Springer, 2023.
\newblock \href {https://doi.org/10.1007/978-3-031-43380-1\_7}
  {\path{doi:10.1007/978-3-031-43380-1\_7}}.

\bibitem{Brandenburg20}
Franz~J. Brandenburg.
\newblock On fan-crossing graphs.
\newblock {\em Theoretical Computer Science}, 841:39--49, 2020.
\newblock \href {https://doi.org/10.1016/j.tcs.2020.07.002}
  {\path{doi:10.1016/j.tcs.2020.07.002}}.

\bibitem{CKPRU21}
Steven Chaplick, Fabian Klute, Irene Parada, Jonathan Rollin, and Torsten
  Ueckerdt.
\newblock Edge-minimum saturated k-planar drawings.
\newblock In Helen~C. Purchase and Ignaz Rutter, editors, {\em Graph Drawing
  and Network Visualization}, pages 3--17, Cham, 2021. Springer International
  Publishing.
\newblock \href {https://doi.org/10.1007/978-3-030-92931-2_1}
  {\path{doi:10.1007/978-3-030-92931-2_1}}.

\bibitem{CFKPS23}
Otfried Cheong, Henry F{\"o}rster, Julia Katheder, Maximilian Pfister, and Lena
  Schlipf.
\newblock Weakly and strongly fan-planar graphs.
\newblock In {\em Graph Drawing and Network Visualization}, pages 53--68.
  Springer Nature Switzerland, 2023.
\newblock \href {http://arxiv.org/abs/2308.08966} {\path{arXiv:2308.08966}},
  \href {https://doi.org/https://doi.org/10.1007/978-3-031-49272-3_4}
  {\path{doi:https://doi.org/10.1007/978-3-031-49272-3_4}}.

\bibitem{DEL11-RAC}
Walter Didimo, Peter Eades, and Giuseppe Liotta.
\newblock Drawing graphs with right angle crossings.
\newblock {\em Theoretical Computer Science}, 412(39):5156--5166, 2011.
\newblock \href {https://doi.org/10.1016/j.tcs.2011.05.025}
  {\path{doi:10.1016/j.tcs.2011.05.025}}.

\bibitem{DLM19-survey}
Walter Didimo, Giuseppe Liotta, and Fabrizio Montecchiani.
\newblock A survey on graph drawing beyond planarity.
\newblock {\em ACM Computing Surveys}, 52(1), 2019.
\newblock \href {https://doi.org/10.1145/3301281} {\path{doi:10.1145/3301281}}.

\bibitem{KU14-arxiv}
Michael Kaufmann and Torsten Ueckerdt.
\newblock The density of fan-planar graphs.
\newblock arXiv preprint, 2014.
\newblock URL: \url{https://arxiv.org/abs/1403.6184v1}, \href
  {http://arxiv.org/abs/1403.6184} {\path{arXiv:1403.6184}}.

\bibitem{KaufUeck22}
Michael Kaufmann and Torsten Ueckerdt.
\newblock The density of fan-planar graphs.
\newblock {\em Electronic Journal of Combinatorics}, 29(1):P1.29, 2022.
\newblock \href {https://doi.org/10.37236/10521} {\path{doi:10.37236/10521}}.

\bibitem{KKRS23}
Boris Klemz, Kristin Knorr, Meghana~M. Reddy, and Felix Schr{\"{o}}der.
\newblock Simplifying non-simple fan-planar drawings.
\newblock {\em J. Graph Algorithms Appl.}, 27(2):147--172, 2023.
\newblock Conference version in Proc.\ GD 2021
  (\href{https://doi.org/10.1007/978-3-030-92931-2\_4}{\texttt{doi:10.1007/978-3-030-92931-2\_4}}).
\newblock \href {https://doi.org/10.7155/JGAA.00618}
  {\path{doi:10.7155/JGAA.00618}}.

\bibitem{Pac91}
J\'anos Pach.
\newblock {\em Notes on geometric graph theory}, volume~6 of {\em DIMACS
  Series}, pages 273--285.
\newblock American Mathematical Society, Providence, RI, 1991.

\bibitem{PT97-k-planar}
J{\'a}nos Pach and G{\'e}za T{\'o}th.
\newblock Graphs drawn with few crossings per edge.
\newblock {\em Combinatorica}, 17(3):427--439, 1997.
\newblock \href {https://doi.org/10.1007/BF01215922}
  {\path{doi:10.1007/BF01215922}}.

\bibitem{Rin65-1-planar}
Gerhard Ringel.
\newblock Ein {S}echsfarbenproblem auf der {K}ugel.
\newblock {\em Abhandlungen aus dem Mathematischen Seminar der Universit{\"a}t
  Hamburg}, 29(1-2):107--117, 1965.
\newblock \href {https://doi.org/10.1007/BF02996313}
  {\path{doi:10.1007/BF02996313}}.

\bibitem{CToth23}
Csaba~D. T{\'o}th.
\newblock On {RAC} drawings of graphs with two bends per edge.
\newblock In {\em Graph Drawing and Network Visualization}, pages 69--77.
  Springer Nature Switzerland, 2023.
\newblock \href {http://arxiv.org/abs/2308.02663} {\path{arXiv:2308.02663}},
  \href {https://doi.org/10.1007/978-3-031-49272-3_5}
  {\path{doi:10.1007/978-3-031-49272-3_5}}.

\end{thebibliography}

\newpage
\section{Appendix}
\label{sec:appendix}

\subsection{\texorpdfstring{$\boldsymbol{k}$}{k}-Bend RAC-Graphs}
\label{sec:k-bend-RAC-graphs-appendix}

For completeness, let us also apply the Density Formula to $0$-bend RAC-graphs.
As edges are straight segments, every $0$-bend RAC-drawing is simple and contains no degenerate cells.

\begin{theorem}\label{thm:0-bend-RAC}
    For every $n \geq 3$, every $n$-vertex connected $0$-bend RAC-graph $G$ has at most $4n-8$ edges.
\end{theorem}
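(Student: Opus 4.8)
The plan is to invoke the Density Formula in the form of \cref{cor:density-formula-4}, so that it suffices to prove
\[
    \frac{7}{4}|\calC_3| + |\calC_4| + \frac{1}{4}|\calC_5| \le |\calX|
\]
for a $0$-bend RAC drawing $\Gamma$ of $G$; then $|E| \le 4|V|-8$ follows at once. First I would record the structural facts. Because every edge is a straight-line segment, $\Gamma$ is simple: two distinct segments meet in at most one point, so no two edges cross twice and adjacent edges meet only at their common vertex, and the uniqueness of the segment between two given points (with the standing no-touching convention) rules out parallel edges. As already observed, $\Gamma$ moreover has no degenerate cells; it is connected because $G$ is; and $|V| \ge 3$. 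Hence \cref{obs:types-of-cells} applies: $\calC_3$ consists of \C-cells and $\calC_4$ of \A-cells and \B-cells.

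Note that \cref{lem:4n-8-more-general} does not apply directly: $0$-bend RAC drawings may contain \B-cells (take an axis-parallel rectangle bounded by four edges) and may contain size-$5$ cells (a unit square with a vertex at one corner). So I would instead use right angles only to remove the two ``expensive'' small-cell types --- \A-cells and \C-cells --- and then charge what survives to the crossings. Since $\Gamma$ has no degenerate cells, every cell of size at most $5$ is a straight-sided polygon each of whose corners is a vertex of $G$ or a crossing, and at every crossing-corner two \emph{distinct} edges meet perpendicularly (two consecutive sides along a single edge would be collinear, hence not a corner). Thus along the boundary of such a cell the directions of two sides meeting at a crossing-corner are perpendicular. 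A \C-cell would then be a triangle whose three sides are perpendicular consecutively around the $3$-cycle --- impossible, since an odd cycle of perpendicularities does not close up. A \A-cell would be a triangle with one corner a vertex $v$, its two sides at $v$ lying on edges $e, e'$ incident to $v$, and each of $e, e'$ perpendicular to the edge $f$ carrying the third side; then $e \parallel e'$, and since $e, e'$ share the point $v$ they are either equal --- forcing a repeated corner, i.e.\ a degenerate cell --- or collinear in opposite directions, in which case $f$, being perpendicular to their common line, would meet that line (hence the cell boundary) at a single point rather than at the two distinct corners $x_1, x_2$. Either way we reach a contradiction, so there are no \A-cells.

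It follows that $\calC_3 = \emptyset$, that every cell of $\calC_4$ is a \B-cell and so is a quadrilateral with exactly four crossing-incidences, and that every cell of $\calC_5$ has at least one crossing-incidence (a size-$5$ cell necessarily has an odd, hence positive, number of crossing-incidences). For the charging step I would use that at each crossing the four emanating edge-segments cut a small disk into four sectors, each contained in exactly one cell; since no cell occupies two (necessarily opposite) sectors of the same crossing --- that cell would be degenerate --- the total number of crossing-incidences over all cells of $\Gamma$ equals $4|\calX|$. Restricting this sum to the pairwise distinct cells of $\calC_4 \cup \calC_5$ yields $4|\calC_4| + |\calC_5| \le 4|\calX|$, i.e.\ $|\calC_4| + \frac{1}{4}|\calC_5| \le |\calX|$, which together with $\calC_3 = \emptyset$ is exactly the displayed inequality; \cref{cor:density-formula-4} then gives $|E| \le 4|V|-8$.

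None of the individual estimates is hard; the part that needs care is the case analysis over the small-cell types --- in particular leaning on \cref{obs:types-of-cells} to know that the only size-$4$ cells are \A- and \B-cells (so that killing \A-cells leaves only the well-understood \B-cells), being careful about degenerate versus non-degenerate cells when counting sectors at crossings, and making sure the arguments cover the unbounded cell of the plane drawing. For this last point I would deliberately phrase the ``odd cycle of perpendicularities'' and ``$f$ meets a line only once'' arguments so that they apply verbatim to an unbounded cell as well, rather than using the cleaner but plane-bounded fact that the interior angles of a triangle sum to $180^\circ$. I expect this bookkeeping, and not any computation, to be the only real obstacle.
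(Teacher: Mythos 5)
Your proof is correct, and while it shares the paper's skeleton (apply \cref{cor:density-formula-4} with $t=4$ and use the right angles to kill the expensive small cells), the final charging step is genuinely different from the paper's. The paper also rules out \F-cells (the size-$5$ cells with five crossing-incidences, again by the odd-cycle-of-perpendicularities argument), so that $\calC_5$ consists only of \D- and \E-cells; it then bounds $|\calC_5|\le 2|E_x|$ by counting outer edge-segments of crossed edges, and bounds $\#\text{\B-cells}\le|\calX|-\tfrac12|E_x|$ via \cref{lem:B-general} and \cref{obs:inner-segments}, so that the two error terms $\pm\tfrac12|E_x|$ cancel. You instead charge every surviving small cell directly to its crossing-incidences: each \B-cell carries four, each size-$5$ cell carries an odd (hence positive) number by the parity identity $\|c\|=2v+x$, and the total over all cells is $4|\calX|$, giving $4|\calC_4|+|\calC_5|\le 4|\calX|$ in one stroke. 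Your route is a bit more local and avoids both the \F-cell exclusion and the detour through inner/outer edge-segments; the paper's route reuses \cref{lem:B-general}, which it needs elsewhere anyway. Both are complete, and your attention to the unbounded cell (phrasing the perpendicularity arguments so they do not rely on interior angle sums) is exactly the care this plane-drawing case requires.
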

\begin{proof}
    Let $\Gamma$ be a $0$-bend RAC-drawing of $G = (V,E)$.
    As every crossing has a right angle, we have no \A-cells, no \C-cells, and no \F-cells.
    Hence $\calC_3 = \emptyset$, $\calC_4$ is the set of all \B-cells, and $\calC_5$ is the set of all \D-cells and all \E-cells.
    Now, by \cref{obs:inner-segments} we have $2|\calX| - |E_x| = |\calS_{\rm in}|$ and by \cref{lem:B-general} we have $|\calS_{\rm in}| \geq 2 \cdot \#\text{\B-cells}$.
    Hence
    \begin{equation}
        \#\text{\B-cells} - |\calX| \leq -\frac12 |E_x|.\label{eq:RAC-ineq1}
    \end{equation}
    Moreover, every \D-cell and every \E-cell has exactly two incident outer edge-segments of some crossed edge.
    As every crossed edge has exactly two outer edge-segments and every edge-segment has exactly two incident cells, it follows that
    \begin{equation}
        2|\calC_5| = 2\#\text{\D-cells} + 2\#\text{\E-cells} \leq 4|E_x|.\label{eq:RAC-ineq2}
    \end{equation}
    Thus, the Density Formula with $t = 4$ (\cref{cor:density-formula-4}) immediately gives
    \begin{align*}
        |E| &\leq 4|V|-8 + \frac74|\calC_3|+|\calC_4|+\frac14|\calC_5|-|\calX| = 4|V|-8 + \#\text{\B-cells}+\frac14|\calC_5|-|\calX|\\
            &\overset{\eqref{eq:RAC-ineq2}}{\leq} 4|V|-8 + \#\text{\B-cells} + \frac12|E_x| - |\calX| \overset{\eqref{eq:RAC-ineq1}}{\leq} 4|V|-8.\qedhere
    \end{align*}
\end{proof}

\subsection{Bipartite Fan-Crossing Graphs}
\label{sec:bipartite-fan-crossing}

Using lemmas from~\cite{KU14-arxiv}, the authors of~\cite{ABKPU18} proved in 2018 that simple $n$-vertex bipartite strongly fan-planar graphs have at most $4n-12$ edges while providing a lower bound example with $4n-16$ edges.
The same upper bound of $4n-12$ is claimed for weakly fan-planar graphs, again by reducing to the strongly fan-planar case~\cite{CFKPS23}.
For non-homotopic bipartite fan-planar graphs, there is a $4n-12$ lower bound~\cite{ABKPU18} but no upper bound. 


In this section, we prove that simple connected $n$-vertex bipartite fan-crossing graphs have at most $4n-10$ edges. 
As the upper bounds in~\cite{ABKPU18} and~\cite{CFKPS23} rely on incorrect statements in~\cite{KU14-arxiv} and~\cite{KaufUeck22}, our result also provides the first complete proof for the special case of bipartite weakly and strongly fan-planar graphs.

\begin{lemma}\label{lem:bipartite-fan-crossing}
    Let $\Gamma$ be a simple connected fan-crossing drawing of a bipartite graph $G = (V,E)$ with $|V| \geq 3$.
    Then $|V| \leq \sum_{c \in \calC_{\geq 5}} (\|c\|-5)$.
\end{lemma}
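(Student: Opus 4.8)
The plan is to use the Density Formula with $t = 4$ in the form of \cref{cor:density-formula-4}, exactly mirroring the structure of \cref{thm:fan-crossing} but exploiting bipartiteness to gain one more unit per vertex. Since $\Gamma$ is simple and fan-crossing, there are no \C-cells, so $|\calC_3| = 0$. \cref{cor:density-formula-4} then says
\[
    |E| \leq 4|V| - 8 + |\calC_4| + \tfrac14|\calC_5| - |\calX| - \sum_{c \in \calC_{\geq 6}}\Bigl(\tfrac34\|c\|-4\Bigr).
\]
First I would combine this with \cref{thm:fan-crossing}, which gives $|E| \leq 5|V| - 10 - \sum_{c \in \calC_{\geq 5}}(\|c\|-5)$; subtracting, or rather arguing directly, it suffices to show that the ``slack'' between the $4n$-bound and the $5n$-bound is at least $|V|$. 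Concretely, the goal reduces to proving
\[
    |V| \leq |E| - (4|V| - 10) \leq |\calC_4| + \tfrac14|\calC_5| - |\calX| + \bigl(\text{correction from }\calC_{\geq 5}\bigr),
\]
so the heart of the matter is a lower bound on something like $|\calC_4| - |\calX|$ in the bipartite setting — note \cref{lem:adj-crossing} only gives the \emph{upper} bound $|\calC_4| \leq |\calX|$, which goes the wrong way, so bipartiteness must be used to bound small cells from below.

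The key structural input is that in a bipartite simple drawing every cell has \emph{even} size in terms of real vertices along its boundary, and more importantly that \B-cells and \A-cells are constrained: a \A-cell has a single real vertex $v$ on its boundary, two incident crossings, and the two outer edge-segments at $v$ belong to edges incident to $v$; a non-degenerate \B-cell in a bipartite fan-crossing drawing has the two opposite edge-segments oriented (as in the proof of \cref{lem:adj-crossing}) toward a common endpoint, and bipartiteness forces the two such common endpoints of the two opposite pairs to lie in opposite color classes. The plan is to set up a charging argument where each vertex $v \in V$ receives a charge of $1$ distributed among the small cells incident to $v$ and the excess $\sum_{c \in \calC_{\geq 5}}(\|c\|-5)$, showing the total charge placed is at most $\sum_{c \in \calC_{\geq 5}}(\|c\|-5)$. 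A cleaner route, which I would try first, is via \cref{lem:remove-vertex}: for each vertex $v$, its link $c_0$ satisfies $\|c_0\| = \sum_{c \in \calC(v)}(\|c\|-5) + |\calC(v)|$, and because $G$ is bipartite and $\Gamma$ fan-crossing, the link $c_0$ cannot be too small — I expect one can show $\|c_0\| \geq |\calC(v)| + 1$, i.e. $\sum_{c \in \calC(v)}(\|c\|-5) \geq 1$, because a link of size equal to $|\calC(v)|$ would force all cells around $v$ to have size exactly $5$, and a \D/\E/\F-configuration completely surrounding a vertex in a bipartite fan-crossing drawing leads to a forbidden short cycle or a triangle-crossing.

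Summing $\sum_{c \in \calC(v)}(\|c\|-5) \geq 1$ over all $v \in V$ and accounting for the fact that each cell $c$ is counted once for each of its vertex-incidences (which is at most $\|c\|$, and only cells in $\calC_{\geq 5}$ contribute positively after the per-vertex bound) would then yield $|V| \leq \sum_{c \in \calC_{\geq 5}}(\|c\|-5)$, as each cell $c$ of size $\|c\| \geq 5$ can be charged by at most $\|c\| - 5 + (\text{something})$ vertices — here one must be careful that a cell of size exactly $5$ (a \D-, \E-, or \F-cell) contributes $0$ to the right-hand side yet might be incident to a vertex, so the real content is showing such degenerate-looking configurations around a single vertex are ruled out by bipartiteness. \textbf{The main obstacle} I anticipate is precisely this bookkeeping: ensuring that cells of size $5$ incident to a vertex $v$ are ``paid for'' by strictly larger cells also incident to $v$, which requires a local case analysis of the cyclic sequence of cells around $v$ in a bipartite fan-crossing drawing — showing that one cannot have a full rotation of size-$5$ cells around $v$, analogous to (but simpler than) the cyclic-orientation argument in \cref{lem:adj-crossing}, using that the shortest cycle through $v$ in $G$ has length at least $4$ and that edges crossing a common edge share an endpoint.
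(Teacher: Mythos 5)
Your proposal does not close the argument; the gap is in the final summation step. Suppose you did establish the per-vertex inequality $\sum_{c \in \calC(v)}(\|c\|-5) \geq 1$. Summing over all $v \in V$ yields $\sum_{c \in \calC} n(c)\,(\|c\|-5) \geq |V|$, where $n(c)$ is the number of vertex-incidences of $c$. This is \emph{not} the claimed bound $\sum_{c \in \calC_{\geq 5}}(\|c\|-5) \geq |V|$: a cell of size at least $6$ that is incident to many vertices has its excess $\|c\|-5$ counted $n(c)$ times on the left but only once on the right, so the inequality you obtain is strictly weaker and does not imply the lemma (already a planar bipartite drawing, e.g.\ a hexagonal grid, shows the two quantities can differ by a large factor). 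The difficulty is therefore not only the size-$5$ cells you flag, but the multiply-counted large cells. On top of this, the per-vertex inequality itself is only conjectured (``I expect one can show''), and it is not the statement one actually needs.

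The paper's proof resolves exactly this bookkeeping problem by constructing an explicit map $\phi\colon V \to \calC$ assigning each vertex to an incident cell so that each cell $c$ is used at most $\|c\|-5$ times; the lemma then follows by counting the image with multiplicity rather than by summing link sizes. Making this work requires a sharper local statement than yours: after orienting every edge-segment toward the black endpoint of its edge (close in spirit to your observation about \B-cells), one classifies the cells with at most four edge-segment-incidences around a black vertex, rules out certain configurations using bipartiteness, simplicity, and the fan-crossing property, and proves that every vertex is incident either to a cell with at least five edge-segment-incidences or to at least \emph{two} cells of size at least $6$. The ``two cells'' alternative is essential, because a cell of size at least $6$ with at most four edge-segment-incidences has $\|c\|-4$ vertex-incidences but may only absorb $\|c\|-5$ vertices, so one vertex per such cell must be pushed elsewhere; a greedy completion then finishes the assignment. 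Your plan, as written, has no mechanism playing this role, so the approach would fail as stated even granting the unproven local claim.
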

\begin{proof}
    First, as $\Gamma$ is simple and $|V| \geq 3$, there are no degenerate cells with at most four edge-segment-incidences.
    We shall map each vertex $v \in V$ onto one of its incident cells $c = \phi(v)$ in such a way that no cell $c$ is used more than $\|c\|-5$ times.
    In particular, for each vertex $v$ the cell $\phi(v)$ must be of size at least~$6$.
    
    First, for every cell $c$ with at most $\|c\|-5$ vertex-incidences (equivalently, at least five edge-segments-incidences), we set $\phi(v) = c$ for each vertex $v \in \partial c$.
    If a vertex is incident to more than one such cell, we pick one arbitrarily.
    To map the remaining vertices, we now analyze the types of cells that can(not) occur in~$\Gamma$.

    Let the bipartition of $G$ be $V = W \dot\cup B$ with an independent set $W$ of \emph{white} vertices and an independent set $B$ of \emph{black} vertices.
    For each cell $c$ let us encode the occurrences of white vertices ($W$), black vertices ($B$), and edge-segments ($\calS$) around $\partial c$ as a cyclic sequence $\partial c = [x_1,\ldots,x_k]$ with $k = \|c\|$ and each $x_i \in \{W,B,\calS\}$.

    \begin{claim*}
        There is no cell $c$ in $\Gamma$ with $\sigma_c = [\calS,\calS,\calS]$ or $\sigma_c = [W,\calS,B,\calS,\calS,\calS]$.
    \end{claim*}
    \begin{claimproof}
        The sequence $\sigma_c = [\calS,\calS,\calS]$ would correspond to 
        a \C-cell, which is impossible in a simple fan-crossing drawing.

        For $\sigma_c = [W,\calS,B,\calS,\calS,\calS]$, let $w \in W$ and $b \in B$ be the vertices in $\partial c$ and let $e$ be the edge in $\partial c$ incident to neither $w$ nor $b$; see \cref{fig:bip-fan-crossing-impossible}(second).
        Then $e$ is crossed by an edge $f_1$ with endpoint $w$ and an edge $f_2$ with endpoint $b$.
        As $wb$ is already an edge in $\partial c$ that is distinct from $f_1$ and $f_2$, the common endpoint of $f_1,f_2$ must be a third vertex $v$, which makes $wbv$ a 3-cycle in $G$ -- a contradiction to the fact that $G$ is bipartite.
    \end{claimproof}

    \begin{figure}[htb]
        \centering
        \includegraphics{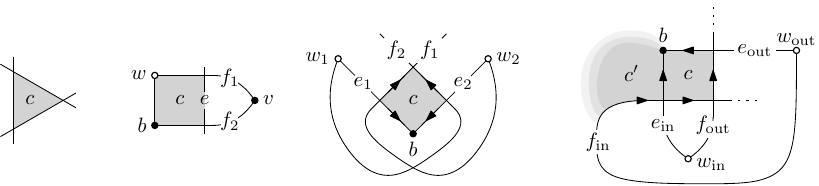}
        \caption{Illustrations of impossible types of cells $c$ in $\Gamma$ (the right-most one can occur under certain assumptions about $c'$; the others do not occur at all).}
        \label{fig:bip-fan-crossing-impossible}
    \end{figure}

    To exclude another type of cell, we now orient every edge-segment in $\Gamma$ towards the black endpoint of its edge.

    \begin{claim*}
        There is no cell $c$ in $\Gamma$ with $\sigma_c = [B,\calS,\calS,\calS,\calS]$ and with two edge-segments in $\partial c$ oriented towards a common crossings in $\partial c$.
    \end{claim*}
    \begin{claimproof}
        Assume for the sake of contradiction that $c$ is such a cell whose boundary consists (in that cyclic order) of $b \in B$, and edge-segments of edges $e_1 = bw_1$, $f_1$, $f_2$, and $e_2 = bw_2$; see \cref{fig:bip-fan-crossing-impossible}(third).
        If the common endpoint of $f_1$ and $e_2$ (they both cross $f_2$) is $b$, then $f_1$ and $e_1$ are adjacent crossing edges -- a contradiction to the simplicity of $\Gamma$.
        So the common endpoint of $f_1$ and $e_2$ is $w_2$ and, symmetrically, the common endpoint of $f_2$ and $e_1$ is $w_1$.
        But now, the assumed orientation of edge-segments forces $f_1$ and $f_2$ to cross a second time.
    \end{claimproof}

    \cref{fig:bip-fan-crossing-cells} shows all remaining possible cases of a cell $c$ with an incident black vertex $b \in B$ and at most four edge-segments-incidences.
    If $c$ is such a cell for which $\sigma_c = [B,\calS,\calS,\ldots]$, i.e., there is an edge-segment in $\partial c$ between $b$ and a crossing $x$, let $s$ be the other edge-segment at $x$ in $\partial c$ and call $x$ \emph{incoming at $c$} if $s$ is oriented outwards at $x$ and call $x$ \emph{outgoing at $c$} if $s$ is oriented inwards at $x$.
    Let us call a cell of size $5$ with one incoming and one outgoing crossing a \emph{bad cell} and a cell of size $6$ with two outgoing crossings a \emph{good cell}. 

    \begin{figure}[htb]
        \centering
        \includegraphics{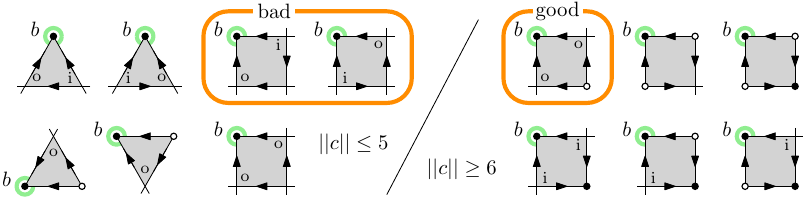}
        \caption{All possible situations of a cell with an incident black vertex $b$ and at most four edge-segments.
        The orientation of edge-segments towards black endpoints is indicated by arrows.
        Incoming crossings are labeled `i', outgoing crossings are labeled `o'.
        }
        \label{fig:bip-fan-crossing-cells}
    \end{figure}

    \begin{claim*}
        If a black vertex $b$ is incident to a bad cell $c$, then $b$ is also incident to a good cell or a cell with at least five edge-segment-incidences.
    \end{claim*}
    \begin{claimproof}
        Let $e_{\rm in},f_{\rm in}$ be the two edges crossing at the incoming crossing at $c$ with $e_{\rm in} = bw_{\rm in}$ being incident to $b$.
        Similarly, let $e_{\rm out},f_{\rm out}$ be the two edges crossing at the outgoing crossing at $c$ with $e_{\rm out} = bw_{\rm out}$ being incident to $b$; see \cref{fig:bip-fan-crossing-impossible}(fourth).
        If the common endpoint of $e_{\rm in}$ and $f_{\rm out}$ (they both cross $f_{\rm in}$) is $b$, then $f_{\rm out}$ and $e_{\rm out}$ are crossing adjacent edges -- a contradiction.
        Thus, $f_{\rm out}$ is incident to $w_{\rm in}$.
        Symmetrically, $f_{\rm in}$ is incident to $w_{\rm out}$.

        Now consider the cell $c'$ incident to $b$ sharing an edge-segment of $e_{\rm in}$ with $c$.
        If $c'$ has at most four edge-segments (otherwise we are done), then $c'$ appears in \cref{fig:bip-fan-crossing-cells}.
        We already know that $\sigma_{c'} = [B,\calS,\calS,\ldots]$ with both edge-segments oriented consistently.
        If $\sigma_{c'} = [B,\calS,\calS,W,\calS]$,
        then there would be an uncrossed edge between $b$ and $w_{\mathrm{out}}$, contradicting the fact that there also is a crossed version of this edge.
        If $\sigma_{c'} = [B,\calS,\calS,\calS]$, i.e., $f_{\rm in}$ is crossed by another edge $e$ incident to $b$, then $e$ must also be incident to $w_{\rm in}$ and, hence, be parallel to $e_{\rm in}$ -- a contradiction.
        If $\sigma_{c'} = [B,\calS,\calS,\calS,\calS]$, then $f_{\rm in}$ is crossed by another edge $e_1$, which is crossed by yet another edge $e_2$ that contains an outer edge-segment incident to $b$.
        Note that this outer edge-segment cannot be incident to $c$ (i.e., $e_2\neq e_{\rm out}$) since this would imply that $e_1,f_{\rm in},f_{\rm out}$ pairwise cross, which cannot happen in a simple fan-crossing drawing.
        If the common endpoint of $e_2$ and $f_{\rm in}$ (they both cross $e_1$) is $b$, then $f_{\rm in}$ and $e_{\rm in}$ are crossing adjacent edges -- a contradiction.
        Thus, $e_2$ is incident to $w_{\rm out}$ and, hence, parallel to $e_{\rm out}$ -- a contradiction.
        It follows that $\sigma_{c'} = [B,\calS,\calS,W,\calS,\calS]$, i.e., $c'$ is a good cell.
    \end{claimproof}

    \begin{claim*}
        Every black vertex $b$ is incident to a cell $c$ with at least five edge-segment-incidences, or to at least two cells $c_1,c_2$ of size at least $6$. 
    \end{claim*}
    \begin{claimproof}
        Assume that every cell incident to $b$ has at most four edge-segment-incidences, i.e., is depicted in \cref{fig:bip-fan-crossing-cells}.
        Observe that each $c$ with $\|c\| \leq 5$ has at least as many outgoing as incoming crossings.
        Moreover, the only cells $c$ with $\|c\| \geq 6$ with more incoming than outgoing crossings are the three in the bottom row.
        In total, there is the same number of incoming and outgoing crossings around $b$.
        Thus, if $b$ is incident to a good cell, then $b$ is also incident to a second cell of size at least~$6$, as desired.

        \begin{figure}[htb]
            \centering
            \includegraphics{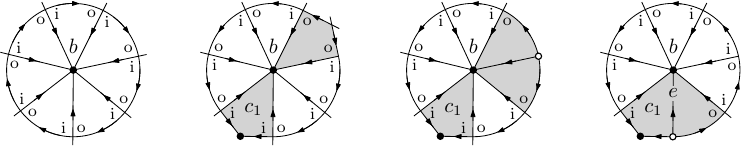}
            \caption{
                All combinations of non-good cells with at most four edge-segments around vertex $b$ with at most one such cell $c_1$ of size $\|c_1\| \geq 6$ give parallel edges, crossing adjacent edges or an edge without endpoints.
                Cells with a different number of incoming and outgoing crossings are highlighted.
            }
            \label{fig:bip-fan-crossing-neighborhood}
        \end{figure}

        So assume there is no good cell and, hence, by the previous Claim also no bad cell at $b$.
        Having only cells with exactly one incoming and one outgoing crossing (but no bad cell) would give an edge without endpoints as illustrated in \cref{fig:bip-fan-crossing-neighborhood}(first) -- a contradiction.
        It follows that there is at least one cell $c_1$ of size at least~$6$ at $b$ (and it is not good).
        Assume for the sake of contradiction that all cells at $b$ other than $c_1$ have size at most~$5$.
        First, say $c_1$ has two incoming crossings.
        If this is compensated by a cell with two outgoing crossings, we have crossing adjacent edges and if by two cells with no incoming crossing, we have two parallel edges, so we have a contradiction in both cases;
        see \cref{fig:bip-fan-crossing-neighborhood}(second, third).

        It remains to consider the case that $c_1$ has an uncrossed edge $e$ incident to $b$.
        Such $e$ must be shared by a cell with exactly one outgoing crossing, forcing $c_1$ to have an incoming crossing.
        But then we have two parallel edges; see \cref{fig:bip-fan-crossing-neighborhood}(fourth).
    \end{claimproof}

    Finally, the last Claim allows us to complete the desired mapping $\phi \colon V \to \calC$ with the property that each cell $c$ is used at most $\|c\|-5$ times.
    For this, it remains to set $\phi(v)$ for vertices only incident to cells with at most four edge-segment-incidences (displayed in right of \cref{fig:bip-fan-crossing-cells}).
    By the last Claim every black (and by symmetry also every white) such vertex is incident to at least two such cells with size at least $6$.
    
    As every such cell $c$ has $\|c\|-4$ vertex-incidences, we can set all $\phi(v)$ so that each cell $c$ is used at most $\|c\|-5$ times:
    This is done by going through the unassigned vertices and setting $\phi(v)$ for the current $v$ to one of its incident large cells.
    Whenever a large cell $c$ is used the maximum number of $\|c\|-5$ times, we continue with the only vertex $v$ incident to $c$ such that $\phi(v)\neq c$.
    If $\phi(v)$ is not set already, we set $\phi(v)$ to the other incident cell at $v$ of size at least $6$, and continue as before.
    Since at all times there is at most one cell $c$ that is used its maximum number of $\|c\|-5$ times while being incident to an unassigned vertex, setting $\phi(v)$ for the next vertex $v$ is always valid.
    Eventually, this gives the desired mapping.
\end{proof}

\begin{theorem}\label{thm:bip-fan-crossing}
    For every $n \geq 3$, every simple connected $n$-vertex bipartite fan-crossing graph $G$ has at most $4n-10$ edges.
\end{theorem}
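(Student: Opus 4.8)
The plan is to simply combine the two results that immediately precede the statement. Fix a simple connected fan-crossing drawing $\Gamma$ of $G = (V,E)$, which exists by assumption, and note $|V| = n \geq 3$. Applying \cref{thm:fan-crossing} gives
\[
    |E| \leq 5|V| - 10 - \sum_{c \in \calC_{\geq 5}}(\|c\|-5),
\]
and applying \cref{lem:bipartite-fan-crossing}, which is where bipartiteness is used, gives $\sum_{c \in \calC_{\geq 5}}(\|c\|-5) \geq |V|$. Substituting the second bound into the first yields $|E| \leq 5|V| - 10 - |V| = 4|V| - 10 = 4n-10$, as claimed.

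So at the level of the theorem statement there is essentially nothing left to prove: the entire content lives in \cref{lem:bipartite-fan-crossing}. Conceptually, the Density Formula with $t = 5$ already accounts for the ``slack'' $\sum_{c \in \calC_{\geq 5}}(\|c\|-5)$ against the budget $5n-10$; \cref{thm:fan-crossing} shows the remaining small-cell terms (coming from $\calC_3$, $\calC_4$, and $|\calX|$) do not spoil this in the fan-crossing setting; and bipartiteness buys exactly one extra unit of slack per vertex, which is precisely what improves the leading coefficient from $5$ to $4$.

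The genuine obstacle is therefore the one already handled inside \cref{lem:bipartite-fan-crossing}: constructing the charging map $\phi \colon V \to \calC$ that lands in cells of size at least $6$ and uses each such cell $c$ at most $\|c\|-5$ times. The delicate points there are (i) ruling out the configurations that would let a vertex be incident only to small cells — bipartiteness forbids triangles among a triple of mutually crossing edges, and orienting each edge-segment towards the black endpoint of its edge forbids the double-crossing configurations — and (ii) showing that a vertex incident only to cells of size $\le 5$ together with one or no ``large'' cell forces parallel edges, crossing adjacent edges, or an edge with no endpoints, so that in fact every such vertex is incident to at least two cells of size $\ge 6$ and the token count balances. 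Once that lemma is in hand, the theorem follows by the one-line computation above, with no further case analysis needed.
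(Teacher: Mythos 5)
Your proposal is correct and coincides with the paper's own proof: both simply combine \cref{thm:fan-crossing} with \cref{lem:bipartite-fan-crossing} to get $|E| \leq 5|V|-10-\sum_{c\in\calC_{\geq 5}}(\|c\|-5) \leq 4|V|-10$. Your accompanying commentary accurately locates the real work inside \cref{lem:bipartite-fan-crossing}, so nothing further is needed.
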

\begin{proof}
    Let $\Gamma$ be a simple fan-crossing drawing of a bipartite graph $G = (V,E)$.
    Then \cref{thm:fan-crossing} and \cref{lem:bipartite-fan-crossing} immediately give
    \[
        |E| \leq 5|V|-10 - \sum_{c \in \calC_{\geq 5}} (\|c\|-5) \leq 4|V|-10.\qedhere
    \]
\end{proof}

\subsection{Quasiplanar Graphs}
\label{sec:quasiplanar-appendix}

In this section, we reprove the upper bounds for connected simple and non-homotopic quasiplanar graphs by applications of the Density Formula.


As it will be needed, let us start with a general lemma that holds for every non-homotopic drawing (not necessarily quasiplanar).
Loosely speaking, for this lemma we want to remove a vertex $v$ from a drawing $\Gamma$, consider the ``cell it leaves behind'', and in particular compute the size of that cell.
Formally, the \emph{link of $v$} is the new cell resulting from subdividing all crossed edges incident to $v$ such that the part of the edge between the subdivision vertex and $v$ has exactly one crossing, then removing $v$ and all its incident edges from the obtained drawing.

\begin{lemma}\label{lem:remove-vertex}
    Let $\Gamma$ be any non-homotopic drawing of some connected graph $G$ on at least three vertices and $v$ be a vertex of $G$.
    Let $\calC(v) \subseteq \calC$ be the set of all cells incident to $v$, and $c_0$ be the link of $v$.
    Then $\|c_0\| = \sum_{c \in \calC(v)}(\|c\|-5) + |\calC(v)|$.
\end{lemma}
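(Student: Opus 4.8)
The plan is to compare the drawing $\Gamma$ with the drawing $\Gamma'$ obtained from $\Gamma$ by the operation described: first subdivide every crossed edge incident to $v$ so that the piece near $v$ carries exactly one crossing, then delete $v$ together with all its incident edges (and the leftover edge-segments incident to $v$). The link $c_0$ is a single new cell of $\Gamma'$, formed by merging all cells of $\calC(v)$ together with the (now-removed) edges and vertices separating them. The heart of the argument is a bookkeeping identity: I will track how the quantity ``sum of $(\|c\|-5)$ over the affected cells'' changes, and match it up with $\|c_0\|$. Concretely, I would walk along the boundary $\partial c_0$ and count its edge-segment-incidences and vertex-incidences, expressing each in terms of the boundary incidences of the cells in $\calC(v)$.

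The key steps, in order: (1) Set up the local picture around $v$. Going around $v$ cyclically, the edges incident to $v$ and the cells of $\calC(v)$ alternate; say $\deg(v) = d$, so there are $d$ cells in $\calC(v)$ counted with multiplicity — but I must be careful, since the same cell may be incident to $v$ several times, which is exactly why the statement is phrased with $|\calC(v)|$ rather than $d$. I would instead argue at the level of the $d$ ``corners'' at $v$ and reconcile multiplicities at the end, or (cleaner) first treat the non-degenerate case where all $d$ cells around $v$ are distinct and then observe the identity is local and additive so degeneracies cause no trouble. (2) Describe $\partial c_0$: it is obtained from the concatenation of the $d$ boundary-walks of the cells around $v$ by deleting, at each corner, the vertex-incidence of $v$ and the two outer edge-segment-incidences of the two edges of $v$ bounding that corner, then splicing in the single new subdivision vertex per crossed edge of $v$ (and nothing new for planar edges of $v$, whose removal just opens the boundary). (3) Count: each of the $d$ cells $c$ contributes $\|c\|$ incidences; summing over the $d$ corners gives $\sum_{c \in \calC(v)} \|c\|$ where the sum is now over corners (equivalently over $\calC(v)$ with multiplicity). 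From this I subtract the incidences that disappear and add those that appear, and the claim $\|c_0\| = \sum_{c\in\calC(v)}(\|c\|-5) + |\calC(v)|$ should fall out, i.e.\ each cell/corner effectively loses $4$ from its size (the $v$-incidence plus the two outer edge-segments, net with the one subdivision vertex added back, or similar — the exact constant $4 = 5 - 1$ is what makes the $-5$ bookkeeping of \cref{cor:density-formula-5} work).

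The main obstacle I expect is getting the local count of $\pm$ incidences exactly right, especially distinguishing (a) edges of $v$ that are crossed versus planar, since only crossed edges leave behind a subdivision vertex on $\partial c_0$, and (b) the various degeneracies: $v$ appearing multiple times on a single cell boundary, an edge-segment of $c_0$ appearing twice, two incident edges of $v$ both being planar so that consecutive corners merge without any intervening subdivision vertex, and the borderline case where $v$ has degree $1$ or $2$. The cleanest route is probably to verify the identity ``per corner'': show that replacing one corner at $v$ contributes exactly $\|c\| - 4$ edge-segment/vertex incidences to $\partial c_0$ (so that summing the $d$ corners and noting $\sum(\|c\|-4) = \sum(\|c\|-5) + |\calC(v)|$ with multiplicity gives the result), and then remark that the operation and the count are purely local around $v$, hence unaffected by whether distinct corners belong to the same or different cells. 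Non-homotopicity and connectedness of $G$ on $\ge 3$ vertices are used exactly to guarantee $c_0$ is a single well-defined cell whose boundary is a single closed walk (no edge of $v$ is an isolated bridge-to-nowhere, no lens collapses), so I would invoke those hypotheses at the point where I assert $\partial c_0$ is one cyclic sequence.
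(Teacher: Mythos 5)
Your overall strategy --- a purely local double count of boundary incidences around $v$, organized corner by corner, aiming at a loss of exactly $4$ per corner --- is the same as the paper's (which packages it as a multiset $\mathcal{I}_v$ of incidence--cell pairs, splits off the $3\deg(v)$ pairs involving $v$ or its outer edge-segments, and matches the rest against $\partial c_0$). However, the concrete bookkeeping in your step (2) is wrong in two ways and, as written, would not yield the formula. First, the subdivision vertices are \emph{not} spliced into $\partial c_0$: the subdivision point $p$ of a crossed edge $e$ at $v$ lies beyond the first crossing $\chi$ of $e$, hence on the far side of the edge $f$ crossing $e$ there; after deleting $v$ and its edges, $p$ and its stub lie in a \emph{different} new cell (the merger of the two corners of $\chi$ not incident to $v$), separated from $c_0$ by $f$. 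Second, and more importantly, ``delete $1+2$ per corner, add one subdivision vertex per crossed edge'' gives $\|c_0\|=\sum_{c}\|c\|-3\deg(v)+(\text{number of crossed edges at } v)$, whereas the correct identity is $\|c_0\|=\sum_{c}\|c\|-4\deg(v)$.

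The missing mechanism --- precisely the ``$b+1$ tuples correspond to one incidence of $c_0$'' step in the paper's proof --- is that each deleted outer edge-segment $s$ at $v$ also causes a \emph{merge at its far end}: the two incidences at which the two corners flanking $s$ meet the far endpoint of $s$ (the two appearances of the other endpoint $w$ when the edge is planar; the two segments of the crossing edge $f$, which fuse into a single segment of $f$ once the crossing $\chi$ disappears, when the edge is crossed) become a single incidence of $c_0$. This is an additional loss of one per edge at $v$, i.e.\ one more per corner, turning your $3$ into the required $4$. Until this merge is identified, the per-corner claim you intend to verify is not established. Your worry about multiplicities is legitimate but secondary: like the paper, you ultimately identify $\deg(v)$ with $|\calC(v)|$, i.e.\ treat the corners at $v$ as pairwise distinct cells, and the non-homotopic/connectedness hypotheses are indeed what make $c_0$ the union of exactly the cells in $\calC(v)$.
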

\begin{proof}
    Let $\mathcal{I}_v$ be the multiset that contains a tuple $(a,c)$ for each cell $c\in \mathcal C(v)$ and each incidence of a vertex or edge-segment $a$ with $c$. So, if $a$ appears multiple times on $\partial c$, then $\mathcal{I}_v$ contains the tuple $(a,c)$ with the according multiplicity.
    Let $A \subseteq \mathcal{I}_v$ be the tuples $(a,c)$ where $a$ corresponds to the vertex $v$ or an edge-segment incident to $v$.
    Then $|A| = 3 \deg(v)$.

    Now, since $\Gamma$ is non-homotopic and connected, the link $c_0$ of $v$ is the union of all cells in $\calC(v)$.
    Consider any vertex-incidence $i$ of a vertex $w$ with the cell $c_0$.
    In $\Gamma$, there is some number $b \geq 0$ of (uncrossed) edge-segments that are incident to $v$ and end at the incidence $i$.
    These correspond to exactly $b+1$ tuples $(w,c) \in \mathcal{I}_v$.
    Similarly, consider any edge-segment-incidence $i$ of an edge-segment and the cell $c_0$.
    In $\Gamma$, there is some number $b \geq 0$ of (uncrossed) edge-segments that are incident to $v$ and end at the incidence $i$.
    Then $i$ corresponds to exactly $b+1$ edge-segment-incidences in $\Gamma$ and, hence, there are exactly $b+1$ according tuples in $\mathcal{I}_v$.
    So, loosely speaking, for each of the $\deg(v)$ incident edge-segments at $v$, we have one extra tuple in $\mathcal{I}_v$ compared to the vertex-incidences and edge-segments-incidences of $c_0$.
    Thus, in total, the claim follows by:
    \[
        \sum_{c \in \calC(v)} \|c\| = |\mathcal{I}_v| = |A| + \|c_0\| + \deg(v) = \|c_0\| + 4\deg(v) = \|c_0\| + 4|\calC(v)|.\qedhere
    \]
\end{proof}

Turning back to quasiplanar drawings, we begin with an easy observation.

\begin{observation}\label{obs:quasiplanar-no-3-cells}
    A quasiplanar drawing has no \C-cells, and thus $|\calC_3| = 0$.\qed
\end{observation}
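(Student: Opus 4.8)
The plan is to rule out \C-cells directly; the equality $|\calC_3| = 0$ then follows, since by \cref{obs:types-of-cells} the cells of size $3$ in a non-homotopic connected drawing on at least three vertices are exactly the \C-cells.

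First I would read off the pictogram: the boundary $\partial c$ of a \C-cell $c$ is a triangle whose three corners are crossing-incidences and whose three sides are edge-segment-incidences, occurring in the cyclic order (edge-segment, crossing, edge-segment, crossing, edge-segment, crossing). Name the three sides as edge-segments $s_1, s_2, s_3$ of edges $e_1, e_2, e_3$ respectively, and let $x_{12}$, $x_{23}$, $x_{31}$ be the corners where $s_1$ meets $s_2$, where $s_2$ meets $s_3$, and where $s_3$ meets $s_1$.

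The key step is to observe that $e_1, e_2, e_3$ are pairwise distinct and pairwise crossing, which already contradicts quasiplanarity. For distinctness: at any crossing exactly two edges meet (edges are simple curves not passing through vertices, no edge crosses itself, and no three edges share a point), and the four edge-segments emanating from a crossing alternate between these two edges around it; hence the two sides of $c$ meeting at a given corner belong to two different edges. Applying this at $x_{12}$, $x_{23}$, $x_{31}$ yields $e_1 \neq e_2$, $e_2 \neq e_3$, $e_3 \neq e_1$ --- and incidentally shows that no single edge-segment can serve as two of the three sides. For pairwise crossing: by construction $e_1$ and $e_2$ properly cross at $x_{12}$, the edges $e_2$ and $e_3$ at $x_{23}$, and $e_3$ and $e_1$ at $x_{31}$, so $\{e_1, e_2, e_3\}$ is a triple of pairwise crossing edges --- which is forbidden in a quasiplanar drawing. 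Hence $\Gamma$ has no \C-cell, and with \cref{obs:types-of-cells} we get $|\calC_3| = 0$.

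I expect the only point that needs a moment's care --- and it is minor --- to be handling possible degeneracies of $\partial c$: that no edge-segment is repeated along the triangle (covered by the alternation argument above) and that no crossing plays the role of two of the three corners (impossible, since an edge meets a given crossing in a single point, so no side of the triangle can have the same crossing at both of its ends). Beyond this, the argument is purely local around the three corners of $c$ and requires no computation.
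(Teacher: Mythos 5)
Your argument is exactly the intended one: the three edge-segments bounding a \C-cell belong to three pairwise-crossing edges, which is precisely the configuration forbidden in a quasiplanar drawing; the paper treats this as immediate and gives no proof. Your careful handling of the degenerate possibilities and the appeal to \cref{obs:types-of-cells} for the conclusion $|\calC_3|=0$ match the paper's (implicit) reasoning.
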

%

A drawing $\Gamma$ is called \emph{filled}~\cite{CKPRU21} if, whenever two vertices $u,v$ are incident to the same cell $c$ in $\Gamma$, then $uv$ is an uncrossed edge in the boundary of $c$.

\begin{lemma}\label{lem:general-quasiplanar-filled}
    Every non-homotopic quasiplanar graph $G$ can be augmented by only adding edges to a graph that admits a filled non-homotopic quasiplanar drawing.
\end{lemma}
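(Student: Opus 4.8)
\textbf{Proof proposal for \cref{lem:general-quasiplanar-filled}.}

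The plan is to start with a non-homotopic quasiplanar drawing $\Gamma$ of $G$ and greedily add edges as long as it stays non-homotopic and quasiplanar, terminating in some edge-maximal drawing $\Gamma'$ of a supergraph $G'$ of $G$. I then want to argue that $\Gamma'$ must be filled. Suppose not: then there is a cell $c$ with two distinct vertices $u,v$ on $\partial c$ such that $uv$ is \emph{not} an uncrossed edge in $\partial c$. The goal is to route a new curve from $u$ to $v$ through the interior of $c$, staying close to $\partial c$, and show that the resulting drawing $\Gamma''$ is still non-homotopic and quasiplanar --- this contradicts the maximality of $\Gamma'$, unless that new curve is forbidden for some reason, in which case I need to show $uv$ was already present as an uncrossed boundary edge of $c$.

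The key steps, in order: (1) Formalize the greedy augmentation; since each edge addition strictly increases $|E|$ and $|E|$ is bounded (e.g.\ by the crude fact that adding an edge through a cell of size $\|c\|$ creates only boundedly many new crossings, or simply because quasiplanar graphs have $O(n)$ edges by known results, though to stay self-contained I would rather just note the process terminates because we only ever add edges and never delete any, so $|E|$ is monotone and at most $\binom{|V|}{2}$ times a bound on multiplicity --- actually parallel edges are allowed, so I should instead invoke that a non-homotopic quasiplanar drawing has at most $8|V|-20$ edges, or cap multiplicities, whichever is cleanest). (2) Take the hypothetical bad cell $c$ with $u,v \in \partial c$. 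Draw a new curve $\gamma$ from $u$ to $v$ inside $c$ hugging the boundary arc of $c$ between an incidence of $u$ and an incidence of $v$. Because $\gamma$ lies inside an open cell, it crosses no existing edge, so it introduces no new crossing at all; hence quasiplanarity is trivially preserved. (3) Check non-homotopy: the only new lenses created are those bounded by $\gamma$ together with (part of) an existing edge, or $\gamma$ together with another parallel copy of $uv$. Since $\gamma$ follows $\partial c$, any such lens either contains part of $\partial c$ in its interior --- hence a vertex or crossing --- or else $\gamma$ together with an existing $u$--$v$ edge bounds a lens with empty interior, which happens precisely when that existing edge is an uncrossed edge forming part of $\partial c$; but that is exactly the ``good'' case the definition of filled allows. (4) If $uv$ is not already an uncrossed edge on $\partial c$, conclude $\Gamma''$ is a valid non-homotopic quasiplanar drawing strictly larger than $\Gamma'$, contradiction. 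Therefore $\Gamma'$ is filled, and since $G \subseteq G'$ we are done.

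The main obstacle I anticipate is step (3): carefully ruling out that the newly drawn $\gamma$ creates a lens with empty interior that is \emph{not} of the allowed form. The subtlety is that $u$ (or $v$) may have multiple incidences on $\partial c$, and the boundary arc of $c$ between a chosen $u$-incidence and a chosen $v$-incidence might be ``short,'' so hugging it could produce a small lens against an adjacent edge-segment. I would handle this by being careful about \emph{which} pair of incidences to connect --- picking an arc of $\partial c$ that is not a single edge-segment between $u$ and $v$ (which exists precisely because $uv$ is not already an uncrossed boundary edge) --- and then arguing the region between $\gamma$ and that arc is a subset of $c$'s interior and contains all the vertices/crossings/edge-segments strictly interior to that boundary arc, hence is nonempty or its boundary is not a simple closed curve of the lens type. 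A second, more bookkeeping-ish obstacle is making the termination argument in step (1) clean without circularly invoking the density bound we are heading toward; capping edge multiplicities (which is harmless for an upper-bound-oriented lemma, or alternatively observing that non-homotopic drawings on a fixed vertex set have boundedly many edges) should resolve this.
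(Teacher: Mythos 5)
Your proposal is correct and follows essentially the same route as the paper's proof: repeatedly insert an uncrossed edge through a cell between two non-adjacent incident vertices, observe that quasiplanarity is untouched since no new crossings arise, and certify non-homotopy by noting that each boundary arc of the cell between the chosen $u$- and $v$-incidences contains a vertex or crossing precisely because $uv$ is not already an uncrossed edge on $\partial c$. Your extra care about termination (the paper only remarks that the edge count increases) and about which incidences to connect is reasonable but does not change the argument in substance.
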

\begin{proof}
    Let $\Gamma$ be a non-homotopic quasiplanar drawing of $G$.
    Assume that $\Gamma$ is not filled, and that $u,v$ are two vertices incident to the same cell $c$ in $\Gamma$, but not connected with an edge along $\partial c$.
    We draw an uncrossed edge from $u$ to $v$ through $c$.
    The new drawing is non-homotopic:
    this is clearly the case if there was not already an edge between $u$ and $v$ in $G$.
    Otherwise, the cyclic sequence $\partial c$ that contains $u$ and $v$ has a crossing or vertex in both open intervals delimited by $u$ and $v$.
    Since this procedure increases the number of edges, it can be repeated to end up with a filled drawing.
\end{proof}

\begin{observation}\label{obs:kMinusReal}
    In a filled drawing~$\Gamma$, no cell is incident to more than three pairwise distinct vertices.
    Further, no cell in~$\Gamma$ is incident to more than two pairwise distinct vertices if (and only if) there are no \T-cells in $\Gamma$.\qed
\end{observation}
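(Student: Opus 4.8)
The idea is to convert the filledness hypothesis into a statement about \emph{uncrossed} edges on a cell boundary and then invoke a small amount of plane topology. Suppose a cell $c$ is incident to pairwise distinct vertices $v_1,\dots,v_k$. By the definition of a filled drawing, for every pair $i\neq j$ there is an uncrossed edge between $v_i$ and $v_j$ on $\partial c$; fix one such edge $e_{ij}$ for each pair. Since every $e_{ij}$ lies in $E_p$, no $e_{ij}$ is crossed by anything, so the $\binom{k}{2}$ edges $e_{ij}$ together with $v_1,\dots,v_k$ form a crossing-free drawing $\Delta$ of $K_k$ inside $\Gamma$. As $c$ is a connected set disjoint from $\Delta$, it is contained in a single face $F$ of $\Delta$, and hence the only $e_{ij}$ that can appear on $\partial c$ are those on $\partial F$.

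For the first claim I would assume $c$ is incident to at least four distinct vertices and pick four of them, so $k=4$. A crossing-free drawing of $K_4$ on $\mathbb{S}^2$ has exactly four faces by Euler's Formula; since $K_4$ is $2$-connected every face boundary is a cycle, and because the four face sizes sum to $2|E(K_4)|=12$ while each is at least $3$, every face is a triangle bounded by exactly three edges. Thus at most three of the six edges $e_{ij}$ can lie on $\partial c$, whereas filledness forces all six onto $\partial c$ --- a contradiction. Hence every cell is incident to at most three pairwise distinct vertices.

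For the second claim, the first claim already identifies ``incident to more than two distinct vertices'' with ``incident to exactly three distinct vertices'', so it suffices to show that the cells incident to exactly three distinct vertices are precisely the \T-cells. If \T-cells are by definition exactly the cells incident to three distinct vertices, there is nothing left to prove; otherwise, with \T-cell meaning the cell type whose boundary reads vertex, edge-segment, vertex, edge-segment, vertex, edge-segment with the three vertices distinct, I would argue as follows. One inclusion is immediate. Conversely, let $c$ be incident to the three distinct vertices $v_1,v_2,v_3$; by filledness the uncrossed edges $e_{12},e_{23},e_{31}$ lie on $\partial c$, and since any two of these meet only at a shared vertex, the three of them form a simple closed curve $\gamma$ bounding two disks, exactly one of which, call it $D$, contains $c$; thus $\gamma=\partial D$ and $\partial c\subseteq\overline{D}$. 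It remains to see that the open disk $D$ is free of $\Gamma$: because $\gamma$ is made of uncrossed edges, no edge of $\Gamma$ crosses $\gamma$, so every vertex, crossing, or edge of $\Gamma$ meeting $D$ lies entirely in $\overline{D}$, and examining such a feature shows that it either puts a fourth distinct vertex on $\partial c$, contradicting the first claim, or separates $c$ from one of the three open arcs of $\gamma$, contradicting $\gamma\subseteq\partial c$. Hence $c=D$, so $\partial c=\gamma$ and $c$ is a \T-cell.

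I expect the routine parts to be the clique-and-face reduction of the first paragraph and the Euler count of the second. The one step requiring genuine care is the last topological argument of the third paragraph, where one must check, feature by feature (a spanning edge, a pendant edge, an interior vertex or crossing, a detached subdrawing), that nothing can sit inside the triangular disk $D$ without either violating the three-vertex bound or hiding part of $\gamma$ from $c$.
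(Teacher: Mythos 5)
The paper offers no proof at all here --- the observation is stated with an immediate \qed{} --- so there is nothing to compare against line by line; your argument is the natural one that the authors presumably had in mind, and your first paragraph-and-a-half is a complete and correct proof of the first claim: filledness places one uncrossed edge per pair on $\partial c$, these form a plane $K_4$, the cell lies in a single (triangular) face of it, and only the three edges of that face can appear on $\partial c$. Your reading of the \T-pictogram (a cell whose boundary reads vertex, uncrossed edge, vertex, uncrossed edge, vertex, uncrossed edge with three distinct vertices) is also consistent with how \T-cells are counted elsewhere in the paper (size $6$, three vertex-incidences).

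For the second claim, your strategy is right but the closing dichotomy is not quite airtight as stated. A piece of the drawing sitting inside the triangular disk $D$ could, in principle, present to $c$ only crossings and inner edge-segments: this would add edge-segment-incidences to $\partial c$ without putting a fourth \emph{vertex} on $\partial c$ and without separating $c$ from any arc of $\gamma$, and it is exactly the scenario in which $c$ would be incident to three distinct vertices yet fail to be a \T-cell. You need an argument that whatever meets $\partial c$ inside $D$ eventually exposes a vertex to $c$. The cleanest route is via the boundary walk of $c$ in the planarization: the occurrence of the uncrossed edge $e_{12}$ on $\partial c$ is flanked by occurrences of $v_1$ and $v_2$, and likewise for $e_{13}$ and $e_{23}$; if $\partial c$ contains anything besides these three edges and three vertices, then following the walk from, say, $v_1$ into $D$ one traverses edge-segments of crossed or pendant edges, and (using that every edge ends at a vertex and cannot cross $\gamma$) one either reaches a vertex not in $\{v_1,v_2,v_3\}$ on $\partial c$ --- contradicting the first claim --- or produces an edge between two of $v_1,v_2,v_3$ drawn inside $D$, which separates the arcs as you observe. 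In the paper's applications the drawing is additionally connected, which rules out components floating freely inside $D$ and makes this walk argument go through without further cases. So: correct approach, first half fully proved, second half needs the flagged case analysis actually carried out, with the ``no new vertex, no separation'' configuration explicitly excluded.
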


\begin{lemma}\label{lem:no-T-cells}
    Every non-homotopic connected quasiplanar graph $G$ on at least four vertices can be augmented by only adding edges to a graph that admits a filled non-homotopic quasiplanar drawing with no \T-cells.
\end{lemma}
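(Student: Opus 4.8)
The plan is an extremal argument built on top of \cref{lem:general-quasiplanar-filled}. Among all graphs $G^\star$ with $V(G^\star)=V(G)$ and $E(G^\star)\supseteq E(G)$ that admit a filled non-homotopic quasiplanar drawing, I fix one with the maximum number of edges --- this maximum exists since every such $G^\star$ is connected, non-homotopic, and quasiplanar, hence has at most a linear number of edges (e.g.\ $8|V|-20$ \cite{AckeTard07}) --- together with such a drawing $\Gamma^\star$. I claim $\Gamma^\star$ has no \T-cell; since $G\subseteq G^\star$, this proves the lemma. Assume for contradiction that $c$ is a \T-cell of $\Gamma^\star$. By \cref{obs:kMinusReal} together with $\Gamma^\star$ being filled, the boundary of $c$ is the cyclic sequence $u$, $uv$, $v$, $vw$, $w$, $wu$ for three pairwise distinct vertices $u,v,w$ where $uv$, $vw$, $wu$ are uncrossed edges, and the open interior of $c$ contains no vertex, crossing, or edge-segment. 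I will add one more edge to $G^\star$ while keeping a non-homotopic quasiplanar drawing; applying \cref{lem:general-quasiplanar-filled} to the result then produces an augmentation of $G$ with strictly more edges than $G^\star$, a contradiction.

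The first step is to locate a vertex $x\notin\{u,v,w\}$ that is adjacent in $G^\star$ to one of $u,v,w$. If no vertex outside $\{u,v,w\}$ were adjacent to $u$, then $u$ would have exactly the two neighbours $v$ and $w$, so the only two cells incident to $u$ would be $c$ and the cell on the other side of the uncrossed path $v$--$u$--$w$; since $\Gamma^\star$ is filled, that second cell would be the triangle $uvw$ as well, forcing $G^\star$ to be a single triangle and contradicting $|V|\ge 3+1$. This is the only place where the hypothesis $|V|\ge 4$ is used; indeed the statement fails for $|V|=3$, as the unique triangular face of a drawing of $K_3$ is a \T-cell.

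Now to the surgery. Assume without loss of generality that $x$ is adjacent to $u$, and add a \emph{parallel copy} $e^\star$ of the edge $xu$, routed so that it runs alongside the curve of $xu$ from $x$ almost up to $u$, then alongside the uncrossed triangle edge $uv$ (on the side not bounding $c$) from near $u$ almost up to $v$, then crosses the uncrossed triangle edge $vw$ once near $v$, thereby entering the empty cell $c$, and finally reaches $u$ inside $c$. The key point is that $e^\star$ crosses only edges already crossed by $xu$, plus the single edge $vw$. Quasiplanarity is preserved: a new forbidden triple would have to involve $e^\star$ together with either two edges each crossing $xu$ --- impossible, since these would already form a forbidden triple with $xu$ in $\Gamma^\star$ --- or an edge crossing both $e^\star$ and $vw$ --- impossible, since $vw$ is uncrossed in $\Gamma^\star$ and $e^\star$ is the only edge newly crossing it. Non-homotopicity is preserved as well: the two regions of the sphere bounded by the parallel curves $xu$ and $e^\star$ contain the vertices $v$ and $w$ respectively, so neither is an empty lens; and any lens formed along the thin corridor where $e^\star$ hugs $xu$ (or $uv$) is populated because $\Gamma^\star$ is non-homotopic and these uncrossed triangle edges carry no crossing of their own. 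Adding $e^\star$ and re-filling via \cref{lem:general-quasiplanar-filled} then contradicts the maximality of $G^\star$.

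The main obstacle I expect is making the routing of $e^\star$ precise at the two spots where it switches from hugging one curve to the next, namely near $v$ (and symmetrically the handling near $x$ when some edge incident to $x$ crosses $xu$). There $e^\star$ may be forced to cross a few further edges incident to $v$ --- those separating $uv$ from $vw$ in the rotation at $v$ --- and to keep quasiplanarity one must argue these can be taken pairwise non-crossing. This follows from $\Gamma^\star$ being quasiplanar once one picks, out of the two symmetric options (routing $e^\star$ toward $v$ or toward $w$), the side on which no two consecutive such edges cross, resolving any remaining degenerate case by a short recursive detour. Once this local bookkeeping is settled, the global counting above goes through unchanged.
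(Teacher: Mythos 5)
There is a genuine gap, and it sits exactly where you flag it: the routing of $e^\star$ near $v$. To get from the corridor along $uv$ (on the side away from $c$) to a crossing point on $vw$, your curve must traverse the angular sector at $v$ between $uv$ and $vw$, crossing every edge incident to $v$ in that sector. Your proposed repair does not close this gap. First, even if the fan edges at $v$ in that sector are pairwise non-crossing, $e^\star$ also crosses every edge that crosses $xu$; if some fan edge $g$ at $v$ crosses some edge $h$ that crosses $xu$, then $e^\star,g,h$ pairwise cross and quasiplanarity fails, and nothing you have assumed rules this out. Second, there is no reason why at least one of the two sectors (at $v$ or at $w$) should consist of pairwise non-crossing edges -- adjacent edges may cross in a non-homotopic drawing -- and the ``short recursive detour'' is not an argument. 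Third, each fan edge crossed by $e^\star$ acquires a new crossing with an edge ($e^\star$) that runs parallel to $xu$, so you would also need to re-verify the lens condition for all of these pairs, not only for the pair $\{xu,e^\star\}$.

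The paper's surgery is designed precisely to avoid this sector-crossing problem, and the difference is worth internalizing. Instead of duplicating $xu$ and sneaking into $c$ from outside, the paper draws a \emph{new} edge that \emph{starts at a corner of the \T-cell}: with corners $v_0,v_1,v_2$, it picks at some corner $v_0$ the edge $e_0=v_0u$ that immediately succeeds the cell $c$ in the rotation at $v_0$ (quasiplanarity guarantees some corner has such an edge leaving the triangle), then routes a new edge $e=v_1u$ through the empty cell $c$, out across the boundary edge $f=v_0v_2$ at a point right next to $v_0$, and from there hugs $e_0$ all the way to $u$. Because $e_0$ is the rotation-successor of $c$ at $v_0$, exiting $f$ next to $v_0$ lands directly in the sector between $f$ and $e_0$, so $e$ crosses only $f$ (uncrossed, hence harmless) and edges already crossing $e_0$ -- no fan of intermediate edges at any vertex is ever traversed. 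Your outer framing (extremal choice of $G^\star$, the use of $|V|\ge 4$ and connectivity to find a neighbour outside the triangle, and the lens analysis for the two regions containing $v$ and $w$) is sound and parallels the paper, but the surgery itself needs to be replaced by one of this kind; as written, the quasiplanarity of the augmented drawing is not established.
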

\begin{proof}
    By \cref{lem:general-quasiplanar-filled}, we may assume that $G$ has a filled non-homotopic quasiplanar drawing $\Gamma$.
    Assume that $c$ is a \T-cell in $\Gamma$ and in clockwise order $v_0,v_1,v_2$ are the three vertices around $\partial c$.

    \begin{figure}[htb]
        \centering
        \includegraphics{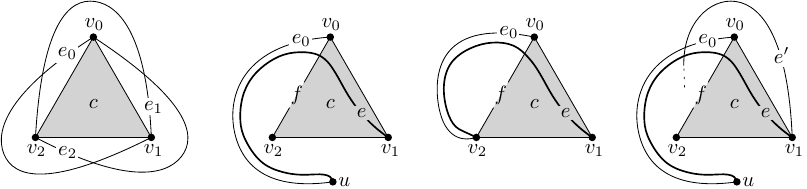}
        \caption{Illustrations of the proof of \cref{lem:no-T-cells}.}
        \label{fig:no-T-cell}
    \end{figure}

    For $i\in\{0,1,2\}$ (all indices modulo~$3$), if $\deg(v_i) > 2$, then let $e_i$ be the edge incident to $v_i$ that clockwise succeeds $c$ in the cyclic order of edges incident to~$v_i$.
    See \cref{fig:no-T-cell}.
    As $G$ is connected and has at least four vertices, at least one such $e_i$ exists. Note that if $e_i$ ends at $v_{i+1}$, then $\deg(v_{i+1}) > 2$ and 
    $e_{i+1}$ exists.
    As $\Gamma$ is quasiplanar, at least one of the $e_i$ ends in some vertex~$u$ other than $v_i$ and $v_{i+1}$, say $e_0 = v_0u$ with $u\neq v_0,v_1$, see \cref{fig:no-T-cell}(first).
    
    Now we draw a new edge $e$ starting at $v_1$, going through $c$, crossing $\partial c$ at the edge $f = v_0v_2$, and following close to $e_0$ to vertex $u$, see \cref{fig:no-T-cell}(second, third).
    Note that $e$ crosses only $f$ and edges that also cross $e_0$ and, hence, the obtained drawing $\Gamma'$ is again quasiplanar. 
   
    To argue that $\Gamma'$ is non-homotopic, let us first consider the case that there is an edge $e' = v_1u$ in $\Gamma'$ parallel to $e$ but not crossing $e$ and consider the two lenses between $e$ and $e'$. 
    If $u \neq v_2$, since $f$ is not crossed by $e'$, one lens contains $v_0$ and the other $v_2$. 
    If $u=v_2$, then $e_0$ and $f$ are parallel edges and, thus, some crossings or vertices are inside both their lenses, see \cref{fig:no-T-cell}(third). 
    By construction, one of the sides of the lens of $e$ and $e'$ contains all the crossings and vertices in one of the two lenses of $e_0$ and $f$, while the other contains $v_0$.

    It remains to consider the case that there is some edge $e'$ that crosses $e$ such that it forms a lens $\ell$ with $e$.
    First, assume that~$\partial \ell$ is incident to the endpoint~$v_1$ of~$e$ (and exactly one crossing of $e'$ with $e$).
    In this case, the lens~$\ell$ contains either~$v_0$ or~$v_2$ since~$f$ is planar in $\Gamma$, see \cref{fig:no-T-cell}(fourth).
    Second, assume that~$\partial \ell$ is either incident to $u$ (and exactly one crossing of $e'$ with $e$) or incident to exactly two crossings of $e'$ with $e$.
    Either way, lens~$\ell$ is essentially a duplicate of a lens between~$e_0$ and~$e'$ and, hence, it contains a crossing or vertex.
    Altogether, this shows that~$\Gamma'$ is indeed non-homotopic.
    
    Finally observe that $\Gamma'$ is again filled.
    By repeating this procedure, we eventually eliminate all \T-cells.
\end{proof}


\begin{lemma}\label{lem:non-homotopic-quasiplanar}
    Let $\Gamma$ be a non-homotopic filled drawing of a connected graph $G = (V,E)$ with $|V|\ge 3$ and where no cell in~$\Gamma$ is incident to more than $2$ pairwise distinct vertices.    
    Then
    \[
        \#\text{\A-cells} \leq 2|E_x|  - 2|V| + 2 \cdot \smashoperator{\sum_{c \in \calC_{\geq 5}}}(\|c\|-5).
    \]
\end{lemma}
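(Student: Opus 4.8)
The plan is to charge each \A-cell to structural features of the drawing, namely to excess edge-segment-incidences of large cells and to crossed edges, while paying for $|V|$ via a clever accounting. Since $\Gamma$ is non-homotopic and filled with no cell incident to more than two distinct vertices, \cref{obs:types-of-cells} tells us precisely what the small cells look like, and \cref{lem:remove-vertex} will be the engine: for a vertex $v$ with incident cells $\calC(v)$ and link $c_0$, we have $\|c_0\| = \sum_{c\in\calC(v)}(\|c\|-5) + |\calC(v)|$. I would start by summing this over all $v\in V$; the left side is $\sum_v \|c_0(v)\|$, and on the right each cell $c$ is counted once for each distinct vertex incident to it, so $\sum_v |\calC(v)|$ equals the number of vertex-incidences (counting a cell once per distinct vertex, which since the drawing is filled and has no $3$-vertex cell means at most twice per cell). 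The key is then to bound each link size $\|c_0(v)\|$ from below — since the link is itself a cell-like region, $\|c_0(v)\| \ge $ (number of edge-segments of $\Gamma$ incident to $v$ that get "inherited" by the link) and more usefully, since $v$ has degree $\deg(v)\ge 1$, the link has size at least... here I expect to need that the link is non-degenerate enough, or to handle degenerate cases, so that $\|c_0(v)\|\ge \deg(v) \ge$ something. In fact the cleanest route: $\|c_0(v)\| \ge 0$ always, but we want a bound forcing the right-hand side.

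Concretely, I would instead count \A-cells directly. Each \A-cell $c$ has boundary of type $[\,v, s_1, x_1, s_2, x_2, s_3\,]$: one vertex-incidence, two crossings $x_1,x_2$, one inner edge-segment $s_2$ (between the two crossings), and two outer edge-segments $s_1,s_3$ incident to $v$. The inner edge-segment $s_2$ lies on a crossed edge; by non-homotopy, $s_2$ is incident to at most one \A-cell (this is exactly \cref{lem:A-cells-vs-X}'s mechanism). So $\#\text{\A-cells} \le \#\{\text{inner edge-segments}\} = |\calS_{\rm in}| = 2|\calX| - |E_x|$ by \cref{obs:inner-segments} — but this is too weak by itself. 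To get the $2|E_x| - 2|V|$ bound I would refine: I claim each crossed edge $e$ contributes at most $2$ inner edge-segments that can be "\A-supporting," and then subtract a full $2|V|$ by exhibiting, for each vertex $v$, that $v$ "kills" at least two would-be \A-cells or forces two units of surplus $\sum_{\calC_{\ge5}}(\|c\|-5)$. The mechanism for the $-2|V|$ term is where \cref{lem:remove-vertex} re-enters: if $v$ is incident to many \A-cells, the link $c_0(v)$ is large, contributing to $\sum_{c\in\calC_{\geq 5}}(\|c\|-5)$; the formula $\|c_0\| = \sum_{c\in\calC(v)}(\|c\|-5)+|\calC(v)|$ quantitatively trades \A-cells at $v$ (each of size $4$, contributing $-1$) against the positive quantity $\|c_0\|$, and this is precisely the $2$-for-$1$ exchange rate appearing in the statement.

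So the actual plan is: \textbf{(1)} For each $v\in V$ apply \cref{lem:remove-vertex} to get $\|c_0(v)\| + 5|\calC(v)| - \sum_{c\in\calC(v)}\|c\| = 0$, i.e. $\sum_{c\in\calC(v)}(5-\|c\|) = \|c_0(v)\| + 4|\calC(v)| - \sum_{c\in\calC(v)}\|c\| \cdot 0$... let me instead just write $\sum_{c\in\calC(v)}(\|c\|-5) = \|c_0(v)\| - |\calC(v)|$. \textbf{(2)} Sum over $v$: each cell $c$ appears in $\calC(v)$ for exactly $d(c)$ vertices $v$, where $d(c)\in\{0,1,2\}$ is the number of distinct vertices on $\partial c$ (filled, $\le 3$-vertex-free). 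Thus $\sum_v \sum_{c\in\calC(v)}(\|c\|-5) = \sum_c d(c)(\|c\|-5)$ and $\sum_v|\calC(v)| = \sum_c d(c)$, so $\sum_v \|c_0(v)\| = \sum_c d(c)(\|c\|-4)$. \textbf{(3)} Lower-bound the left side: a link $c_0(v)$ is a cell in the drawing obtained after the subdivision-then-deletion, and its size is at least the number of distinct neighbors-worth of structure it retains; the crucial inequality will be $\|c_0(v)\| \ge 2\cdot(\text{number of }\A\text{-cells at }v) + (\text{number of other size-}4\text{ cells at }v) + \ldots$, essentially because in the link each \A-cell at $v$ "opens up" into a path of two edge-segment-incidences while \B-cells merge. \textbf{(4)} Separately, split the cell-sum $\sum_c d(c)(\|c\|-4)$ into the $\|c\|=4$ part (which is $d(c)\cdot 0 = 0$ for \A-cells but $d(c)\cdot 0=0$ also for \B-cells — hmm, so size-$4$ cells contribute $0$; size-$3$ don't exist by no-\C\ hypothesis... wait \C-cells aren't excluded here), handle small cells via \cref{lem:B-general} and \cref{lem:A-cells-vs-X}, and collect $\sum_{c\in\calC_{\ge5}}(\|c\|-5)$-type terms. \textbf{(5)} Combine: the $2|V|$ arises because $\sum_c d(c) \le 2\sum_{c\in\calC(v)\text{-relevant}}$... more precisely, I'd use that the number of (distinct-vertex)-incidences summed is $\le 2|\calC| $ but also related to $|V|$ by a handshake-type identity — actually each vertex $v$ is incident to $\ge 2$ cells when $|V|\ge 3$ and $\Gamma$ connected, giving $\sum_v |\calC(v)| \ge 2|V|$, and this $2|V|$ is exactly the term subtracted.

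\textbf{The main obstacle} I anticipate is step (3): getting a clean lower bound on the link size $\|c_0(v)\|$ that correctly weights \A-cells at $v$ with the coefficient $2$ (so that when moved to the other side it becomes $-2|V|$ after using $\sum_v |\calC(v)| \ge 2|V|$), while handling degenerate cells (degenerate \B-, \E-, \F-cells per \cref{fig:example-cells}) and the possibility that $v$ has uncrossed incident edges correctly. The bookkeeping in \cref{lem:remove-vertex} already accounts for multiplicities on $\partial c_0$, so the danger is double-counting when an edge-segment or crossing appears twice on $\partial c_0$; I would guard against this by working with the incidence multiset $\mathcal{I}_v$ exactly as in that lemma's proof rather than with the cells themselves, and by treating the \C-cells (not excluded by hypothesis, unlike in the quasiplanar application) as a small additive correction that the inequality absorbs because $\|c\|-5 = -2 < 0$ for \C-cells contributes favorably to the direction we need. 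Finally I would double check the edge case $|V|=3$ and small-drawing degeneracies separately, since \cref{obs:types-of-cells} and the filled/no-$3$-vertex structure are only guaranteed for $|V|\ge 3$.
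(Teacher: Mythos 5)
You correctly identify the paper's skeleton: apply \cref{lem:remove-vertex} to every vertex, sum over $V$, and use filledness plus the at-most-two-distinct-vertices hypothesis to control how often each cell is counted (indeed \C-cells and \B-cells never appear in any $\calC(v)$, each \A-cell appears exactly once contributing $-1$, and each cell of size at least $5$ appears at most twice). But the proposal does not close, because the two quantitative inputs that actually produce the terms $2|E_x|$ and $-2|V|$ are missing. The paper's proof rests on the lower bound $\|c_0(v)\| \geq 2 + \deg_{G_p}(v)$, where $G_p=(V,E_p)$ is the planar subgraph: the link has at least two edge-segment-incidences, and (since every planar edge $vw$ leaves $w$ on $\partial c_0$) at least $\deg_{G_p}(v)$ vertex-incidences. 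Summing this gives $2|V| + 2|E_p|$ on one side; combining with $\sum_{v}|\calC(v)| \leq \sum_v \deg(v) = 2|E| = 2|E_p|+2|E_x|$ on the other side, the $2|E_p|$ terms cancel and exactly the claimed inequality falls out. Your step (3) never arrives at this bound — you explicitly flag it as the main obstacle and instead float a bound of the form $\|c_0(v)\| \geq 2\cdot\#\{\text{\A-cells at } v\} + \ldots$, which is not needed and not obviously true (each \A-cell at $v$ contributes exactly $0$ to $\|c_0(v)\|$ via \cref{lem:remove-vertex}, since $(\|c\|-5)+1=0$).

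Moreover, your proposed mechanism for the $-2|V|$ term in step (5), namely $\sum_v |\calC(v)| \geq 2|V|$, points in the wrong direction: in the combined inequality $\sum_v|\calC(v)|$ sits on the side where an \emph{upper} bound is required (it is bounded above by $2|E|$, which is where $|E_x|$ enters), whereas the $-2|V|$ comes from the constant $2$ in the link-size lower bound. The side remarks about charging \A-cells to inner edge-segments and about \C-cells needing special treatment are red herrings (the former is too weak, as you note; the latter is moot since \C-cells have no vertex-incidences). So the approach is the right one, but the proof as proposed has a genuine gap at precisely the step that makes the lemma work.
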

\begin{proof}
    For any fixed vertex $v$, let $\calC(v)$ be the set of all cells in $\Gamma$ incident to $v$, and $c_0$ be the link of $v$.
    As $\Gamma$ is filled, each vertex in $\partial c_0$ is a neighbor of $v$ in the planar subgraph $G_p = (V,E_p)$ of $G$.
    Clearly, $\partial c_0$ contains at least two edge-segment-incidences and, hence, $\|c_0\| \geq 2 + \deg_{G_p}(v)$. 
    Thus, by \cref{lem:remove-vertex}
    \begin{equation}
        |\calC(v)| + \sum_{c \in \calC(v)}(\|c\|-5) = \|c_0\| \geq 2 + \deg_{G_p}(v).\label{eq:c_0-at-least-2}
    \end{equation}
    Using that every cell $c \in \calC$ is incident to at most two distinct vertices and summing~\eqref{eq:c_0-at-least-2} over all vertices, we obtain
    \begin{align*}
        \sum_{v \in V}|\calC(v)| - \#\text{\A-cells} + 2\cdot \smashoperator{\sum_{c \in \calC_{\geq 5}}}(\|c\|-5) &\geq 2|V| + 2|E_p|.
    \end{align*}
    Together with $\sum_{v \in V}|\calC(v)| \le \sum_{v\in V}\deg(v) = 2|E| = 2|E_p| + 2|E_x|$ this gives the desired:
    \[
        \#\text{\A-cells} \leq 2|E_x| + 2\cdot \smashoperator{\sum_{c \in \calC_{\geq 5}}}(\|c\|-5) - 2|V|.\qedhere
    \]    
\end{proof}

\begin{theorem}\label{thm:general-quasiplanar}
    For every $n \geq 3$, every connected non-homotopic $n$-vertex quasiplanar graph $G$ has at most $8n-20$ edges.
\end{theorem}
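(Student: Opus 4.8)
The plan is to let the reduction lemmas do the structural work and then let the Density Formula with $t=5$ do the arithmetic. Since we only ever want an \emph{upper} bound on $|E|$ and \cref{lem:no-T-cells} augments $G$ by \emph{adding} edges, it suffices to prove the bound for a connected graph $G=(V,E)$ that admits a filled non-homotopic quasiplanar drawing $\Gamma$ with no \T-cells; for $n\ge 4$ this is exactly what \cref{lem:no-T-cells} provides (applied to the original graph, whose connectivity is inherited), and the tiny case $n=3$, where the claimed bound is merely $4$, is dispatched by a direct inspection. By \cref{obs:kMinusReal}, the absence of \T-cells means that no cell of $\Gamma$ is incident to more than two pairwise distinct vertices, so \cref{lem:non-homotopic-quasiplanar} applies to $\Gamma$. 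Moreover, by \cref{obs:quasiplanar-no-3-cells} there are no \C-cells, hence $|\calC_3|=0$, and by \cref{obs:types-of-cells} we have $|\calC_4| = \#\text{\A-cells} + \#\text{\B-cells}$.

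Next I would invoke \cref{cor:density-formula-5} (the Density Formula with $t=5$), which, using $|\calC_3|=0$, reads $|E| = 5|V|-10 + |\calC_4| - |\calX| - \sum_{c\in\calC_{\geq 5}}(\|c\|-5)$. The whole game is then to bound $|\calC_4|-|\calX|$ from above. For the \B-cells I would combine the first inequality of \cref{lem:B-general} (with no \C-cells, giving $|\calS_{\rm in}| \ge \#\text{\A-cells} + 2\cdot\#\text{\B-cells}$) with $|\calS_{\rm in}| = 2|\calX| - |E_x|$ from \cref{obs:inner-segments}, obtaining $2\cdot\#\text{\B-cells} \le 2|\calX| - |E_x| - \#\text{\A-cells}$. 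Adding $2\cdot\#\text{\A-cells}$ to both sides and halving yields $|\calC_4| - |\calX| \le \frac12\#\text{\A-cells} - \frac12 |E_x|$. Into this I would plug the \A-cell bound from \cref{lem:non-homotopic-quasiplanar}, namely $\#\text{\A-cells} \le 2|E_x| - 2|V| + 2\sum_{c\in\calC_{\geq 5}}(\|c\|-5)$, which gives $|\calC_4| - |\calX| \le \frac12|E_x| - |V| + \sum_{c\in\calC_{\geq 5}}(\|c\|-5)$.

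Substituting this back into the Density Formula, the two occurrences of $\sum_{c\in\calC_{\geq 5}}(\|c\|-5)$ cancel exactly, leaving $|E| \le 4|V| - 10 + \frac12|E_x|$. Since $|E_x|\le |E|$, rearranging gives $\frac12|E| \le 4|V|-10$, i.e.\ $|E| \le 8|V|-20$, which is the desired bound for $G$ (and hence for the original, possibly smaller, graph).

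As for the difficulty: the genuinely hard work is already packaged into \cref{lem:no-T-cells} and \cref{lem:non-homotopic-quasiplanar}, so what remains is essentially bookkeeping. The points that need care are (i) confirming that the reduction is legitimate — that augmenting $G$ can only increase $|E|$ and that the resulting drawing meets \emph{every} hypothesis of \cref{lem:non-homotopic-quasiplanar} (connected, filled, non-homotopic, $|V|\ge 3$, and every cell incident to at most two distinct vertices), and (ii) ensuring that the two $\sum_{c\in\calC_{\geq 5}}(\|c\|-5)$ terms genuinely cancel rather than accumulate, since it is precisely this cancellation that makes the coefficient of $|V|$ come out to $4$ before the final doubling via $|E_x|\le|E|$.
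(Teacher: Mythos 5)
Your proposal is correct and follows essentially the same route as the paper: reduce via \cref{lem:general-quasiplanar-filled,lem:no-T-cells,obs:kMinusReal} to a filled drawing with every cell incident to at most two distinct vertices, then combine \cref{cor:density-formula-5} with \cref{lem:B-general}, \cref{obs:inner-segments}, and \cref{lem:non-homotopic-quasiplanar}, cancel the $\sum_{c\in\calC_{\geq 5}}(\|c\|-5)$ terms, and finish with $|E_x|\leq|E|$. The only (minor) difference is that you explicitly flag the $n=3$ case, which \cref{lem:no-T-cells} does not cover; that is a reasonable extra care, not a change of method.
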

\begin{proof}
    Let $\Gamma$ be a non-homotopic quasiplanar drawing of $G = (V,E)$.
    As $G$ is connected, so is $\Gamma$.
    By \cref{obs:kMinusReal,lem:general-quasiplanar-filled,lem:no-T-cells} we may assume that $\Gamma$ is filled and no cell in~$\Gamma$ is incident to more than two pairwise distinct vertices and, hence, \cref{lem:non-homotopic-quasiplanar} applies.
    Recall from \cref{obs:quasiplanar-no-3-cells} that $|\calC_3|=0$.      
    Now the Density Formula with $t=5$ (\cref{cor:density-formula-5}) gives
    \begin{align*}
        |E|&=5|V|-10 + |\calC_4| - |\calX| -\sum_{c \in \calC_{\geq 5}} (\|c\|-5)\\
        &\leq 5|V|-10+\frac12\left(|\calS_{\rm in}|+\#\text{\A-cells}\right)-|\calX|- \smashoperator{\sum_{c \in \calC_{\geq 5}}} (\|c\|-5) \hspace{4.7em} \text{(by \cref{lem:B-general})}\\
        &\leq 5|V|-10+\frac12\left(\#\text{\A-cells}-|E_x|\right) - \smashoperator{\sum_{c \in \calC_{\geq 5}}} (\|c\|-5) \hspace{5.4em} \text{(by \cref{obs:inner-segments})}\\
        &\leq 5|V|-10+\frac12\left(|E|-2|V| +2\cdot \smashoperator{\sum_{c \in \calC_{\geq 5}}} \|c\|-5\right) - \smashoperator{\sum_{c \in \calC_{\geq 5}}} (\|c\|-5)\hspace{2.2em} \text{(by \cref{lem:non-homotopic-quasiplanar})}\\
        &= 0.5|E| + 4|V| - 10,
    \end{align*}
    which implies the desired $|E| \leq 8|V|-20$.
\end{proof}

\subsection{Simple Quasiplanar Graphs}
\label{sec:simple-quasiplanar-appendix}

The case of simple quasiplanar graphs mostly follows the procedure for non-homotopic ones.
Among the few (necessary) differences, is the $3$-connectivity requirement in the following.

\begin{lemma}\label{lem:simple-quasiplanar-filled}
    Every $3$-connected simple quasiplanar graph $G$ can be augmented by only adding edges to a graph that admits a filled simple quasiplanar drawing $\Gamma$.
\end{lemma}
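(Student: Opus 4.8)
The plan is to follow the proof of \cref{lem:general-quasiplanar-filled}, augmenting $\Gamma$ one bad cell at a time, but with extra care never to create parallel edges --- this is precisely where $3$-connectivity is needed. So suppose $\Gamma$ is a simple quasiplanar drawing of a graph $G'$ obtained from $G$ by only adding edges (hence $G'$ is again $3$-connected), starting with $G'=G$. If $\Gamma$ is not filled, fix a cell $c$ with two distinct incident vertices $u,v$ such that $uv$ is not an uncrossed edge on $\partial c$. There are three situations. If $uv\notin E(G')$, we draw an uncrossed edge $uv$ through $c$; the result is again simple and quasiplanar and has one more edge. If $uv$ is a \emph{crossed} edge of $G'$, we reroute it: delete $uv$ and redraw it as an uncrossed edge through (the cell containing) $c$; this keeps the graph unchanged and the drawing simple and quasiplanar, but strictly decreases $|\calX|$. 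Iterating, the pair $(|E(G')|,-|\calX|)$ strictly increases lexicographically and is bounded, so the process terminates with a filled drawing, which proves the lemma --- provided the third situation cannot occur.

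The heart of the argument is to rule out that $uv$ is an \emph{uncrossed} edge of $G'$ not appearing on $\partial c$. First, since $G'$ is $3$-connected (hence $2$-connected and $3$-edge-connected) and $\Gamma$ is connected, its planarization has no cut vertex, so every cell of $\Gamma$ is bounded by a cycle; in particular $u$ and $v$ each appear exactly once on $\partial c$, splitting it into two arcs $Q_1,Q_2$. No $Q_i$ is a single edge-segment (such a segment would join the two \emph{original} vertices $u,v$ and hence be the uncrossed edge $uv$, contrary to assumption), so each $Q_i$ carries an internal vertex or crossing. As $\Gamma$ is connected, $c$ is an open disk, so $\mathbb{S}^2\setminus c$ is a closed disk $D^{*}$ that contains all of the drawing and is bounded by the cycle $Q_1\cup Q_2$. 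The curve of $uv$ avoids $c$ and, being crossing-free, meets $\partial c$ only in $u,v$, so it is a chord of $D^{*}$ splitting it into closed sub-disks $D_1^{*}\supseteq Q_1$ and $D_2^{*}\supseteq Q_2$. No edge of $G'$ can pass from $\mathrm{int}(D_1^{*})$ to $\mathrm{int}(D_2^{*})$: it cannot cross the uncrossed edge $uv$, and it cannot cross an edge-segment of a $Q_i$ because the other side of that segment is the empty cell $c$. Hence the original vertices of $D_i^{*}\setminus\{u,v\}$ form a set $T_i$ with $N_{G'}(T_i)\subseteq T_i\cup\{u,v\}$, and $T_1\sqcup T_2=V(G')\setminus\{u,v\}$. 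Both $T_i$ are non-empty: if $Q_i$ carries an internal original vertex, it lies in $T_i$; if $Q_i$ carries a crossing $x$, then the edge through $x$ is distinct from the uncrossed edge $uv$, is confined to $D_i^{*}$, and so has an endpoint in $T_i$ (it cannot have both endpoints in $\{u,v\}$, since $G'$ is loopless with a unique $uv$-edge). Therefore $\{u,v\}$ separates $T_1$ from $T_2$ in $G'$, contradicting $3$-connectivity.

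I expect the main obstacle to be exactly this last step: turning the purely local configuration ``$uv$ does not appear on $\partial c$'' into a global $2$-separator of $G'$. Making it precise requires the auxiliary observation that, under $3$-connectivity, all cells of $\Gamma$ are non-degenerate (bounded by cycles) --- which itself needs the $3$-edge-connectivity consequence of $3$-connectivity to handle crossing-vertices of the planarization --- together with the careful topological bookkeeping of which side of the chord $uv$ inside $D^{*}$ each edge, vertex and crossing lies on. The remaining pieces (preserving simplicity and quasiplanarity when adding or rerouting a crossing-free edge through an empty cell, and the termination bound) are routine.
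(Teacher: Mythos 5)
Your proposal is correct and follows essentially the same route as the paper: add an uncrossed edge $uv$ through the offending cell, delete any pre-existing $uv$-edge, and use $3$-connectivity to argue that a deleted edge must have been crossed, so that the process terminates because either the edge count rises or the crossing count drops. The only difference is one of detail: the paper merely asserts that an uncrossed $uv$-edge off $\partial c$ would make $\{u,v\}$ a separator, whereas you carry out the topological argument (cells bounded by cycles via $2$-connectedness of the planarization, the chord splitting $\mathbb{S}^2\setminus c$, and the non-emptiness of both sides) in full, which is a welcome elaboration rather than a deviation.
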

\begin{proof}
    As in the proof of \cref{lem:general-quasiplanar-filled}, whenever two vertices $u,v$ are incident to $c$ but not connected by an edge along $\partial c$, we draw an uncrossed edge $uv$ through $c$.
    If there already was an edge $e$ between $u$ and $v$, we remove $e$.
    Note that in this case~$e$ has to be crossed, as otherwise $G-\{u,v\}$ would be disconnected, contradicting the $3$-connectivity.
    Hence, this procedure decreases the number of crossings and it can be repeated to end up with the desired drawing.
\end{proof}

The analogous statement to \cref{lem:non-homotopic-quasiplanar} is weaker and requires minimum degree at least~$4$.

\begin{lemma}\label{lem:simple-quasiplanar}
    Let $\Gamma$ be a filled simple quasiplanar drawing of a connected graph $G = (V,E)$ of minimum degree at least~$4$.
    Then
    \[
        \#\text{\A-cells} - |E_x| \leq |E| + 2 \cdot \smashoperator{\sum_{c \in \calC_{\geq 5}}}(\|c\|-5) - 3.5|V|.
    \]
\end{lemma}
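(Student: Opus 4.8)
The plan is to mimic the argument for \cref{lem:non-homotopic-quasiplanar} as closely as possible, but replacing the ``at most two distinct vertices per cell'' input (no longer available in the simple case) with a weaker accounting of the contribution of \T-cells, and by using the stronger lower bound on $\|c_0\|$ afforded by simplicity: since $\Gamma$ is simple and filled, the link $c_0$ of a vertex $v$ has no repeated vertices on its boundary, and each such vertex is a $G_p$-neighbor of $v$, so between any two consecutive vertex-incidences on $\partial c_0$ there must be at least one edge-segment-incidence, giving again $\|c_0\| \ge 2 + \deg_{G_p}(v)$ when $\deg_{G_p}(v)\ge 1$ (and $\|c_0\|\ge \deg_{G_p}(v)$ in general, with the minimum-degree-$4$ hypothesis ensuring $c_0$ is well-behaved). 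Via \cref{lem:remove-vertex} this yields $|\calC(v)| + \sum_{c\in\calC(v)}(\|c\|-5) \ge 2 + \deg_{G_p}(v)$ for each $v$, exactly as in \eqref{eq:c_0-at-least-2}.

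Next I would sum this over all $v\in V$. Here is where the two cases diverge: a cell $c$ is now counted once in $\sum_{v}|\calC(v)|$ for each of its vertex-incidences, and in a simple drawing a \A-cell contributes $1$ such incidence, a \T-cell contributes $3$, and a \B-cell, \D-cell, or \E-cell contributes $2$, while larger cells contribute at most $\|c\|-4$ vertex-incidences (and in fact we want to bound everything in terms of the ``surplus'' $\sum_{c\in\calC_{\ge5}}(\|c\|-5)$). The key bookkeeping identity is therefore something like
\[
    \sum_{v\in V}|\calC(v)| \;=\; \sum_{c\in\calC} (\text{\# vertex-incidences of }c) \;\le\; 2|\calC| \;+\; \#\text{\T-cells} \;-\; \#\text{\A-cells} \;+\; \smashoperator{\sum_{c\in\calC_{\ge 5}}}\bigl(\|c\|-6\bigr)^{+},
\]
and then one feeds in the bound on the number of \T-cells. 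The natural way to control \T-cells is that each \T-cell, being a triangular cell bounded by three uncrossed edges (it is filled) or a mix, can be charged to planar edges or to the surplus; but more robustly, one observes $\#\text{\T-cells}$ is governed by the planar part, e.g.\ each \T-cell has three incident outer edge-segments of $E_p$ and hence $3\cdot\#\text{\T-cells}\le 2|E_p|$ — though to land on the clean $-3.5|V|$ one likely wants to instead absorb the \T-cell count directly. I would combine the summed inequality with $\sum_{v}\deg_{G_p}(v)=2|E_p|$ and $\sum_v\deg(v)=2|E|$, rearrange, and use $|E_p| = |E|-|E_x|$ to isolate $\#\text{\A-cells} - |E_x|$, aiming to produce exactly $|E| + 2\sum_{c\in\calC_{\ge5}}(\|c\|-5) - 3.5|V|$.

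The main obstacle I anticipate is pinning down the constant $3.5$: the coefficient $3.5 = 2 + 1.5$ strongly suggests that besides the ``$+2$'' per vertex from \eqref{eq:c_0-at-least-2} there is an extra half-vertex's worth of slack coming from a parity or degree argument — most plausibly from the minimum-degree-$\ge 4$ hypothesis interacting with the \T-cell count, since $\deg(v)\ge 4$ for all $v$ forces $2|E|\ge 4|V|$ and the difference between this and the $2|V|$ appearing naively must be spent carefully. Getting the \T-cell contribution to come out as exactly $+1.5|V|$ (rather than something depending on $|E_p|$) will require showing that in a filled simple quasiplanar drawing the \T-cells are ``few'' in a per-vertex sense, likely because each vertex of degree $\ge 4$ is incident to at most ``half a \T-cell's worth'' on average, or by a discharging-style local argument around each \T-cell using the $e_i$ construction from \cref{lem:no-T-cells}. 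I would first try to push every \T-cell contribution into the $\sum_{c\in\calC_{\ge5}}(\|c\|-5)$ term (treating a \T-cell's third vertex as ``borrowing'' from a neighboring large cell) and, failing that, bound $\#\text{\T-cells}$ directly against $|V|$; one of these should close the gap to the stated inequality.
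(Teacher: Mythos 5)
Your overall framework --- links, \cref{lem:remove-vertex}, summing a per-vertex lower bound on $\|c_0\|$ over all vertices, and accounting for how often each cell is counted via its vertex-incidences --- is exactly the paper's skeleton. But the proof does not close, for two concrete reasons. First, your per-vertex input $\|c_0\| \geq 2 + \deg_{G_p}(v)$ is the non-homotopic bound and is too weak here: summing it can only produce a $-2|V|$ term (plus a leftover $+\#\text{\T-cells}$ from the three vertex-incidences of each \T-cell), and no amount of rearranging recovers $-3.5|V|$ from that starting point --- you would need something like $\#\text{\T-cells} \leq |E_p| - 1.5|V|$, which is generally false. Simplicity actually buys the stronger bound $\|c_0\| \geq 3 + \deg_{G_p}(v)$, since $\partial c_0$ must carry at least three edge-segment-incidences (two would form a lens).

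Second, and more importantly, the proof needs a \emph{second}, independent per-vertex inequality, namely $\|c_0\| \geq 4 + \#\{\text{\T-cells at } v\}$, and this is precisely where the minimum-degree-$4$ hypothesis and quasiplanarity enter (the delicate case is $\deg_{G_p}(v) = \#\{\text{\T-cells at } v\} > 0$, where $v$ is incident only to \T-cells and one uses $\deg(v) \geq 4$ directly; the case $\deg_{G_p}(v)=0$ uses that $c_0$ cannot be a \C-cell). Summing the first inequality yields a bound with $-3|V|$ and a $+2|E_x| + 3\cdot\#\text{\T-cells}$ on the right; summing the second yields a bound with $-4|V|$ and $+2|E|$ in which the \T-cell terms cancel. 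The coefficient $3.5$ is simply the average of $3$ and $4$ obtained by adding these two global bounds, and the residual $3\cdot\#\text{\T-cells}$ is absorbed because each \T-cell satisfies $\|c\|-5 = 1$, so $3\cdot\#\text{\T-cells} \leq 4\sum_{c\in\calT}(\|c\|-5)$ fits under the $4\sum_{c\in\calC_{\geq 5}}(\|c\|-5)$ budget. Your guess that $3.5 = 2 + 1.5$ with the extra $1.5|V|$ coming from a parity or discharging argument around \T-cells points in the wrong direction; neither of your two proposed fallbacks supplies the missing inequality. (A minor additional slip: a \B-cell has zero vertex-incidences, not two, though this does not affect the main gap.)
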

\begin{proof}
    For any fixed vertex $v$, let $\calC(v)$ be the set of all cells in $\Gamma$ incident to $v$, and $c_0$ be the link of $v$.
    By \cref{lem:remove-vertex} we have
    \begin{equation}
        |\calC(v)| + \sum_{v \in \calC(v)}(\|c\|-5) = \|c_0\|.\label{eq:c_0-size}
    \end{equation}
    As $\Gamma$ is filled, each vertex in $\partial c_0$ is a neighbor of $v$ in the planar subgraph $G_p = (V,E_p)$ of $G$.

    \begin{claim*}
        We have $\|c_0\| \geq 3 + \deg_{G_p}(v)$, as well as $\|c_0\| \geq 4 + \#\{\text{\T-cells at $v$}\}$.
    \end{claim*}
    \begin{claimproof}
        As $\Gamma$ is simple, $\partial c_0$ has at least three edge-segment-incidences and, hence, $\|c_0\| \geq 3 + \deg_{G_p}(v)$.

        If $\deg_{G_p}(v) > \#\{\text{\T-cells at $v$}\}$, then also the second inequality follows.
        
        If $\deg_{G_p}(v) = 0$, then $c_0$ has no incident vertices, and in particular $\#\{\text{\T-cells at $v$}\} = 0$.
        As $\Gamma$ is quasiplanar, $c_0$ is no \C-cell and hence $\|c_0\| \geq 4 = 4 + \#\{\text{\T-cells at $v$}\}$, as desired.
        
        It remains to consider the case that $\deg_{G_p}(v) = \#\{\text{\T-cells at $v$}\} > 0$, which implies that $v$ is incident only to \T-cells.
        But in this case, using $\deg(v) \geq 4$, we can also conclude that $\|c_0\| = \deg(v) + \#\{\text{\T-cells at $v$}\} \geq 4 + \#\{\text{\T-cells at $v$}\}$.
    \end{claimproof}

    Now recall that every non-\T-cell is incident to at most two vertices, since $\Gamma$ is filled.
    From \cref{obs:quasiplanar-no-3-cells} we have $|\calC_3|=0$, i.e., $\calC_{\geq 5} = \calC - \calC_4$.
    Moreover, $\calC_4$ is the set of all \A-cells and \B-cells, where the latter are not incident to any vertex in $G$.
    Finally, let us denote by $\calT$ the set of all \T-cells in $\Gamma$.
    Then, putting the lower bounds on $\|c_0\|$ from the above Claim into \eqref{eq:c_0-size} and summing over all vertices $v$, we obtain
    \begin{align*}
        \sum_{v \in V} |\calC(v)| - \#\text{\A-cells} + 2 \cdot \smashoperator{\sum_{c \in \calC - (\calC_4 \cup \calT)}}(\|c\|-5) + 3\cdot \#\text{\T-cells} &\geq 3|V| + 2|E_p| \quad \text{and}\\
        \sum_{v \in V} |\calC(v)| - \#\text{\A-cells} + 2 \cdot \smashoperator{\sum_{c \in \calC - (\calC_4 \cup \calT)}}(\|c\|-5) + 3\cdot \#\text{\T-cells} &\geq 4|V| + 3\cdot \#\text{\T-cells}.
    \end{align*}
    Together with $\sum_{v \in V}|\calC(v)| \le \sum_{v\in V}\deg(v) = 2|E| = 2|E_p| + 2|E_x|$ this gives
    \begin{align}
        \#\text{\A-cells} &\leq 2|E_x| + 2 \cdot \smashoperator{\sum_{c \in \calC - (\calC_4 \cup \calT)}}(\|c\|-5) + 3\cdot \#\text{\T-cells} - 3|V| \quad \text{and}\label{eq:quasiplanar-A-cell-count-1}\\
        \#\text{\A-cells} &\leq 2|E| + 2 \cdot \smashoperator{\sum_{c \in \calC - (\calC_4 \cup \calT)}}(\|c\|-5) - 4|V|.\label{eq:quasiplanar-A-cell-count-2}
    \end{align}
    Adding \eqref{eq:quasiplanar-A-cell-count-1} and \eqref{eq:quasiplanar-A-cell-count-2} gives the desired:
    \begin{align*}
        2\cdot \#\text{\A-cells} &\leq 2|E|+2|E_x| + 4 \cdot \smashoperator{\sum_{c \in \calC - (\calC_4 \cup \calT)}}(\|c\|-5)+3\cdot \#\text{\T-cells} - 7|V|\\
        &\leq 2|E|+2|E_x| + 4 \cdot \smashoperator{\sum_{c \in \calC - \calC_4}}(\|c\|-5) - 7|V|\qedhere
    \end{align*}
\end{proof}

\begin{theorem}\label{thm:simple-quasiplanar}
    For every $n \geq 4$, every simple $n$-vertex quasiplanar graph $G$ has at most $6.5n - 20$ edges.
\end{theorem}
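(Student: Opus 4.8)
The plan is to induct on $n$ and reduce to the situation in which $G$ is $3$-connected and has minimum degree at least $4$; in that regime one may invoke \cref{lem:simple-quasiplanar-filled} and \cref{lem:simple-quasiplanar}, after which the Density Formula with $t=5$ (\cref{cor:density-formula-5}) closes everything in essentially a one-line calculation.

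For the base of the induction, a simple graph on $n$ vertices has at most $\binom{n}{2}$ edges, and $\binom{n}{2}\le 6.5n-20$ holds exactly for $4\le n\le 10$; so the claim is immediate for $n\le 10$. Assume now $n\ge 11$. Since passing to a supergraph on the same vertex set that is still simple quasiplanar only makes the bound harder to satisfy, we may assume $G$ is connected (otherwise decompose into components and apply the induction hypothesis to those on at least $4$ vertices, bounding smaller ones by $3$ edges). If $G$ has a vertex $v$ with $\deg(v)\le 6$, then $G-v$ is again simple quasiplanar via the induced subdrawing, so by induction $|E(G-v)|\le 6.5(n-1)-20$ and hence $|E(G)|\le 6+6.5(n-1)-20\le 6.5n-20$. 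If $G$ is connected but not $3$-connected, pick a separator $S$ with $|S|\le 2$, split $G$ into $G_1,G_2$ meeting exactly in $S$, and in each part add the (at most one) edge inside $S$ if it is missing; this can be routed crossing-free through the face vacated by the other part, so each part stays simple quasiplanar. Applying the induction hypothesis to the parts on at least $4$ vertices (and bounding any part on at most $3$ vertices by $3$ edges outright), together with $|V(G_1)|+|V(G_2)|\le n+2$, gives $|E(G)|\le 6.5n-27<6.5n-20$; the few boundary cases with a tiny part are dispatched by the same bookkeeping.

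It remains to handle a $3$-connected graph $G$ of minimum degree at least $7$. By \cref{lem:simple-quasiplanar-filled} there is a supergraph $G'\supseteq G$ on the same vertex set admitting a filled simple quasiplanar drawing $\Gamma$; since $G\subseteq G'$, the graph $G'$ is still connected and of minimum degree at least $4$, so \cref{lem:simple-quasiplanar} applies to $\Gamma$ and yields $\#\text{\A-cells}-|E_x|\le |E(G')|+2\sum_{c\in\calC_{\geq 5}}(\|c\|-5)-3.5n$. On the other hand, \cref{cor:density-formula-5} together with $|\calC_3|=0$ (\cref{obs:quasiplanar-no-3-cells}) gives $|E(G')|=5n-10+|\calC_4|-|\calX|-\sum_{c\in\calC_{\geq 5}}(\|c\|-5)$; bounding $|\calC_4|\le\tfrac12\bigl(|\calS_{\rm in}|+\#\text{\A-cells}\bigr)$ via \cref{lem:B-general} and substituting $|\calS_{\rm in}|=2|\calX|-|E_x|$ from \cref{obs:inner-segments}, the crossing count cancels and we are left with $|E(G')|\le 5n-10+\tfrac12(\#\text{\A-cells}-|E_x|)-\sum_{c\in\calC_{\geq 5}}(\|c\|-5)$. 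Plugging in the bound on $\#\text{\A-cells}-|E_x|$ makes the sums over $\calC_{\geq 5}$ cancel as well, leaving $|E(G')|\le 3.25n-10+\tfrac12|E(G')|$, i.e.\ $|E(G')|\le 6.5n-20$, and therefore $|E(G)|\le|E(G')|\le 6.5n-20$.

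The substantial work has already been done in \cref{lem:simple-quasiplanar} (the minimum-degree-$4$ hypothesis and the delicate bookkeeping around \T-cells via the link of a vertex), which we take for granted here. At the level of the theorem the only genuinely subtle point is the reduction to a $3$-connected drawing of minimum degree at least $4$: one must make sure the virtual edges inside a small separator can always be routed without crossings so that the pieces remain simple quasiplanar, and one must dispatch the separator pieces that are too small to feed into the induction hypothesis. Everything after that is a mechanical substitution into the Density Formula.
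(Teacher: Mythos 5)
Your proof follows essentially the same route as the paper's: an induction that reduces to the $3$-connected, bounded-minimum-degree case, followed by \cref{lem:simple-quasiplanar-filled}, \cref{lem:simple-quasiplanar}, \cref{lem:B-general}, \cref{obs:inner-segments}, and the Density Formula with $t=5$; the final calculation is identical to the paper's. The only questionable step is in the separator case, where you add a missing edge inside $S$ to each part and claim it can be routed crossing-free ``through the face vacated by the other part'' --- after deleting one part, the two separator vertices need not lie on a common cell of the remaining subdrawing, so this routing is not justified. Fortunately the step is superfluous: an edge of $G$ inside $S$ already lies in both induced subdrawings $G[V_1\cup S]$ and $G[V_2\cup S]$, and a missing edge need not be added at all, so the count $|E(G)|\le|E(G_1)|+|E(G_2)|\le 6.5(n+2)-40$ goes through with the plain induced parts, exactly as in the paper. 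Note also that once the minimum degree is at least $7$, every separator part contains a vertex whose whole neighborhood lies in that part, so no part has fewer than $8$ vertices and the ``tiny part'' boundary cases you defer never actually arise.
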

\begin{proof}
    We do induction on the number $n$ of vertices in $G$.
    In the base case, $n = 4$, since $G$ is simple, we have at most $\binom{4}{2} = 6$ edges, and indeed $6.5 \cdot 4 - 20 = 6$.

    So assume that $G = (V,E)$ with $|V| = n \geq 5$.
    If $G$ contains a vertex $v$ of degree less than $4$, by induction on $G - v$ we have $|E| \leq \deg(v) + 6.5(n-1) - 20 < 4 + 6.5n - 6.5 -20 < 6.5n - 20$.
    So assume that the minimum degree of~$G$ is $4$.
    If $G$ contains two vertices $u,v$ whose removal separates $G$ into two subgraphs with vertex sets $V_1 \dot\cup V_2$, then by induction on $G-V_1$ and $G-V_2$ we have $|E| \leq 6.5|V_1 \cup \{u,v\}| - 20 + 6.5|V_2 \cup \{u,v\}| - 20 < 6.5n-20$.
    So assume that $G$ is $3$-connected.
    Let $\Gamma$ be a simple quasiplanar drawing of $G = (V,E)$.
    By \cref{lem:simple-quasiplanar-filled} we may assume that $\Gamma$ is filled and hence \cref{lem:simple-quasiplanar} applies.
    Recalling $|\calC_3|=0$, the Density Formula with $t=5$ (\cref{cor:density-formula-5}) gives
    \begin{align*}
        |E|&=5|V|-10 + |\calC_4| - |\calX| -\sum_{c \in \calC_{\geq 5}} (\|c\|-5)\\
        &\leq 5|V|-10+\frac12\left(|\calS_{\rm in}|+\#\text{\A-cells}\right)-|\calX|- \smashoperator{\sum_{c \in \calC_{\geq 5}}} (\|c\|-5) \hspace{4.7em} \text{(by \cref{lem:B-general})}\\
        &\leq 5|V|-10+\frac12\left(\#\text{\A-cells}-|E_x|\right) - \smashoperator{\sum_{c \in \calC_{\geq 5}}} (\|c\|-5) \hspace{5.4em} \text{(by \cref{obs:inner-segments})}\\
        &\leq 5|V|-10+\frac12\left(|E|-3.5|V| +2\cdot \smashoperator{\sum_{c \in \calC_{\geq 5}}} (\|c\|-5)\right) - \smashoperator{\sum_{c \in \calC_{\geq 5}}} (\|c\|-5)\hspace{1.4em} \text{(by \cref{lem:simple-quasiplanar})}\\
        &= 0.5|E| + 3.25|V| - 10,
    \end{align*}
    which implies the desired $|E| \leq 6.5|V|-20$.
\end{proof}

\subsection{\texorpdfstring{$\boldsymbol{k^+}$}{k+}-Real Face Graphs}
\label{sec:k-real-face-graphs}

For an integer $k \geq 1$, a drawing $\Gamma$ of some graph $G$ on the sphere $\mathbb{S}^2$ is \emph{$k^+$-real face} if every cell of $\Gamma$ has at least $k$ vertex-incidences (counting vertices with repetitions), and in this case $G$ is called a \emph{$k^+$-real face graph}.
The $k^+$-real face drawings were recently introduced in~\cite{DBLP:conf/wg/BinucciBDHKLMT23}.
For every $n \geq 3$, simple $n$-vertex $1^+$-real face graphs have at most $5n-10$ edges, simple $n$-vertex $2^+$-real face graphs have at most $4n-8$ edges, and simple $n$-vertex $k^+$-real face graphs for $k \geq 3$ have at most $\frac{k}{(k-2)}(n-2)$ edges
and 
all of these bounds are known to be best-possible~\cite{DBLP:conf/wg/BinucciBDHKLMT23}.

In this section, we reprove all these upper bound results for the connected case by immediate applications of the Density Formula.
In fact, we slightly generalize the results from simple drawings~\cite{DBLP:conf/wg/BinucciBDHKLMT23} to non-homotopic drawings for $k \in \{1,2\}$
and to general (not necessarily non-homotopic) drawings for $k \geq 3$.

\begin{theorem}\label{thm:2-vertex-per-cell}
    For every $n \geq 3$, every connected $n$-vertex non-homotopic $2^+$-real face graph $G$ has at most $4n-8$ edges.
\end{theorem}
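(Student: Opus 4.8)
The plan is to invoke the Density Formula with $t=4$ in the form of \cref{cor:density-formula-4}, so that the only work is to control the cells of small size in a non-homotopic $2^+$-real face drawing $\Gamma$ of $G$.

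First I would record a basic size identity. Since $G$ is connected with $|V|\ge 3$, the drawing $\Gamma$ is connected and has at least one edge, so the boundary of every cell $c$ is a single closed walk alternating between $V\cup\calX$ and $\calS$. In such a walk the two classes occur equally often, so the number of edge-segment-incidences of $c$ equals the number of vertex-incidences plus the number of crossing-incidences; hence $\|c\| = 2\cdot(\#\text{vertex-incidences of }c) + (\#\text{crossing-incidences of }c)$. I would use this twice. A cell of size at most $4$ therefore has at most two vertex-incidences, with equality only if it has size exactly $4$ and no crossing-incidence; in that case the boundary walk reads $v, s_1, w, s_2$, which forces $s_1,s_2$ to be two uncrossed parallel edges between distinct vertices $v,w$ (loops are excluded, and $s_1=s_2$ is impossible since then a third vertex would have to lie in the cell). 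These two parallel edges bound a lens with empty interior, contradicting that $\Gamma$ is non-homotopic. So in a $2^+$-real face drawing there are no cells of size $3$ or $4$, i.e.\ $\calC_3=\calC_4=\emptyset$. (Alternatively this follows directly from \cref{obs:types-of-cells}, as \C-, \A-, and \B-cells have at most one vertex-incidence.)

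Second, a cell $c$ of size exactly $5$ in a $2^+$-real face drawing has exactly two vertex-incidences, hence by the identity exactly one crossing-incidence. Every crossing of $\Gamma$ occupies four sectors and thus accounts for exactly four crossing-incidences in total (counted with multiplicity, which only helps), so summing over all size-$5$ cells gives $|\calC_5|\le 4|\calX|$. Plugging $\calC_3=\calC_4=\emptyset$ and $|\calC_5|\le 4|\calX|$ into \cref{cor:density-formula-4} yields
\[
    |E| \;\le\; 4|V|-8 + \tfrac74|\calC_3| + |\calC_4| + \tfrac14|\calC_5| - |\calX| \;=\; 4|V|-8 + \tfrac14|\calC_5| - |\calX| \;\le\; 4|V|-8,
\]
which is the claim.

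Once the cell bookkeeping is set up this is essentially a one-line calculation, so the main obstacle is the bookkeeping itself: ruling out a size-$4$ cell bounded by two uncrossed parallel edges — this is exactly the place where non-homotopicity is used and must be argued carefully — and making sure that degenerate size-$5$ cells (where a single crossing may appear more than once on the boundary) cause no problem in the charging $|\calC_5|\le 4|\calX|$, which they do not.
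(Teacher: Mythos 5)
Your proof is correct and follows essentially the same route as the paper, which also excludes all cells of size at most $4$, charges each size-$5$ cell to its unique crossing-incidence to get $|\calC_5|\le 4|\calX|$, and concludes via \cref{cor:density-formula-4} (the paper packages this as \cref{lem:4n-8-more-general}). The only difference is presentational: you derive the cell classification from the identity $\|c\|=2\cdot(\#\text{vertex-incidences})+(\#\text{crossing-incidences})$ and rule out the degenerate two-parallel-edge cell by hand, where the paper cites \cref{obs:types-of-cells}; both are fine.
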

\begin{proof}
    In a $2^+$-real face drawing $\Gamma$ of $G$ there clearly are no \C-cells, no \B-cells, no \F-cells, no \A-cells, and no \E-cells.
    As $G$ is connected, so is $\Gamma$, and the claim follows immediately from \cref{lem:4n-8-more-general}.
\end{proof}

\begin{theorem}\label{thm:1-vertex-per-cell}
    For every $n \geq 3$, every connected $n$-vertex non-homotopic $1^+$-real face graph $G$ has at most $5n-10$ edges.
\end{theorem}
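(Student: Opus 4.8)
The plan is to run the Density Formula with $t=5$ in the same spirit as \cref{thm:2-vertex-per-cell}, but now keeping track of the one ``small'' cell type that is \emph{not} excluded by the weaker $1^+$-real face condition. Let $\Gamma$ be a non-homotopic $1^+$-real face drawing of $G$. Since $G$ is connected on $n \geq 3$ vertices, $\Gamma$ is connected and has $|E| \geq 1$, so \cref{cor:density-formula-5} applies. The structural observation is that the defining condition ``every cell has at least one vertex-incidence'' forbids exactly those cell types whose boundary carries no vertex at all: there are no \C-cells and no \B-cells in $\Gamma$ (note that a \B-cell, being bounded by four edge-segments and four crossings, has no vertex-incidence even when it is degenerate). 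By \cref{obs:types-of-cells} this gives $|\calC_3| = 0$ and $\calC_4$ is precisely the set of \A-cells, so \cref{cor:density-formula-5} reads
\[
    |E| = 5|V|-10 + \#\text{\A-cells} - |\calX| - \sum_{c \in \calC_{\geq 5}}(\|c\|-5).
\]
Finally I would invoke \cref{lem:A-cells-vs-X}, which for the non-homotopic drawing $\Gamma$ yields $\#\text{\A-cells} \leq |\calX|$, and drop the non-negative term $\sum_{c \in \calC_{\geq 5}}(\|c\|-5)$, obtaining the desired $|E| \leq 5|V|-10$.

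There is essentially no obstacle here beyond bookkeeping: all the genuine work has already been absorbed into \cref{lem:A-cells-vs-X} (the injection of \A-cells into crossings) and into the Density Formula itself. The only point requiring a moment's care is verifying that \B-cells truly carry no vertex-incidence, so that they are indeed ruled out by the $1^+$-real face property; this is immediate from their definition. It is also worth recording that the same computation in fact proves the slightly sharper statement $|E| \leq 5|V|-10 - \sum_{c \in \calC_{\geq 5}}(\|c\|-5)$, mirroring \cref{thm:fan-crossing}, which could be convenient if one later wishes to combine this bound with further cell-counting arguments.
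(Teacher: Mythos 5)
Your proposal is correct and coincides with the paper's own proof: both exclude \C-cells and \B-cells via the $1^+$-real face condition, apply \cref{cor:density-formula-5}, and finish with \cref{lem:A-cells-vs-X}. The extra remark that the argument yields the sharper bound $|E| \leq 5|V|-10 - \sum_{c \in \calC_{\geq 5}}(\|c\|-5)$ is a valid, if minor, addition.
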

\begin{proof}
    Let $\Gamma$ be a $1^+$-real face drawing of $G = (V,E)$.
    As $G$ is connected, so is $\Gamma$.
    As each cell $c$ in $\Gamma$ has at least one incident vertex, there are no \B-cells and no \C-cells, and hence $|\calC_3| = 0$ and $|\calC_4| = \#$\A-cells (cf. \cref{obs:types-of-cells}).
    Therefore, the Density Formula with $t = 5$ (\cref{cor:density-formula-5}) immediately implies
    \[
        |E| \leq 5|V|-10 + 2|\calC_3| + |\calC_4| - |\calX| = 5|V|-10 +\#\text{\A-cells} - |\calX| \leq 5|V|-10,
    \] 
    where the last inequality uses \cref{lem:A-cells-vs-X}.
\end{proof}

\begin{theorem}\label{thm:k-vertex-per-cell}
    For every $k\geq 3, n\geq 3$, every connected $n$-vertex (not necessarily non-homotopic) $k^+$-real face graph $G$ has at most $\frac{k}{k-2}(n-2)$ edges.
\end{theorem}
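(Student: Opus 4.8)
The plan is to apply the Density Formula (\cref{lem:density-formula}) with the tailored value $t = \frac{k}{k-2}$, which is well-defined and satisfies $t > 1$ since $k \geq 3$. First I would fix a $k^+$-real face drawing $\Gamma$ of $G$; since $G$ is connected, so is $\Gamma$, and since $G$ is connected on $n \geq 3$ vertices we have $|E| \geq n-1 \geq 1$, so the Density Formula is applicable. The whole argument is then an immediate consequence of a single elementary size bound on cells, so I expect no real obstacle here beyond pinning down the correct $t$.

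The key observation is that in a $k^+$-real face drawing every cell $c$ satisfies $\|c\| \geq 2k$. Indeed, each connected part of the boundary $\partial c$ is a cyclic sequence alternating between $V \cup \calX$ and $\calS$, so the number of edge-segment-incidences of $c$ equals the number of its vertex-incidences plus the number of its crossing-incidences. Hence $\|c\|$ equals twice the number of vertex-incidences of $c$ plus its number of crossing-incidences, which is at least twice its number of vertex-incidences, which in turn is at least $2k$ by the defining property of $k^+$-real face drawings.

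With $t = \frac{k}{k-2}$ one checks $\frac{t-1}{4} = \frac{1}{2(k-2)}$, so the summand in the Density Formula becomes $\frac{t-1}{4}\|c\| - t = \frac{1}{2(k-2)}\left(\|c\| - 2k\right)$, which is non-negative for every cell by the observation above (recall $k-2 > 0$). Note the choice of $t$ is exactly the one for which these per-cell terms vanish precisely when a cell attains the minimum possible size $2k$. Dropping this non-negative sum together with the non-negative term $|\calX|$ then yields
\[
    |E| = \frac{k}{k-2}(|V|-2) - \frac{1}{2(k-2)}\sum_{c \in \calC}\left(\|c\|-2k\right) - |\calX| \leq \frac{k}{k-2}(|V|-2),
\]
which is the desired bound. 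The only subtlety is that this argument does not need $\Gamma$ to be non-homotopic, matching the stronger hypothesis of the statement.
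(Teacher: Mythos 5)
Your proposal is correct and takes essentially the same route as the paper: both apply the Density Formula with $t = \frac{k}{k-2}$ (the paper writes this as $\frac{2k}{2k-4}$), observe that every cell satisfies $\|c\| \geq 2k$, and drop the resulting non-negative sum together with $|\calX|$. Your write-up merely spells out in more detail why $\|c\| \geq 2k$ follows from the alternating structure of $\partial c$ and why the formula is applicable, which the paper leaves implicit.
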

\begin{proof}
    Let $\Gamma$ be a $k^+$-real face drawing of $G = (V,E)$.
    As $G$ is connected, so is $\Gamma$.
    As each cell $c$ in $\Gamma$ has at least $k$ vertex-incidences, we have $\|c\| \geq 2k$.
    Therefore, using $k \geq 3$, the Density Formula (\cref{lem:density-formula}) with $t = \frac{2k}{2k-4}$ and, thus, $\frac{t-1}{4} = \frac{1}{2k-4}$ immediately gives
    \[
        |E|= \frac{2k}{2k-4}(|V|-2)- \sum_{c \in \calC} \left(\frac{\|c\|}{2k-4}-\frac{2k}{2k-4}\right) - |\calX| \leq \frac{k}{k-2}(|V|-2).\qedhere
    \]
\end{proof}

\subsection{1-Planar and 2-Planar Graphs}
\label{sec:k-planar}

For an integer $k \geq 0$, a drawing $\Gamma$ of some graph $G$ on the sphere $\mathbb{S}^2$ is \emph{$k$-planar} if every edge of $G$ has at most $k$ crossings in $\Gamma$, and in this case $G$ is called a \emph{$k$-planar graph}.
The $k$-planar graphs were introduced by Pach and T\'oth~\cite{PT97-k-planar}, while $1$-planar graphs date back to Ringel~\cite{Rin65-1-planar}.
It is known that for every $n\ge 3$ simple $n$-vertex $0$-planar graphs (these are just planar graphs) have at most $3n-6$ edges~[folklore], simple $n$-vertex $1$-planar graphs have at most $4n-8$ edges~\cite{PT97-k-planar}, and simple $n$-vertex $2$-planar graphs have at most $5n-10$ edges~\cite{PT97-k-planar}.
All of these bounds are known to be best-possible~\cite{PT97-k-planar}.
For arbitrary $k \geq 2$, the best known upper bound is $3.81\sqrt{k}n$~\cite{Ack19-k-planar}, which is tight up to the multiplicative constant.

In this section, we reprove the upper bounds for connected $1$-planar and $2$-planar graphs by immediate applications of the Density Formula.
In fact, we slightly generalize the results from simple graphs as in the original paper~\cite{PT97-k-planar} to connected non-homotopic graphs.

\begin{theorem}\label{thm:1-planar}
    For every $n \geq 3$, every connected non-homotopic $n$-vertex $1$-planar graph $G$ has at most $4n-8$ edges.
\end{theorem}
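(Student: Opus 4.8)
The plan is to reduce this to \cref{lem:4n-8-more-general}. Let $\Gamma$ be a $1$-planar drawing of $G = (V,E)$; since $G$ is connected (and non-homotopic) and $|V| \ge 3$, the same holds for $\Gamma$. The key point is that in a $1$-planar drawing every edge has at most one crossing, hence is split into at most two edge-segments, each of them incident to an endpoint of the edge. Therefore $\Gamma$ has no inner edge-segments, i.e., $\calS_{\rm in} = \emptyset$; equivalently, every crossed edge has exactly one crossing, so $|E_x| = 2|\calX|$ and $|\calS_{\rm in}| = 2|\calX| - |E_x| = 0$ by \cref{obs:inner-segments}.

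Next I would check that $\Gamma$ contains none of the cell types forbidden in \cref{lem:4n-8-more-general}. For \A-, \B-, and \C-cells this is immediate from \cref{lem:B-general}, which forces $\#\text{\A-cells} + 2\cdot\#\text{\B-cells} + 3\cdot\#\text{\C-cells} \le |\calS_{\rm in}| = 0$. For \E- and \F-cells, note that (since $\Gamma$ is connected) the boundary of a cell is a single cyclic sequence alternating between vertex-/crossing-incidences and edge-segment-incidences; if two consecutive incidences along this sequence were both crossings, the edge-segment between them would have two crossings as its endpoints and thus be inner. Hence, in the absence of inner edge-segments, no two crossing-incidences are consecutive, and a short count shows that a cell of size $5$ can then only have three edge-segment-incidences, two vertex-incidences, and one crossing-incidence --- that is, it must be a \D-cell (cf.\ \cref{obs:types-of-cells} and \cref{fig:example-cells}). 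In particular, $\Gamma$ has no \E-cells and no \F-cells.

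Having ruled out \C-, \B-, \F-, \A-, and \E-cells in the connected non-homotopic drawing $\Gamma$ with $|V|\ge 3$, \cref{lem:4n-8-more-general} applies directly and gives $|E| \le 4|V| - 8$. (Concretely, one could also just feed $|\calC_3| = |\calC_4| = 0$ and $|\calC_5| = \#\text{\D-cells} \le 4|\calX|$ into \cref{cor:density-formula-4}, using that each \D-cell has exactly one incident crossing and each crossing at most four incident \D-cells.) I expect the only point requiring genuine care to be the elimination of \E- and \F-cells, in particular the treatment of their degenerate variants; the cyclic-sequence count above handles this, but one should make sure it also covers boundary sequences in which a crossing or an edge-segment occurs more than once.
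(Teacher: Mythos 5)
Your proposal is correct and follows essentially the same route as the paper: the paper's proof simply observes that each \C-, \B-, \F-, \A-, and \E-cell requires an edge with at least two crossings (equivalently, an inner edge-segment), which is impossible in a $1$-planar drawing, and then invokes \cref{lem:4n-8-more-general}. Your more detailed derivation --- $|\calS_{\rm in}|=0$, then \cref{lem:B-general} for the size-$3$ and size-$4$ cells and the alternating-boundary count for the size-$5$ cells --- is a valid, if slightly longer, way of making the same point, and it does handle the degenerate variants correctly since it counts incidences rather than distinct objects.
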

\begin{proof}
    Any \C-cell, \B-cell, \F-cell, \A-cell, and \E-cell requires an edge with at least two crossings.
    Thus the theorem follows immediately from \cref{lem:4n-8-more-general}.
\end{proof}

\begin{theorem}\label{thm:2-planar}
    For every $n \geq 3$, every connected non-homotopic $n$-vertex $2$-planar graph $G$ has at most $5n-10$ edges.
\end{theorem}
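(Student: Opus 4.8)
The plan is to derive the bound as a one-line consequence of the Density Formula with $t=5$ (\cref{cor:density-formula-5}) together with the auxiliary \cref{lem:2-planar-cell-count}. First I would fix a connected non-homotopic $2$-planar drawing $\Gamma$ of $G=(V,E)$; since $G$ is connected, so is $\Gamma$, and since $n\geq 3$ we may invoke \cref{obs:types-of-cells}. Recall that \cref{cor:density-formula-5} states
\[
    |E| = 5|V|-10 + 2|\calC_3| + |\calC_4| - |\calX| - \smashoperator{\sum_{c \in \calC_{\geq 5}}}(\|c\|-5),
\]
so it suffices to show that $2|\calC_3| + |\calC_4| - |\calX| \leq 0$ (the final sum being manifestly non-negative).

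By \cref{obs:types-of-cells}, $\calC_3$ is the set of \C-cells and $\calC_4$ is the set of \A-cells and \B-cells, hence
\[
    2|\calC_3| + |\calC_4| = 2\cdot\#\text{\C-cells} + \#\text{\A-cells} + \#\text{\B-cells}.
\]
Now I would simply observe that, since $\#\text{\C-cells}\geq 0$ and $\#\text{\B-cells}\geq 0$, this quantity is dominated term by term by the left-hand side of \cref{lem:2-planar-cell-count}:
\[
    2\cdot\#\text{\C-cells} + \#\text{\A-cells} + \#\text{\B-cells} \;\leq\; 3\cdot\#\text{\C-cells} + 2\cdot\#\text{\B-cells} + \#\text{\A-cells} \;\leq\; |\calX|.
\]
Substituting back into the Density Formula then yields $|E| \leq 5|V|-10$, as desired.

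There is no genuine obstacle left in the theorem itself: all of the combinatorial work — the partition of the crossings into $\calX_1,\calX_2,\calX_3$, the observation that a crossing incident to a \C-cell is incident to nothing else ($3\cdot\#\text{\C-cells}=|\calX_1|$), that a \B-cell shares edge-segments with at most two \A-cells ($2\cdot\#\text{\B-cells}+|\calA|\leq|\calX_2|$), and the planarization of $\calX_1\cup\calX_2$ to apply \cref{lem:A-cells-vs-X} and get $|\calB|\leq|\calX_3|$ — is already encapsulated in \cref{lem:2-planar-cell-count}, which I am entitled to assume. Thus the only ``hard part'' has been pushed into that lemma, and the present theorem reduces to the elementary termwise comparison above.
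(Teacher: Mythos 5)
Your proposal is correct and matches the paper's own proof essentially verbatim: both apply \cref{cor:density-formula-5}, identify $\calC_3$ and $\calC_4$ via \cref{obs:types-of-cells}, and bound $2|\calC_3|+|\calC_4|$ termwise by the left-hand side of \cref{lem:2-planar-cell-count} to conclude $|E|\leq 5|V|-10$. No differences worth noting.
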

\begin{proof}
    Let $\Gamma$ be a non-homotopic $2$-planar drawing of $G = (V,E)$.
    As an edge with~$k$ crossings gives exactly~$k-1$ inner edge-segments, we have~$|\calS_{\rm in}| \leq |E_x|$ and hence with $|\calS_{\rm in}| = 2|\calX|-|E_x|$ (by \cref{obs:inner-segments}) it follows that $|\calX| \geq |\calS_{\rm in}|$.
    
    As $G$ is connected, so is $\Gamma$.
    By \cref{obs:types-of-cells}, $\calC_3$ is the set of all \C-cells and $\calC_4$ is union of all \B-cells and all \A-cells.
    Therefore, the Density Formula with $t = 5$ (\cref{cor:density-formula-5}) immediately implies
    \begin{multline*}
        |E| \leq 5|V|-10 + 2|\calC_3| + |\calC_4| - |\calX| \leq 5|V|-10 + 2|\calC_3| + |\calC_4| - |\calS_{\rm in}| \\
        \leq 5|V|-10 + 3\cdot\#\text{\C-cells} + 2\cdot\#\text{\B-cells} + \#\text{\A-cells} - |\calS_{\rm in}| \leq 5|V|-10,
    \end{multline*}
    where the last inequality uses \cref{lem:B-general}.
\end{proof}

\end{document}